\documentclass[11pt]{amsart}
\usepackage{newlfont,amsmath,mathrsfs,enumerate,amssymb,array,amscd,xcolor}

\usepackage{multirow}
\usepackage{amsmath, amssymb}
\usepackage{amscd}
\usepackage{amsthm}

\usepackage[normalem]{ulem}
\usepackage{mathabx}

\usepackage{parskip}

\newcommand{\Ext}{{\mathrm{Ext}}}

\newcommand{\rder}{{\mathrm{der}}}

\newcommand{\Ind}{{\mathrm{Ind}}}

\newcommand{\ind}{{\mathrm{ind}}}
\newcommand{\mini}{{\mathrm{mini}}}

\newcommand{\Spin}{{\mathrm{Spin}}}
\newcommand{\GSpin}{{\textnormal{GSpin}}}
\newcommand{\almostGSpin}{{}^*{\textnormal{GSpin}}}
\newcommand{\SL}{{\mathrm{SL}}}
\newcommand{\Sp}{{\mathrm{Sp}}}
\newcommand{\GL}{{\mathrm{GL}}}

\newcommand{\St}{\mathrm{St}}

\newcommand{\Hom}{{\mathrm{Hom}}}

\newtheorem{lemma}{Lemma}[section] 

\newtheorem{prop}[lemma]{Proposition}
\newtheorem{thm}[lemma]{Theorem}
\newtheorem{cor}[lemma]{Corollary}
\newtheorem{rmk}[lemma]{Remark}

\begin{document}


\title{$\SL_3$ dual pairs}

\author{Petar Baki\'c, Wee Teck Gan and Gordan Savin}

\address{P. B.: Department of Mathematics, University of Utah, Salt Lake City, UT}\email{bakic@math.utah.edu}
\address{W.T.G.:   Department of Mathematics, National University of Singapore, 10 Lower Kent Ridge Road
Singapore 119076} \email{matgwt@nus.edu.sg}
\address{G. S.: Department of Mathematics, University of Utah, Salt Lake City, UT}\email{savin@math.utah.edu}
 \subjclass[2020]{11F27, 22E50}

\begin{abstract}
We study the exceptional theta correspondence for dual pairs of the form $\SL_3\times G$ inside $p$-adic groups of type $E_n$, $n=6,7,8$. We establish Howe duality for lifts of all Whittaker-generic unitarizable representations of $\SL_3$. Our dual pair is in a see-saw relationship with a dual pair of the form $G_2 \times H$ such that $G/H$ is a rank-two symmetric space. Using the theta correspondence for $\SL_3 \times G$, we prove a multiplicity-one result for $H$-invariant functionals on representations of $G$.
\end{abstract}
\maketitle
{
\section*{Introduction} 

\renewcommand{\thelemma}{\Alph{lemma}}

Let $F$ be a $p$-adic field. A simple, split Lie algebra $\mathfrak e$ of type $E_n$, $n=6,7,8$,  has a $\mathbb Z/3\mathbb Z$-grading 
\[ 
\mathfrak e=   \mathfrak{sl}_3 \oplus \mathfrak g \oplus V_3\otimes J \oplus V_3^{\ast}\otimes J^{\ast}  
\] 
where $V_3$ is the standard representation of $\SL_3$ and, if $G$ is the split simply connected group corresponding to $\mathfrak g$, then $J$ is a representation 
of $G$. The pairs $(G,J)$ are tabulated here: 
\begin{equation}
\label{table}
\begin{array}{c||c|c|c}
n & 6 & 7 & 8\\
\hline
G & \SL_3\times\SL_3 & \SL_6 & E_6 \\
\hline 
J &  V_3\otimes V_3 & \wedge^2 V_6 & V_{27} \\
\hline 
H & \SL_3 & \Sp_6 & F_4
\end{array}
\end{equation}
Note that $G$ acts on $J$ with a small kernel $\mu_m$, $m=3,2,1\ (=9-n)$. 
The group $G$ preserves a symmetric trilinear form $(x,y,z)$ on $J$.  
In fact, $G/\mu_m$ is the connected component of the group of linear transformations of $J$ preserving the trilinear form. 
The Lie bracket on $\mathfrak e$ can be written down using the trilinear form \cite{Rum97}. 
If we pick an element $e\in J$ such that $(e,e,e)\neq 0$ (a generic element) then $J$ can be turned into a Freudenthal-Jordan algebra with the identity $e$ 
and the (cubic) norm $N_J$ such that $6N_J(x)=(x,x,x)$ and $e$ is the identity of $J$. The stabilizer of $e$ in $G$ is a group isomorphic to $H$,  listed in the table above. 
The stabilizers of two generic elements $e\in J $ and $f\in J$  are conjugate in $G$ if and only if $N_J(e)/N_J(f)$ is a cube; see Proposition \ref{P:conjugating_H}.  

The purpose of this paper is to study the theta lift from $\SL_3$ to $G$ arising from the minimal representation $\Pi$ of $E_n$, with an application to rank-two symmetric spaces 
$G/H$. More precisely, fix an embedding of $H$ into $G$. Since $H$ fixes a line in both $J$ and $J^{\ast}$, the centralizer of $H$ in $\mathfrak e$ is 
\[ 
\mathfrak e^H=   \mathfrak{sl}_3 \oplus  V_3 \oplus V_3^{\ast} \cong \mathfrak g_2,
\] 
which we recognize as the exceptional Lie algebra of type $G_2$. Thus we have a see-saw of dual pairs 
\begin{equation} \label{eq_seesaw1}
\arraycolsep=1.5pt\def\arraystretch{1.35}
\begin{array}{ccc}
G_2 & & G \\
| & \bigtimes & | \\
 \SL_3  & & H  
\end{array}
\end{equation} 
This see-saw plays a key role for both the theta correspondence and its relationship with the analysis of symmetric spaces $G/H$.  
More precisely, if $(U,\psi)$ is a Whittaker pair for $\SL_3$, then 
\[ 
\Pi_{U,\psi} \cong C_c(G/H). 
\] 
We note that conjugacy classes of Whittaker pairs for $\SL_3$ are also parameterized by classes of cubes in $F^{\times}$. Thus as $\psi$ varies, so does the group $H$ in $G$. 
With that caveat in mind, the above identity, via the standard see-saw argument, implies that, for any finite length representation $\pi$ of $G$, 
\[ 
\Hom_H(\pi,\mathbb C) \cong \Hom_{U}(\Theta(\pi), \psi), 
\] 
where $\Theta(\pi)=(\Pi\otimes\pi^{\vee})_G$ is the so-called big theta lift of $\pi$. In other words, understanding whether $\pi$ has an $H$-invariant functional reduces to computing $\Theta(\pi)_{U,\psi}$.  

We now highlight some of the results obtained in this paper.  The first one is Theorem \ref{thm_main1}: 
\begin{thm}  Let $\sigma$ be an irreducible unitarizable $\psi$-generic representation of $\SL_3$. Then $\Theta(\sigma)$ is an irreducible and unitarizable representation 
of $G$. If $\sigma'$ is another irreducible unitarizable $\psi'$-generic representation of $\SL_3$, then $\Theta(\sigma)\cong \Theta(\sigma')$ implies $\sigma \cong \sigma'$. 
\end{thm}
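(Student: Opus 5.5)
The plan is to exploit the see-saw (\ref{eq_seesaw1}) together with the identification $\Pi_{U,\psi}\cong C_c(G/H)$, combined with standard theta-correspondence tools: computing Jacquet modules of $\Pi$ along parabolics of $\SL_3$ and $G$, and arguing with unitarity. Throughout, $\Theta(\sigma)=(\Pi\otimes\sigma^\vee)_{\SL_3}$ is the big theta lift. We proceed in four steps.

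\textbf{Step 1 (finiteness and non-vanishing).} First we show that $\Theta(\sigma)$ is nonzero and of finite length. For non-vanishing, write $\sigma\hookrightarrow \Ind(\chi)$ or, for supercuspidal $\sigma$, use that $\sigma$ is a quotient of $\mathrm{ind}_U^{\SL_3}\psi$. Since $\Pi_{U,\psi}\cong C_c(G/H)$, the module $\Pi$ maps onto $\mathrm{ind}_U^{\SL_3}\psi\otimes(\text{something nonzero})$, and $\sigma$ occurs in $\Pi$. For finite length, compute $\Theta(\sigma)_N$ for the maximal parabolics $P=MN$ of $G$ using the filtration of $\Pi_N$ by $\SL_3\times M$-modules. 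The terms are built from the minimal representation of a smaller group of the same type, plus orbit terms coming from $\SL_3$-induced representations. This is a standard Kudla-type filtration and bounds $\Theta(\sigma)$ by finitely many exponents.

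\textbf{Step 2 (irreducibility via Whittaker/$H$-period multiplicity).} For any irreducible quotient $\pi$ of $\Theta(\sigma)$, $\sigma$ is a quotient of $\Theta(\pi)$, which is generic. The see-saw gives $\Hom_H(\pi,\mathbb C)\cong\Hom_U(\Theta(\pi),\psi)$. The key computation is $\Theta(\sigma)_{H}$, obtained from
\[
(\Pi\otimes\sigma^\vee)_{\SL_3\times H}
\]
via the $G_2$ side: the restriction of $\Pi$ to $G_2\times H$ has $H$-coinvariants equal to the minimal representation of $G_2$ (or a small module). Hence
\[
\Hom_H(\Theta(\sigma),\mathbb C)\cong \Hom_{\SL_3}(\Pi_{G_2,\min},\sigma),
\]
and the latter is $1$-dimensional for $\psi$-generic $\sigma$ by Whittaker-type uniqueness on the $G_2$ side. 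Unitarity then gives irreducibility. Since $\sigma$ is unitarizable, the standard argument (Li's/the $L^2$ mass formula, or the integral $\int_{\SL_3}\langle \Pi(g)v,v'\rangle\overline{\langle\sigma(g)w,w'\rangle}$ when it converges, otherwise an analysis of the Jacquet-module exponents from Step 1) produces a unitary quotient, and hence shows that $\Theta(\sigma)$ is semisimple. Every irreducible summand $\pi$ has $\Theta(\pi)\twoheadrightarrow\sigma$ with $\sigma$ generic, so $\pi$ carries an $H$-period. The dimension-one count then forces $\Theta(\sigma)$ to be irreducible. Unitarizability follows from the same construction.

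\textbf{Step 3 (injectivity).} Suppose $\Theta(\sigma)\cong\Theta(\sigma')=\pi$. Then both $\sigma$ and $\sigma'$ are quotients of $\Theta(\pi)$, and it suffices to show that $\Theta(\pi)$ has a unique generic irreducible quotient. By the see-saw, $\dim\Hom_U(\Theta(\pi),\psi)=\dim\Hom_H(\pi,\mathbb C)$, and by Step 2 this equals $1$ when $\pi=\Theta(\sigma)$. Hence at most one $\psi$-generic constituent appears as a quotient. If $\psi\neq\psi'$ up to conjugacy, the groups $H$ differ (Proposition \ref{P:conjugating_H}), and the same count shows that $\pi$ cannot carry periods for both classes unless $\sigma$ is generic for both characters. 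In that case we reduce to the same $\psi$.

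\textbf{Main obstacle.} The hardest step is the unitarity/semisimplicity argument in Step 2 for non-tempered unitary generic $\sigma$ (complementary series), where matrix-coefficient integrals diverge. There I would first try to control exponents through the Jacquet filtration and a direct identification of $\Theta(\sigma)$ as a degenerate principal series quotient.
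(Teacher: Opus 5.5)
Your overall architecture matches the paper's period ping-pong: unitarity makes $\Theta(\sigma)$ semisimple, every summand carries an $H$-period, multiplicity one of $H$-periods on $\Theta(\sigma)$ forces irreducibility, and multiplicity one of Whittaker functionals on $\Theta(\pi)$ gives injectivity. However, the step carrying all the weight, $\dim\Hom_H(\Theta(\sigma),\mathbb C)\le 1$, rests on a false identification. The see-saw does give $\Hom_H(\Theta(\sigma),\mathbb C)\cong\Hom_{\SL_3}(\Pi_H,\sigma)$. But $\Pi_H=\Theta(1_H)$ is neither the minimal representation of $G_2$ nor a ``small module''. By Proposition \ref{P:Theta_1} it is the full degenerate principal series $I(s)$, $s=\frac12,\frac32,\frac72$, and there is no ``Whittaker uniqueness on the $G_2$ side'' to invoke. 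What you need is the restriction of $I(s)$ to $\SL_3$. The two $\SL_3$-orbits on $G_2/P$, together with a Fourier transform along the open orbit, show that it contains the Gelfand--Graev module $\ind_U^{\SL_3}(\psi_{ab})$ as a submodule, with quotient filtered by two boundary pieces induced from $B$ (Proposition \ref{P:filtration_theta_1}). Only when $\sigma$ is not a quotient of either boundary piece do you get $\Hom_{\SL_3}(\Pi_H,\sigma)\hookrightarrow\Hom_{\SL_3}(\ind_U^{\SL_3}\psi_{ab},\sigma)$, which has dimension $\le 1$ by uniqueness of Whittaker functionals for $\SL_3$. Excluding the boundary is exactly where unitarity of $\sigma$ enters (Lemma \ref{L:hermitian_boundary}: hermitian-ness forces a vanishing coordinate in the exponent, leaving a short explicit list with no unitary generic member). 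This exclusion genuinely fails for some unitary $\sigma$, e.g.\ the trivial representation when $G=\SL_6$. There are two further issues. Your lower bound comes from $\Hom_U(\Theta(\pi),\psi_a)\cong\Hom_{H_a}(\pi^\vee,\mathbb C)$, so you need $\pi$ hermitian to get a period on $\pi$ itself. And the upper bound lands in $\Hom_U(\sigma^\vee,\psi_{ab}^\vee)$, with an unknown twist $b$. Your Step 3 silently uses, when $\psi\neq\psi'$, the equivalence ``$\pi$ has an $H_a$-period iff $\sigma$ is $\psi_a$-generic''. To get it you must close the loop with Prasad's identity $\dim\Hom_U(\sigma,\psi)=\dim\Hom_U(\sigma^\vee,\psi^\vee)$ \cite{Pra19}, plus either Lemma \ref{L:bcube} or three turns of the rally; this is how \eqref{E:ping_pong} is obtained.

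The second gap is semisimplicity and unitarizability of the big lift. Li-type matrix-coefficient integrals, even when they converge, produce a nonzero unitary quotient of $\Theta(\sigma)$, not a unitary structure on $\Theta(\sigma)$ itself. Without semisimplicity, the period count controls at most the cosocle (the small lift), not $\Theta(\sigma)$. You also concede that the integrals fail for complementary series. The paper gets unitarity algebraically. For supercuspidal $\sigma$ one uses the usual splitting-off of the $\sigma$-isotypic part of $\Pi$, and the lift of $\St$ is computed directly. Otherwise $\sigma$ is a summand of $\Ind_P^{\SL_3}(\tau)$ with $\tau$ unitary. The three-step filtration of $r_P(\Pi)$ (Theorem \ref{T:three_step}) shows that such $\tau$ avoids the top two pieces: its central character is not $|\cdot|^{2d+1}$ or $|\cdot|^{3d}$, and $r_{\bar B}(\tau)$ has no exponent $\chi\otimes|\cdot|^d$. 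Hence $\Theta(\Ind_P^{\SL_3}\tau)=\Ind_Q^G(\Theta_{\mini}(\tau))$ (Proposition \ref{P:theta_nc}), which is unitarily induced from a unitary representation, and $\Theta(\sigma)$ is a direct summand of it (Corollary \ref{C:unitary}). Realizing $\Theta(\sigma)$ merely as a quotient of some induced representation, as you suggest at the end, would not suffice. It is the equality in Proposition \ref{P:theta_nc} that yields semisimplicity, and with it the irreducibility of the big lift.
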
 

Recall that an irreducible representation $\pi$ of $G$ is hermitian if $\bar\pi\cong \pi^{\vee}$. Clearly all unitarizable (in particular tempered) representations are hermitian. 
Other hermitian representations are described in terms of the corresponding standard modules, \cite{BM03}.  
The second result is Theorem \ref{thm_main2}, a multiplicity-one statement for $H$-invariant functionals:

\begin{thm}  Let $\pi$ be an irreducible hermitian representation of $G$. Then $\dim \Hom_H(\pi,\mathbb C) \leq 1$. It is one if and only if $\pi$ is a theta lift of a $\psi$-generic representation $\sigma$ of $\SL_3$, for the Whittaker character $\psi$ determined by the conjugacy class of $H$. 
\end{thm}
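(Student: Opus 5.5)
The plan is to turn the question of $H$-invariant functionals on $\pi$ into a question about Whittaker functionals on the big theta lift, and then use Theorem A together with the structure of $\Theta(\pi)$ for hermitian $\pi$.

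\textbf{Step 1: the see-saw identity.} By the see-saw identity $\Pi_{U,\psi}\cong C_c(G/H)$ we have
\[
\Hom_H(\pi,\mathbb C)\cong \Hom_U(\Theta(\pi),\psi),
\]
where $\psi$ is attached to the conjugacy class of $H$. So it suffices to show that $\dim \Hom_U(\Theta(\pi),\psi)\le 1$. We also want to identify exactly when this space is nonzero.

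\textbf{Step 2: the generic constituent of $\Theta(\pi)$.} If $\Theta(\pi)=0$, there is nothing to prove. Otherwise $\Theta(\pi)$ is a finite length representation of $\SL_3$; I would establish this by a Jacquet module computation of $\Pi$ along maximal parabolics. Since the $\psi$-twisted Jacquet functor is exact, $\dim\Hom_U(\Theta(\pi),\psi)$ equals the number of $\psi$-generic constituents of $\Theta(\pi)$, counted with multiplicity. I would then show, again via Jacquet modules of $\Pi$ along the Borel of $\SL_3$, that $\Theta(\pi)_{U,\psi}$ is at most one-dimensional. The intended mechanism is that the cuspidal support of $\Theta(\pi)$ is determined by that of $\pi$, and a standard module of $\SL_3$ has a unique generic constituent. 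The difficulty is multiplicity: two generic constituents could only arise if $\Theta(\pi)$ contained two different standard-module generic pieces. I would rule this out by showing that all constituents of $\Theta(\pi)$ share one infinitesimal character.

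\textbf{Step 3: the hermitian hypothesis.} Suppose $\sigma$ is the generic constituent. We need $\sigma$ to be unitarizable, or at least for Theorem A to apply, so that $\pi\cong\Theta(\sigma)$. Here I would use that $\pi$ is hermitian. Then $\Theta(\pi)$ inherits a hermitian structure, since $\Pi$ is unitary and $\bar\pi\cong\pi^\vee$. This should force $\sigma$ to be a hermitian generic representation of $\SL_3$.

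Hermitian generic representations of $\SL_3$ are classified: they are irreducible standard modules with parameters of the form $(\chi,\chi^{-1}\bar{}\,)$, together with the unitary ones. For the generic ones, I would use Barbasch–Moy together with the fact that generic hermitian representations of $\SL_3$ are fully induced. The aim is to show that $\Theta(\sigma)$ is irreducible and maps onto $\pi$. I would extend the argument of Theorem A to these hermitian generic $\sigma$, since that theorem's proof presumably only uses genericity plus an irreducibility criterion for induced representations. Conversely, if $\pi=\Theta(\sigma)$ with $\sigma$ $\psi$-generic, then $\sigma$ is a quotient of $\Theta(\pi)$. Its Whittaker functional pulls back, so the Hom space is nonzero.

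\textbf{Main obstacle.} The hardest step is Step 2, bounding the $\psi$-generic multiplicity in $\Theta(\pi)$ by one without knowing a priori that $\Theta(\pi)$ is irreducible. My first attempt would be to compute $\Theta(\pi)$'s Jacquet module from the filtration of $\Pi_N$ for the relevant parabolic of $E_n$ meeting $\SL_3$ in its Borel. From this I would show that the exponents of $\Theta(\pi)$ lie in a single Weyl orbit, each with multiplicity controlled by $\pi_{N_G}$. The hermitian condition is then used to exclude the exceptional configuration in which two generic subquotients with conjugate exponents occur.
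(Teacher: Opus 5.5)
\textbf{The gap is Step 2, and Step 3 does not repair it.}

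Step 1 and the easy half of the equivalence are fine. Strictly, the see-saw gives $\Hom_U(\Theta(\pi),\psi)\cong\Hom_G(C_c(G/H),\pi)\cong\Hom_H(\pi^\vee,\mathbb C)$, with $\pi^\vee$ rather than $\pi$. This is precisely where the hermitian hypothesis is really used, via $\pi^\vee\cong\bar\pi$.

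Step 2 is the whole content of the theorem, and your mechanism cannot deliver it. Jacquet modules of $\Theta(\pi)$ along parabolics of $\SL_3$ are blind to supercuspidal constituents, and these do occur. For $\sigma$ supercuspidal, $\pi=\Theta(\sigma)$ is irreducible and unitary by Theorem \ref{thm_main1}. Nothing in your argument excludes $\sigma$ occurring twice in $\Theta(\pi)$, which would make the space two-dimensional. For the same reason, finite length of $\Theta(\pi)$ cannot come from Jacquet modules alone; the paper controls the cuspidal part using finiteness of $\Hom_G(C_c(G/H),\pi)$ for symmetric spaces.

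Even with principal-series support, a single infinitesimal character does not bound the multiplicity of the generic constituent. The fact that a standard module has a unique generic constituent only helps if you already know $\Theta(\pi)$ sits in one standard module with multiplicity one. That would require knowing the Jacquet modules of an arbitrary hermitian $\pi$, which you do not.

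Step 3 fails as well. Nothing produces a nondegenerate invariant hermitian form on a big theta lift. Moreover, hermiticity does not pass to constituents: the minimal representation of $E_6$ is unitary, yet its big theta lift is a sum of two non-hermitian representations (Propositions \ref{P:lift_min} and \ref{P:heisenberg}). Theorem \ref{thm_main1} uses unitarity of $\sigma$ to make $\Theta(\sigma)$ semisimple, so it does not extend to merely hermitian $\sigma$. Finally, a generic \emph{constituent} of $\Theta(\pi)$ need not be a quotient, so you would not recover $\pi$ as a lift of it, and the ``only if'' direction is not established.

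\textbf{The missing idea is the second see-saw, with $G_2\times H$.} Take an irreducible quotient $\sigma$ of $\Theta(\pi)$. Then $\Hom_H(\pi,\mathbb C)\subseteq\Hom_H(\Theta(\sigma),\mathbb C)\cong\Hom_{\SL_3}(\Theta(1_H),\sigma)$. Here $\Theta(1_H)=\Pi_H$ is a degenerate principal series $I(s)$ of $G_2$ (Proposition \ref{P:Theta_1}). Its restriction to $\SL_3$ has a Gelfand--Graev representation at the bottom and a boundary independent of $H$ (Proposition \ref{P:filtration_theta_1}). If $\sigma$ is not a quotient of the boundary, this Hom space embeds into a Whittaker space of $\sigma^\vee$, which has dimension at most one.

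Chain this with $\Hom_U(\sigma,\psi)\subseteq\Hom_U(\Theta(\pi),\psi)\cong\Hom_H(\pi^\vee,\mathbb C)$ and close the loop using hermiticity. Use $\dim\Hom_U(\sigma,\psi)=\dim\Hom_U(\sigma^\vee,\psi^\vee)$. Since the Gelfand--Graev character is $\psi_{ab}$, go around the loop three times, or once after Lemma \ref{L:bcube}. Then every inclusion is an equality, as in (\ref{E:ping_pong}). This gives the bound, and also the ``only if'' direction: every irreducible quotient of $\Theta(\pi)$ is $\psi$-generic exactly when $\Hom_H(\pi,\mathbb C)\neq 0$.

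The hermitian $\pi$ whose lifts can meet the boundary must be treated separately. These are the trivial, minimal, and degenerate-series representations of Lemma \ref{L:boundary}. For them the paper computes $\Theta(\pi)$ directly from the Siegel and Heisenberg Jacquet modules (Propositions \ref{P:theta_siegel} and \ref{P:heisenberg}). Your plan does not isolate these cases at all.
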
 

We derive both of these multiplicity-free results using the method of period ping-pong, see \S \ref{subs:pp}. As far as making the correspondence explicit, if $\sigma$ is tempered, then we determine the parameter of $\Theta(\sigma)$. In particular, it satisfies the expectations of BZSV \cite{BZSV} for $H$-distinguished representations.. We also have a rather complete description of $\Theta(\sigma)$ for non-tempered $\sigma$. For example, if $\sigma$ is (full) induced from the Borel subgroup, then $\Theta(\sigma)$ is the full degenerate principal series induced from a co-rank $2$ parabolic subgroup with Levi of type $A_1\times A_1\times A_1$ if $G=\SL_6$, and type $D_4$ if $G=E_6$. We note that the symplectic period in $G=\SL_6$ has been studied in \cite{Mi14} and \cite{OS07}; the main novelty of this theorem is thus the $E_6/F_4$ case.

\subsection*{Overview}
We begin in Section \ref{S:embed} by discussing the possible embeddings of $H$ in $G$ and $\SL_3$ in $G_2$, as seen in diagram \eqref{eq_seesaw1}. In Section \ref{S:periods} we compute the key periods, including the Whittaker coinvariants of the minimal representation, as well as $\Pi_H=\Theta(1_H)$, the lift of the trivial representation of $H$ to $G_2$ and its restriction to $\SL_3$. Section \ref{S:JMSL3} is devoted to a computation of the Jacquet module of the minimal representation with respect to a maximal parabolic in $\SL_3$; this is then used in Section \ref{S:noncusp} to compute the lifts of non-cuspidal representations. Section \ref{S:3SL3} is a self-contained treatment of the correspondence in $E_6$; the rest of the paper focuses on the correspondences in $E_7$ and $E_8$. 

First, in Section \ref{S:support}, we narrow down the possibilities for cuspidal support of lifts from $\SL_3$ to $G$. The relevant Jacquet modules are then computed in Sections \ref{S:Siegel} and \ref{S:heisenberg}. Having done the necessary preparations, we are ready to play period ping-pong: we use it to prove our main results in Section \ref{S:tabletennis}. To complete the picture, we compute the lifts of tempered representations in Section \ref{S:parameters}. Appendix \ref{S:orbits} features a discussion of $\SL_3$ orbits on $G_2$ flag varieties; this is used in crucial period computations in \S\ref{S:periods}. In Appendix \ref{S:mini}, we summarize the theta correspondence between $\GL_2$ and a similitude spin group, which features in Jacquet module computations of Section \ref{S:JMSL3} and thus plays a prominent role throughout the paper.

\subsection*{Notation and conventions}
We fix a local non-archimedean field $F$ of characteristic $0$ and let $\bar F$ denote its algebraic closure. We often use the same symbol to denote a reductive group as well as its $F$-points; this abuse of notation is harmless, as there is never any ambiguity.

Unless otherwise stated, we work in the category of smooth (complex) representations. If $\pi$ is a representation of a $p$-adic group $G$, and $\psi$ a character of a subgroup $H\subset G$, we write $\pi_{H,\psi}$ for the space of twisted coinvariants of $\pi$ with respect to $(H,\psi)$; if $\psi$ is trivial, we write simply $\pi_H$. We use $\ind$ to denote compact (and in particular unnormalized parabolic) induction, and $\Ind$ to denote normalized parabolic induction.

We often refer to the big/full theta lift, as defined above. As is customary, we refer to the maximal semisimple quotient of big theta as the small theta lift.

Finally, to avoid introducing excessive notation, we recycle certain symbols (like $\Theta$, $P=MN$, etc.) throughout the paper. This is always explicitly announced, so there is no fear of confusion.
}

\section{Some embeddings} \label{S:embed}
Let $x\in J$ be an element of non-zero norm. Recall that the stabilizer of $x$ in $G$ is a group isomorphic to $H$, shown in Table \eqref{table}. These stabilizers are parameterized by classes of cubes:
\begin{prop} \label{P:conjugating_H}
Let $x,y\in J$ have non-zero norm. Then the stabilizers in $G$ of $x$ and $y$ (two groups isomorphic to $H$) are conjugate in $G$ if and only 
if $N_J(x)/N_J(y)$ is a cube.  
\end{prop}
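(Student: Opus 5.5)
Write $H_x$ for the stabilizer of $x$ in $G$. The plan is to reduce conjugacy of stabilizers to conjugacy of the lines they fix, and then use the fact that $G$ acts on $J$ preserving $N_J$ (since it preserves the trilinear form) together with a description of the $G$-orbits on elements of fixed nonzero norm.

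\emph{Step 1: a stabilizer determines its fixed line.} We show that the fixed space $J^{H_x}$ is exactly the line $Fx$. Over $\bar F$ this can be read off from the branching of $J$ to $H$ in the three cases of Table \eqref{table}: $V_3\otimes V_3$ restricts to $\SL_3$ as $1\oplus\mathfrak{sl}_3$; $\wedge^2V_6$ restricts to $\Sp_6$ as $1\oplus V_{14}$; $V_{27}$ restricts to $F_4$ as $1\oplus V_{26}$. In each case the trivial summand occurs once. It follows that if $gH_xg^{-1}=H_y$, then $gx$ is fixed by $H_y$, so $gx=\lambda y$ for some $\lambda\in F^\times$. Conversely, $H_{\lambda y}=H_y$ for every $\lambda\in F^\times$.

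\emph{Step 2: a norm computation.} Consequently $H_x$ and $H_y$ are conjugate if and only if $gx=\lambda y$ for some $g\in G$ and $\lambda\in F^\times$. Because $G$ preserves the trilinear form, $N_J(gx)=N_J(x)$, and $N_J(\lambda y)=\lambda^3N_J(y)$. Hence conjugacy forces $N_J(x)/N_J(y)=\lambda^3$ to be a cube, which proves one direction.

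\emph{Step 3: transitivity on norm levels.} For the converse, suppose $N_J(x)=\lambda^3N_J(y)$. Replacing $y$ by $\lambda y$, which does not change $H_y$, we may assume $N_J(x)=N_J(y)=c\neq 0$. It then suffices to show that $G(F)$ acts transitively on $\{z\in J: N_J(z)=c\}$. This transitivity is the main obstacle. I would try two approaches.
\begin{enumerate}
\item[(a)] Galois cohomology. Over $\bar F$ the group $G$ acts transitively on this level set, since it is a single orbit of the similitude group's derived part. So the $G(F)$-orbits inside the $F$-points are classified by $\ker\bigl(H^1(F,H)\to H^1(F,G)\bigr)$. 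Since $G$ is simply connected, $H^1(F,G)=1$ by Kneser. For $H=F_4$ and $H=\Sp_6$ we also have $H^1(F,H)=1$ over a $p$-adic field. In the case $G=\SL_3\times\SL_3$, $H=\SL_3$ embedded diagonally, $H^1(F,\SL_3)=1$ as well. In every case the kernel is trivial.
\item[(b)] Concrete reduction. As a backup, one can work case by case: reduce to diagonal elements using the Jordan-algebra structure, then use elements of $G$ acting on diagonal entries by $\mathrm{diag}(a,b,(ab)^{-1})$ to normalize one element to $\mathrm{diag}(1,1,c)$.
\end{enumerate}
A subtlety in approach (a) is that $G$ acts on $J$ only through $G/\mu_m$. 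This is harmless, because $H_x$ is the full stabilizer in $G$, and that stabilizer contains $\mu_m$.

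Combining the steps: if $N_J(x)/N_J(y)$ is a cube, then Step 3 gives $g\in G$ with $gx=\lambda y$, hence $gH_xg^{-1}=H_{gx}=H_{\lambda y}=H_y$, and the two stabilizers are conjugate.
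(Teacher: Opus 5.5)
Your proof is correct, but it organizes the argument differently from the paper.

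\textbf{The paper's route.} It gets the ``if'' direction by quoting transitivity of $G$ on each nonzero norm level, extrapolated from the integral orbit classification in \cite{SW15}. It gets the ``only if'' direction from a Galois-cohomology count. Using that the normalizer of $H$ is $H\times\mu_3$, the $G(F)$-classes of $F$-subgroups that are $G(\bar F)$-conjugate to $H$ are identified with $\ker\bigl(H^1(F,H\times\mu_3)\to H^1(F,G)\bigr)=H^1(F,\mu_3)\cong F^\times/(F^\times)^3$.

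\textbf{Your route.} You reverse the roles.
\begin{enumerate}
\item[(1)] Your ``only if'' direction is elementary. The single trivial summand in the branching of $J$ to $H$ forces a conjugating element to send $x$ into $Fy$, and preservation of $N_J$ then makes the ratio a cube.
\item[(2)] You spend Galois cohomology on transitivity itself, via $\ker\bigl(H^1(F,H_x)\to H^1(F,G)\bigr)=1$, instead of citing \cite{SW15}. You also correctly note that the kernel $\mu_m$ of the action is harmless.
\end{enumerate}

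\textbf{What each buys.} Your route is self-contained and more direct. The paper's count yields the ``only if'' direction only once one knows that the natural map from norm classes to conjugacy classes is a bijection, not merely a map between two sets of the same size. The natural way to see this is your Step~1: every $F$-subgroup that is $\bar F$-conjugate to $H$ fixes a Galois-stable, hence $F$-rational, line, and so equals some $H_x$.

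The paper's route gives, in addition, the structural fact that \emph{all} $G(F)$-classes of such subgroups are accounted for and form an $F^\times/(F^\times)^3$-torsor. This is how the proposition is used later in the paper. Your Step~1 recovers it with the one-line remark just mentioned.

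Both arguments ultimately rest on the same vanishing of $H^1$ for simply connected groups over a $p$-adic field. Your backup approach~(b) is not needed.
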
 
\begin{proof} 
In \cite{SW15}, $G$-orbits on $J$ are classified over $\mathbb Z$. Extrapolating to fields, it follows that $G$ acts transitively on elements of a fixed non-zero norm. 
Since the stabilizers of $x$ and $ax$ are the same for $a\in F^{\times}$, and since $N_J(ax)=a^3 N_J(x)$, we have one direction of the statement. We use Galois cohomology to complete the proof. The normalizer of $H$ in $G$ is $H\times\mu_3$, hence the orbits correspond to points in the kernel of the map 
\[ 
H^1(F, H\times \mu_3) \rightarrow H^1(F,G). 
\] 
Since $H$ and $G$ are simply connected, their Galois cohomology vanishes. This implies the kernel of the above map is $H^1(F,\mu_3)=F^{\times}/(F^{\times})^3$. 
\end{proof} 


%


Let $H_a\subseteq G$ be the subgroup fixing an element $x\in J$ of norm $a$. As explained in the introduction, we have a see-saw of dual pairs 
\begin{equation} \label{eqseesaw}
\arraycolsep=1.5pt\def\arraystretch{1.35}
\begin{array}{ccc}
G_2^{a} & & G \\
| & \bigtimes & | \\
 \SL_3  & & H_a.  
\end{array}
\end{equation} 
Here we use the superscript in $G_2^a$ to emphasize that this subgroup isomorphic to $G_2$ depends on $H_a$; for $a=1$ we omit the superscript and write simply $G_2$. We fix an isomorphism $f_a : G_2^a \to G_2$. Precomposing this isomorphism with the embedding $\SL_3 \to G_2^a$ from the above diagram, we get an embedding $i_a: \SL_3\rightarrow G_2$. As $x$ varies, so does $i_a$. Let us answer how:

\begin{prop}
\label{p:SL3toG2}The conjugacy class of the embedding  $i_a\colon \SL_3\to G_2$ from see-saw diagram \eqref{eqseesaw} is obtained from the one for $a=1$ by pre-conjugating 
$i_1\colon \SL_3 \to G_2$ with an element $g$ in $\GL_3$ of determinant $a$.  
\end{prop}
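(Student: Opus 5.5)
Since $\SL_3$ is the fixed group of the whole construction, the plan is to produce, inside the adjoint group of $\mathfrak e$, an element that centralizes $\SL_3$ up to a twist, carries $H_1$ to $H_a$, and therefore carries $G_2$ to $G_2^a$. The natural candidate is a torus element coming from the $\mathbb Z/3$-grading. Concretely, let $t\in\GL_3$ be an element with $\det t=a$. Consider the automorphism $\phi$ of $\mathfrak e$ defined on the graded pieces as follows:
\begin{itemize}
\item on $\mathfrak{sl}_3$ it acts by $\mathrm{Ad}(t)$;
\item on $\mathfrak g$ it acts trivially;
\item on $V_3\otimes J$ it acts by $t\otimes \lambda$;
\item on $V_3^*\otimes J^*$ it acts by $(t^{-1})^{t}\otimes\lambda^{-1}$.
\end{itemize}
Here $\lambda$ is the scalar chosen so that the bracket $\wedge^3(V_3\otimes J)\to \mathfrak g$ (or to $V_3^*\otimes J^*$) is preserved. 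That bracket involves $\det$ on $V_3$ tensored with the trilinear form on $J$, so $\phi$ is a Lie algebra automorphism precisely when $\det(t)\lambda^3=1$. However, such a scalar does not rescale $x$ inside $J$.

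The better approach is to allow the $J$-part to be a similitude $s$ of the cubic form, with multiplier $\nu(s)$, and require $\det(t)\,\nu(s)=1$. The steps are then as follows.

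\textbf{Step 1: construct $\phi$.} Take $s$ to be the scalar $a^{-1/3}$ if that is available. To avoid cube roots, instead use the similitude group of $J$, which acts transitively on elements of nonzero norm up to the norm multiplier; this is Proposition~\ref{P:conjugating_H} together with similitudes such as scalars. Choose a similitude $s$ with $s(x_1)=x_a$, where $N(x_1)=1$ and $N(x_a)=a$. Its multiplier is $\nu(s)=a$ up to cubes. Set $\det t=\nu(s)^{-1}$, or $\nu(s)$ depending on the convention for the $J^*$ part, and check via the explicit bracket of \cite{Rum97} that $\phi=(\mathrm{Ad}\,t,\ \mathrm{Ad}\,s|_{\mathfrak g},\ t\otimes s,\ t^{*}\otimes s^{*})$ is an automorphism of $\mathfrak e$. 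This verification is the main computational point: one must confirm that all brackets among the graded pieces are equivariant for the group of pairs $(t,s)$ with $\det t\cdot\nu(s)=1$. The brackets are built from the $\GL_3\times\mathrm{GSim}(J)$-equivariant maps $\det$, the trilinear form, and the moment maps, so equivariance reduces to checking that the weights match. I expect this to be straightforward but tedious.

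\textbf{Step 2: transport the dual pairs.} Since $\phi(\mathfrak{sl}_3)=\mathfrak{sl}_3$ and $\phi(\mathfrak g)=\mathfrak g$, and since $\phi$ conjugates the stabilizer $H_1$ of $x_1$ to the stabilizer $H_a$ of $x_a$, it maps the centralizer $\mathfrak e^{H_1}=\mathfrak g_2$ isomorphically onto $\mathfrak e^{H_a}$. Hence $\phi$ restricts to an isomorphism $G_2\to G_2^a$, which we may take as $f_a^{-1}$; any other choice of $f_a$ differs by an automorphism of $G_2$, and all such automorphisms are inner. Restricted to $\SL_3$, $\phi$ is $\mathrm{Ad}(t)$. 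Therefore $i_a=i_1\circ\mathrm{Ad}(t)^{\pm1}$ up to $G_2$-conjugacy, with $\det t=a$ up to cubes.

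\textbf{Step 3: remove the ambiguities.} Changing $t$ by scalars or by elements of $\SL_3$ changes $\det t$ by cubes and changes the embedding only by an inner automorphism. So only $\det t$ modulo cubes matters, which gives exactly the statement. The main obstacle is the equivariance check in Step 1, in particular pinning down the sign of the exponent of $\det t$, which distinguishes $a$ from $a^{-1}$. I would settle the sign with an explicit torus element acting on a single root vector.
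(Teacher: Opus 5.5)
Your argument is essentially the paper's proof. The paper uses the group $(\GL_3\times G\bar G)^{\det}$ of pairs with $\det(g)\,s(h)=1$. Its quotient by $\Delta(F^\times)$ is $(\SL_3\times_{\mu_3}\bar G)(F)$ inside adjoint $E_n$, so the action on $\mathfrak e$ comes for free rather than from your bracket-by-bracket check. An element with $\det g=s(h)^{-1}=a$ carries the see-saw for $a$ to the one for $a=1$ and restricts to $\mathrm{Ad}(g)$ on $\SL_3$; your $\phi$ is just the inverse of such an element.

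The sign you leave open is already fixed by your own constraint $\det t\cdot\nu(s)=1$. The alternative $\det t=\nu(s)$ cannot occur, because the pairing bracket $[V_3\otimes J,V_3^*\otimes J^*]\to\mathfrak{sl}_3\oplus\mathfrak g$ forces the contragredient action on $V_3^*\otimes J^*$. Then $s x_1=x_a$ gives $\nu(s)=a$, and $f_a=\phi^{-1}$ gives $i_a=i_1\circ\mathrm{Ad}(t^{-1})$ with $\det t^{-1}=a$, exactly as stated.

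One small correction: when $a$ is not a cube, the similitude $s$ cannot be a scalar, since scalars only have cube multipliers. So ``similitudes such as scalars'' does not justify the existence of $s$; you need a non-scalar similitude of multiplier $a$, whose existence the paper also takes for granted.
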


\begin{proof}  Let $\mu_m$, $m=3,2,1$,  be the center of simply connected $E_n$. Then $\mu_m \subset G$, and let $\bar G=G/\mu_m$. 
This group acts faithfully on $J$ and it is in fact  (the connected component of)  the group of linear transformations of $J$ preserving the cubic form. 
Since $\SL_3\cap \bar G=\mu_3$, the adjoint $E_n$ contains a group 
\[ 
\SL_3\times_{\mu_3} \bar G. 
\] 
Let us describe the groups of $F$-points. To that end, let $G\bar G$ be (the connected component of) 
 the group of similitudes of the cubic norm $N_J$, and let $s:G\bar G\rightarrow F^{\times}$ be the 
similitude character. The group $\bar G$ is the kernel of $s$. 
Let $(\GL_3\times G\bar G)^{\det}$ be the subgroup consisting of pairs $(g,h)$ such that $\det(g)\cdot s(h)=1$. Then this group acts naturally on 
\[ 
\mathfrak{sl}_3 \oplus \mathfrak g \oplus V_3\otimes J \oplus V_3^{\ast}\otimes J^{\ast}  
\] 
with the kernel given by $\GL_1$ embedded by $\Delta(a)=(a,a^{-1})$ into $(\GL_3\times G\bar G)^{\det}$. Thus 
\[ 
(\SL_3\times_{\mu_3} \bar G)(F)= (\GL_3(F)\times G\bar G(F))^{\det}/\Delta(F^{\times}). 
\] 

Now we can finish the proof. An element $(g, h)$ of $(\GL_3\times G\bar G)^{\det}$ such that 
$\det(g)= s(h^{-1})=a$  conjugates the above see-saw diagram (\ref{eqseesaw}) to the one with $a=1$. 
However, the resulting vertical arrow from $\SL_3$ is pre-conjugated by $g$.
\end{proof} 
We have a similar situation for conjugacy classes of Whittaker characters for $\SL_3$, as they are also parameterized by classes of cubes.  Indeed, 
if we fix an additive character $\psi$, then any Whittaker character is of the form 
\[ 
 \left(\begin{matrix} 
1 & x & z  \\
0 & 1 &  y \\
0 & 0  &  1 \\ 
\end{matrix}\right) \mapsto \psi(ax+by). 
\] 
Its invariant is $a/b$, considered in the class of cubes in $F^{\times}/(F^{\times})^3$. It is easy to see that two Whittaker functionals are conjugate if and only if 
the invariants are the same. Note that conjugating the Whittaker character by $g\in \GL_3$ changes its invariant by $\det(g)$.  

Summarizing the above discussion, we see that the following are all torsors under $F^\times / (F^\times)^3$:
\begin{enumerate}[(i)]
    \item $G$-conjugacy classes of embeddings $H \to G$;
    \item $G_2$-conjugacy classes of embeddings $\SL_3 \to G_2$;
    \item $\SL_3$-conjugacy classes of Whittaker data $(U,\psi)$ for $\SL_3$.
\end{enumerate}
Proposition \ref{p:SL3toG2} shows that the see-saw diagram gives an isomorphism between (i) and (ii). In Section \ref{S:periods}, we show that the theta correspondence gives rise to an isomorphism between (i) and (iii).

\section{Some periods} \label{S:periods}
Here we compute the key periods. These computations rely on concrete models for the minimal representation, given in Theorems \ref{prop:msE6} and \ref{prop:msE8}. Similar computations have been carried out in various settings, so we occasionally omit the details and instead point to relevant references. 

\subsection{Computing Whittaker coinvariants} \label{subs_Whittaker_coinv}
For $a\in F^{\times}$, let $H_a\subseteq G$ be the stabilizer of an element in $J$ of norm $a$. Fix an additive character $\psi_0$ of $F$.
Let  $(\psi_a,U)$ be a Whittaker datum for $\SL_3$ such that $U$ is the subgroup of unipotent upper triangular matrices, and 
 $\psi_a(u)=\psi_0(ax+y)$, where $x,y$ are the (1,2) and (2,3) entries of $U$.
Let $\Pi$ be the minimal representation.  Then  $\Pi_{U,\psi_a}\simeq C_c(\omega_a)$, 
where $\omega_a$ is the set of $x\in J$ such that $N_J(x)=a$. This is completely analogous, in fact easier, than \cite[page 213, Lemma 2.9]{GrS98}. 
In \cite{SW15}, $G$ orbits on $J$ are classified over $\mathbb Z$. Extrapolating to fields, it follows that $G$ acts transitively on $\omega_a$. Hence $\omega_a=G/H_a$ and 
\[ 
\Pi_{U,\psi_a}\simeq C_c(G/H_a). 
\]

\subsection{Computing $\Theta(1_H)$} 
We drop the subscript $a$, as it plays no role in this section. 
For the dual pair $G_2 \times H$ we compute $\Theta(1_H)=\Pi_H$. 
 Let $P=MN\subset G_2$ be the Heisenberg maximal parabolic subgroup, that is, the stabilizer of a 2-dimensional nil-subspace $V_2$ (see Appendix \ref{S:orbits}). 
Then $M\simeq \GL(V_2)$, and the modular character $\rho_N^{1/2}$ is $|\det|^{3/2}$.   
Let $I(s)=\Ind_P^{G_2}(|\det|^s)$ denote the degenerate series induced from the character $|\det|^s$ of $\GL_2\simeq M$. For $s\geq 0$, this representation reduces only for $s=1/2$ and $3/2$; at $3/2$ it has the trivial representation as a unique quotient, see \cite{Mu97}.

\begin{prop} \label{P:Theta_1} 
For $G_2\times H$ in $E_n$, $n=6,7,8$, 
$\Theta(1_H)$ is isomorphic to $I(s)$ for $s=1/2, 3/2,7/2$, respectively. 
\end{prop}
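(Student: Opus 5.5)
We compute the $H$-coinvariants $\Pi_H$ through a model of $\Pi$ adapted to the Heisenberg parabolic of $G_2$, and then identify the resulting $G_2$-module using the orbit analysis of Appendix \ref{S:orbits}. Since $G_2$ is the centralizer of $H$ in $E_n$, the Heisenberg parabolic $P=MN$ of $G_2$ lies in the Heisenberg parabolic $\mathcal P=\mathcal M\mathcal N$ of $E_n$, or at least its Levi sits in a corresponding Levi. We use the Schrödinger-type model of $\Pi$ from Theorems \ref{prop:msE6} and \ref{prop:msE8}. The abelianized unipotent radical of the $E_n$ Heisenberg parabolic is a Freudenthal triple system $W=F\oplus J'\oplus J'^{*}\oplus F$, where $J'$ is a cubic Jordan algebra of dimension $6,9,15,27$ for $E_6,E_7,E_8$.

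\textbf{Step 1: the Jacquet module.} From the model we read off the structure of $\Pi$ as a $\mathcal P$-module. The $\mathcal N$-coinvariants are $\Pi_{\mathcal N}$, a sum of the minimal representation of $\mathcal M$ and a character. The kernel is $C_c^\infty(\Omega)$, where $\Omega\subset W$ is the minimal orbit of rank-one elements. Restricting to $H\times P$, the group $H$ fixes the part of $W$ that $G_2$ sees, namely $V_2$ and the cubic-norm pieces. This gives a filtration of $\Pi_H$ whose graded pieces are computed orbit by orbit. The pieces supported on $\Omega$ contribute compactly supported functions on the set of $H$-orbits of rank-one elements. By the orbit analysis in Appendix \ref{S:orbits}, this produces $\ind_P^{G_2}$ of a character of $M\cong\GL(V_2)$ on the open cell, together with contributions from boundary pieces.

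\textbf{Step 2: identifying the exponent.} The character of $M$ appearing in the open piece is a power of $|\det|$ determined by the modulus characters. It is the restriction to $M$ of the central character by which $\mathcal M$ acts on the Schrödinger model, combined with the Jacobian of the $H$-integration over the fibre $H\backslash(\text{orbit})$. This fibre has dimension growing with $\dim J'$, which is exactly what makes $s$ depend on $n$. Doing the bookkeeping, with the normalization $\rho_N^{1/2}=|\det|^{3/2}$, we expect the exponents $1/2,3/2,7/2$. As a sanity check, $s=3/2$ for $E_7$, where $H=\Sp_6$, is consistent with $\Theta(1_H)$ having the trivial representation of $G_2$ as a quotient.

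\textbf{Step 3: gluing.} Having a $G_2$-map $\Pi_H\to I(s)$, or a filtration with $I(s)$ as the main subquotient, we must show that the remaining pieces vanish or are absorbed. This is done by comparing exponents of Jacquet modules and by using that $I(s)$ has a unique irreducible quotient and is of length at most two for these $s$ by \cite{Mu97}. Concretely, we compute $(\Pi_H)_N=(\Pi_N)_H$ directly. Its semisimplified exponents should match those of $I(s)_N$ given by the geometric lemma, which forces $\Pi_H\cong I(s)$ up to extension issues. The extensions are ruled out because $I(s)$ is characterized among modules with those exponents by its cosocle and socle.

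\textbf{Main obstacle.} I expect the hardest step to be the orbit analysis of $H\times P$ on the rank-one locus, combined with pinning down the boundary contributions precisely enough to get an isomorphism with $I(s)$ rather than just a semisimplification match. This is especially delicate at $s=1/2$ and $s=3/2$, where $I(s)$ is reducible. I would first try to show that the boundary strata contribute nothing $H$-invariantly, using that $H$ acts on them with no invariant functionals by an unbounded-torus argument. After that, Frobenius reciprocity and the open-orbit piece would give a map from $\Pi_H$ to $I(s)$, which I would then check is an isomorphism.
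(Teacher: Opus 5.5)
\textbf{Gap in Steps 1--3.} Your outline does not contain the step that actually pins $\Pi_H$ down, and the step you propose in its place would fail.

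Steps 1--2 are not a computation. The exponent $s$ is only ``expected''. Appendix~\ref{S:orbits} classifies $\SL_3$-orbits on $G_2$ flag varieties and says nothing about $H\times P$-orbits on the rank-one locus $\Omega$. The coinvariants $C_c(\Omega)_H$ are not simply functions on an orbit space. A $P$-stable filtration of $\Pi_H$ cannot by itself produce the $G_2$-module $\ind_P^{G_2}(\cdot)$. (Also, of the two models you cite, only the $E_8$ one is a Heisenberg-parabolic model; the other concerns the Siegel parabolic of $E_7$.)

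More seriously, Step 3 cannot work as stated. Matching the exponents of $(\Pi_H)_N=(\Pi_N)_H$ with those of $r_P(I(s))$ is blind to every constituent of $\Pi_H$ with vanishing $N$-Jacquet module. These include supercuspidals, and representations with cuspidal support on the long-root Levi, which is not conjugate to $M$. The claim that $I(s)$ is ``characterized by its socle and cosocle'' presupposes that you know the socle and cosocle of $\Pi_H$, which is the statement to be proved. Your hope that the boundary strata contribute nothing $H$-invariantly is also false. The one-dimensional summand of $\Pi_{\mathcal N}$ is a character of $\mathcal M$, hence trivial on $H$, and it survives as a quotient of $(\Pi_N)_H$. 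Indeed $r_P(I(s))$ has pieces coming from every $P$-double coset, not just the open one.

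\textbf{What the paper uses instead.} The paper's proof rests on two ingredients you are missing.

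The first is a map in the opposite direction: $\Theta(1_H)$ is a quotient of $I(s)$, which is read off from Jacquet modules of $\Pi$ \cite{SW15}.

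The second is a detector that sees every irreducible representation of $G_2$: the generic characters $\psi_E$ of $N$, whose $M$-orbits are indexed by cubic \'etale algebras $E$. Three facts make it work. Every nontrivial irreducible $\pi$ has $\pi_{N,\psi_E}\neq 0$ for some $E$ \cite{GS23}. One has $\dim I(s)_{N,\psi_E}=1$ by the Bruhat decomposition. Finally, $\Pi_{N,\psi_E}\cong C_c(H/H_E)$ \cite{GrS98} gives
\[
\Theta(1_H)_{N,\psi_E}\cong (\Pi_{N,\psi_E})_H\cong C_c(H/H_E)_H\cong\mathbb C .
\]
By exactness, the kernel of $I(s)\twoheadrightarrow\Theta(1_H)$ has zero $\psi_E$-coinvariants for every $E$, so all its constituents are trivial. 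The trivial representation occurs only in $I(3/2)$, where it is the unique quotient and hence survives in $\Theta(1_H)$. Therefore the kernel is zero.

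Once you have the surjection from $I(s)$, your exponent matching could in principle replace the $\psi_E$ argument, since every constituent of $I(s)$ has nonzero $r_P$. That would require computing $(\Pi_N)_H$ with multiplicities, which is far harder than the one-line twisted computation above.
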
 
\begin{proof} Using Jacquet functors \cite{SW15}, one checks that $\Theta(1_H)$ is a quotient of $I(s)$ with the 
listed $s$. Recall, from \cite{HMS},  that
 $M$-orbits of generic characters of $N$ are parameterized by cubic \'etale algebras $E$. Let $\psi_E$ denote a character in the corresponding orbit. If $\pi$ is a non-trivial irreducible representation of $G_2$, then 
 $\pi_{\psi_E}\neq 0$ for some $E$ \cite[Prop. 3.4]{GS23}. Since the trivial representation of $G_2$ appears only in $I(3/2)$ as a unique 
 quotient, to finish the proof, it suffices to show that 
 \[ 
 I(s)_{N,\psi_E}\simeq  \Theta(1_H)_{N,\psi_E} 
 \] 
for all $E$. The former space is one-dimensional by Bruhat decomposition, so it suffices to prove that the latter is one-dimensional for all $E$.  
To that end, recall \cite[page 213, Lemma 2.9]{GrS98} that $\Pi_{N,\psi_E} \simeq C_c(H/H_E)$ for a certain reductive subgroup $H_E\subset H$. 
Now the desired one-dimensionality follows from the following sequence of isomorphisms, 
\[ 
 \Theta(1_H)_{N,\psi_E} \simeq (\Pi_H)_{N,\psi_E}\simeq (\Pi_{U,\psi_E})_H\simeq C_c(H/H_E)_H \simeq \mathbb C, 
\]
where the first one is the definition of $\Theta(1_H)$, the second is switching the order of coinvariants, and the last follows from Frobenius reciprocity. 
\end{proof}

\subsection{Restricting $I(s)$ to $\SL_3$}   \label{subs:boundary}
We compute the restriction of  the degenerate principal  series $I(s)= \Ind_P^{G_2}(|\det|^s)$ to $\SL_3$. We realize $G_2$ and $\SL_3\subseteq G_2$ as in Appendix \ref{S:orbits}. 
In particular, we have fixed the embedding $\SL_3\subseteq G_2$.

Recall, from Proposition \ref{P:orbtis_3} (and subsequent discussion) that $\SL_3$ acts on $G_2/P$ with two orbits, such that the stabilizer of $V_2$ in the closed orbit is $B$, and the stabilizer of $V'_2$ in the closed orbit is $B_0 = T_0U_0$. Fix the Whittaker datum $(U,\psi_1)$ for $\SL_3$; note that $U_0$ is in the kernel of $\psi_1$. 

Before the general case, consider the unnormalized $\ind_P^{G_2}(1)$  (=  functions on $G_2/P$). We claim that, as an $\SL_3(F)$-module, it has 
a filtration with three subquotients (from top to bottom) 
\begin{itemize} 
\item $\ind_B^{\SL_3}(1)$;
\item $\ind_B^{\SL_3}(\rho_U^{1/4} \cdot C_c(F^{\times}))$;
\item $\ind_U^{\SL_3}(\psi_1)$ (Gelfand--Graev).
\end{itemize} 
The representation in the first bullet comes from the closed orbit.  
The open orbit gives a sub $\ind_{B_0}^{\SL_3}(1)$ which we shall analyze using induction in stages, where we first induce from 
$B_0$ to $T_0U$. Identify $U/U_0 \cong F$ using either the (1,2) or (2,3) entry of $U$. Thus $\ind^{T_0U}_{B_0}(1) \cong C_c(F)$. Apply the Fourier transform. Then 
$C_c(F^{\times})$ is a submodule and the quotient is given by evaluating functions at 0. As $T_0U$-modules we have an exact sequence 
\[ 
0\rightarrow \ind_{U}^{T_0U}(\psi_1) \rightarrow \ind^{T_0U}_{B_0}(1)  \rightarrow \mathbb C \cdot |a| \rightarrow 0 
\] 
where the last is a character of $T_0U$ given by 
\[ 
 \left(\begin{matrix} 
a & x & z  \\
0 & 1 &  y \\
0 & 0  & a^{-1} \\ 
\end{matrix}\right) \mapsto |a |, 
\] 
 a restriction of $\rho_U^{1/4}$. Now the promised structure of the last two bullets follows by induction in stages. For  the second bullet, 
we first induce the character $|a|$ to $B$ and use $\ind_{T_0}^T(1) \cong C_c(F^{\times})$ so that 
\[ 
 \left(\begin{matrix} 
a & 0 & 0 \\
0 & b &  0 \\
0 & 0  & c \\ 
\end{matrix}\right) \in T
\] 
acts on $C_c(F^{\times})$ by right translation by $b$. The third bullet is clear. It remains to add normalizations and consider the restriction of $I(s)$. 
For the first bullet we use that the restriction of $|\det|$ (the character obtained by action on $V_2$) to $B$ is $\rho_U^{1/2}$. 
For the second bullet we use that the restriction of $|\det|$ on $V_2'$ is $|a|$.  We leave the details to the reader. We have the following:
\begin{itemize} 
\item $\Ind_B^{\SL_3}(\rho_U^{\frac{1}{2}(s+\frac12)})$;
\item $\Ind_B^{\SL_3}(\rho_U^{\frac{1}{4}(s+\frac12)} \cdot C_c(F^{\times}))$; 
\item $\ind_U^{\SL_3}(\psi_1)$ (Gelfand--Graev).
\end{itemize} 

Now we can compute the restriction of $\Theta(1_{H_a})$ to $\SL_3$. Of course, $\Theta(1_{H_a})=I(s)$, for all $a$,  however, the embedding of $\SL_3$ into 
$G_2$ depends on $a$ as explained: It is obtained by pre-conjugating the embedding for $a=1$ by $g\in \GL_3$ with $\det(g)=a$. Hence:

\begin{prop}  \label{P:filtration_theta_1} 
There exists an element $b\in F^{\times}$ such that for all $a\in F^{\times}$, $\Theta(1_{H_a})$ has an $\SL_3$-filtration with successive quotients 

\begin{itemize} 
\item $\Ind_B^{\SL_3}(\rho_U^{\frac{1}{2}(s+\frac12)})$;
\item $\Ind_B^{\SL_3}(\rho_U^{\frac{1}{4}(s+\frac12)} \cdot C_c(F^{\times}))$;
\item $\ind_U^{\SL_3}(\psi_{ab})$ (Gelfand--Graev).
\end{itemize} 
\end{prop}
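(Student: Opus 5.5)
The plan is to transport the filtration of $I(s)|_{\SL_3}$ computed just above, which is for the fixed embedding $i_1$, to the embedding $i_a$ via Proposition \ref{p:SL3toG2}, and then track how each graded piece changes. By Proposition \ref{P:Theta_1}, $\Theta(1_{H_a})\cong I(s)$ as a $G_2^a$-module. Here $G_2^a$ is identified with $G_2$ through the fixed isomorphism $f_a$. Thus the $\SL_3$-module $\Theta(1_{H_a})$ is $I(s)$ restricted along $i_a$. Proposition \ref{p:SL3toG2} says $i_a$ is $G_2$-conjugate to $i_1\circ \mathrm{Ad}(g)$ for some $g\in\GL_3(F)$ with $\det g=a$.

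Conjugation by an element of $G_2$ does not change the isomorphism class of the restriction, since $I(s)$ is a $G_2$-representation. Hence $\Theta(1_{H_a})|_{\SL_3}\cong (I(s)|_{i_1})^{g}$, the twist of the $i_1$-restriction by the outer automorphism $\mathrm{Ad}(g)$ of $\SL_3$. Twisting by an automorphism preserves filtrations. It therefore suffices to compute the twist of each of the three graded pieces from \S\ref{subs:boundary}.

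We may write $g=d\cdot h$ with $h\in\SL_3$ and $d=\mathrm{diag}(a,1,1)$, or any diagonal matrix of determinant $a$. Inner twists by $h$ are trivial up to isomorphism, so only $d$ matters. Since $d$ normalizes $B$, $T$ and $U$ and fixes $\rho_U$, the twist of $\Ind_B^{\SL_3}(\chi)$ is $\Ind_B^{\SL_3}(\chi^d)$, with $\chi^d=\chi$ for the characters $\rho_U^{c}$.

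For the second piece, $d$ acts on $T$ and hence on the $T$-module $\rho_U^{\frac14(s+\frac12)}\cdot C_c(F^\times)$. In that module $\mathrm{diag}(a',b',c')$ acts by translation by $b'$. This action is unchanged by conjugation with a diagonal matrix, so the piece is preserved. For the Gelfand--Graev piece, $\ind_U^{\SL_3}(\psi_1)^d\cong\ind_U^{\SL_3}(\psi_1\circ\mathrm{Ad}(d))$. Conjugating the Whittaker character by $d$ multiplies its invariant by $\det d=a^{\pm1}$, as noted in Section \ref{S:embed}. This gives $\psi_{a^{\pm1}}$, and since $a^{-1}\equiv a^2$ modulo cubes the sign convention is harmless after relabelling $a$.

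The main obstacle is bookkeeping, not mathematics. One issue is that Proposition \ref{p:SL3toG2} involves the auxiliary isomorphisms $f_a$, which are only defined up to $G_2$-conjugacy. Another is that the embedding $i_1$ fixed in Appendix \ref{S:orbits} need not coincide with the $a=1$ see-saw embedding. It may instead be $i_1$ pre-conjugated by some $g_0\in\GL_3$. This discrepancy is exactly what the constant $b=\det(g_0)^{\pm1}$ absorbs, and it is independent of $a$. I would handle it by fixing, once and for all, an element $g_0$ relating the appendix embedding to the see-saw embedding for $a=1$. I would then compose it with the $g$ of determinant $a$, so that the Gelfand--Graev piece becomes $\ind_U^{\SL_3}(\psi_{ab})$. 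This uses that invariants of Whittaker characters are multiplicative under $\GL_3$-conjugation modulo cubes.
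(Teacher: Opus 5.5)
Your proposal is correct and follows the paper's own (very brief) argument: restrict along $i_a\sim i_1\circ\mathrm{Ad}(g)$ with $\det g=a$ from Proposition \ref{p:SL3toG2}. The two boundary pieces are invariant under $\GL_3$-conjugation, the Gelfand--Graev piece has its invariant multiplied by $\det g$, and $b$ absorbs the discrepancy between the appendix embedding and the $a=1$ see-saw embedding, exactly as the paper remarks after the statement. One small point: your hedge ``$a^{\pm1}$, harmless after relabelling $a$'' is unnecessary, and it would not work as an argument because the same $a$ also indexes $H_a$. With the stated convention $i_a\sim i_1\circ\mathrm{Ad}(g)$, twisting $\ind_U^{\SL_3}(\psi)$ by $\mathrm{Ad}(d)$ gives $\ind_U^{\SL_3}(\psi\circ\mathrm{Ad}(d))$, whose invariant is multiplied by $\det d=a$, so you get $\psi_{ab}$ directly.
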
 
\begin{proof}  
By Proposition  \ref{P:conjugating_H}, the restriction of $\Theta(1_{H_a})$ to $\SL_3$ is obtained from the one for $a=1$ by pre-conjugating by an element of 
$\GL_3$ of determinant $a$. This explains the bottom piece. The top two are invariant under $\GL_3$-conjugation.
  \end{proof} 
The constant $b \in F^\times$ appears because the above computation (the restriction of $I(s)$ to $\SL_3$) uses a fixed embedding of $\SL_3$ to $G_2$, whereas Proposition \ref{P:filtration_theta_1} refers to a (possibly different) embedding resulting from the see-saw diagram. We shall argue, eventually, that $b$ is a cube; see Lemma \ref{L:bcube}.

 \section{Jacquet functors for $\SL_3$} \label{S:JMSL3}
In this section $d$ denotes 1, 2, and 4, for $E_6$, $E_7$, and $E_8$, respectively.
 
Let $P=MN$ be a maximal parabolic in $\SL_3$. We compute the normalized Jacquet functor $r_P(\Pi)$ in each of the three cases, by finding a maximal parabolic $\mathcal P$ in $E$ that intersects $\SL_3$ in $P$.  To that end, we realize $P$ so that $M$ is the centralizer of
\[ 
h=\left(
\begin{array}{ccc}  
-2& 0 & 0  \\ 
0 & 1 & 0  \\ 
0 & 0 & 1 
\end{array} \right) \in \mathfrak{sl}_3, 
\] 
while the Lie algebra of $N$ is the eigenspace of $h$, for the adjoint action, with eigenvalue $3$. The desired $\mathcal P$ (say in the simply connected group) comes similarly from $h$,  considered as an element of  
\[ 
\mathfrak e = \mathfrak{sl}_3 \oplus \mathfrak{g} \oplus  V_3  \otimes J 
  \oplus V_3^{\ast} \otimes  J^{\ast}. 
   \] 
Here, we identify $V_3=V_3^{\ast}$ with the spaces of column $3\times 1$ matrices, the action of $X\in  \mathfrak{sl}_3$ on 
  $V_3^{\ast}$ is by the matrix $-X^{\top}$, negative transpose of $X$.   
  
The Lie algebra of $\mathcal M$ is the centralizer of $h$. Note that 
$\mathcal M^{\rder}= \SL_2 \times G $. The unipotent radical $\mathcal N$ is described using the $\mathbb Z$-grading $\mathfrak e= \oplus_i \mathfrak e_i$ coming from the adjoint action of $h$ on $\mathfrak e$. 
To work out the grading, observe that 
\[ 
 V_3=  V_1^{\ast} \oplus V_2,
\] 
as a $M=\GL_2$-module, 
where $h$ acts on the two summands by eigenvalues -2  and 1, respectively.  Here $V_1^{\ast}$ is a line where $M=\GL_2$  acts by the determinant. 
The subspace $V_2\subset V_3$ is spanned by the vectors $e_2$ and $e_3$ of the standard basis.  
Thus the unipotent radical is a 3-step unipotent subgroup 
\[ 
\mathcal N=\mathcal N_1 \supset \mathcal N_2 \supset \mathcal N_3 \supset \mathcal N_4=0
\] 
such that $\mathcal N_i/\mathcal N_{i+1} \cong \mathfrak e_i$ for $1\leq i\leq 3$.  Each of these subquotients is naturally an $\mathcal M$-module, and here is their 
description, as $\SL_3 \times \SL_5$-modules:  
\[ 
\mathfrak e_1= V_2 \otimes J,  \, \mathfrak e_2= V_1 \otimes  J^{\ast},  \, \mathfrak e_3 = V_2. 
\] 
Observe that $\mathcal N_3=N$. The minimal representation has a $\mathcal P$-filtration 
\[ 
\Pi=\Pi_0 \supset \Pi_1 \supset \Pi_2 \supset \Pi_3 \supset \Pi_4=0 
\] 
such that $\Pi/\Pi_i = \Pi_{\mathcal N_i}$.  Since $\mathcal N_3=N$, the Jacquet module $r_P(\Pi)$ is equal to $\Pi/\Pi_3$ and thus has a filtration with three pieces.

The top is $\Pi/\Pi_1=\Pi_{\mathcal N}$, the Jacquet module with respect to $\mathcal P$. This is an explicit finite-length module for $\GL_2 \times G$. The center of $\GL_2$ acts on the subquotients  of  $r_P(\Pi/\Pi_1)= \rho_P^{-1/2} \cdot (\Pi/\Pi_1)$ (normalized Jacquet functor) by the characters 
\[ 
|\cdot|^{2d+1} \text{ and } |\cdot|^{3d}.
\]
Next, we have $\Pi_1/\Pi_2$. This is a module for the abelian group $\mathcal N/\mathcal N_2 \cong V_2 \otimes J$. The characters $\psi_x$ of this group are parameterized by elements $x$ in $V_2^{\ast} \otimes J^{\ast}$. By minimality of $\Pi$, the fiber
\[ 
(\Pi_1/\Pi_2)_{\mathcal N, \psi_x} 
\] 
is non-zero (in which case it is one-dimensional) if and only if $x$ is in the minimal $\GL_2 \times G$-orbit  $\Omega$ in $V_2^{\ast} \otimes J^{\ast}$. Thus 
\[ 
\Pi_1/\Pi_2 \cong C_c(\Omega) 
\] 
where $\GL_2 \times G$ acts geometrically. The orbit $\Omega$ consists of pure tensors $v\otimes z$ where $v$ is a non-zero element in $V_2^{\ast}$ and 
$z$ is a highest weight vector in $J^{\ast}$, viewed as a $G$ module.  We can pick $v=e_2$ so that the stabilizer, in $\GL_2$, 
of the line through $v$ is the group $B$ of lower-triangular matrices. 
Let $T$ be the torus of diagonal matrices in $B$.
We can identify $J$ and $J^{\ast}$ using the trace form. Using this identification, we can assume that 
\[ 
z=\left(
\begin{array}{ccc}  
1 & 0 & 0  \\ 
0 & 0 & 0  \\ 
0 & 0 & 0 
\end{array} \right) \in J. 
\] 
Let $\bar Q$ be the stabilizer in $G$ of the line through $z\in  J\cong J^*$.  
Let $Q$ be the stabilizer of the line through $z\in J$, for the standard action of $G$ on $J$. Then $L=Q\cap \bar Q$ is the Levi factor of these two parabolic subgroups. To describe $L$, we introduce the group 
\[
\almostGSpin_{2n} = \begin{cases}
    \mathbb{G}_m \times_{\mu_2} \Spin_{2n} \quad \text{(usual } \GSpin_{2n}\text{) } &\text{when $n$ is even};\\
    \mathbb{G}_m \times_{\mu_4} \Spin_{2n} &\text{when $n$ is odd};
\end{cases}
\]
see Appendix \ref{S:mini}. In the three cases that we consider, $L$ is
\[ 
\GL_2\times \GL_2 , \, (\GL_2\times \GL_4)^{\det}, \, \almostGSpin_{10}.
\]
Let $(B\times \bar Q)^{\det}$ be the subgroup fixing $v\otimes z$. 
Then $(B\times \bar Q)^{\det}$ acts on the one-dimensional space $(\Pi_1/\Pi_2 )_{\mathcal N, \psi_{v\otimes z}}$ by a character $\mu$. 
It is trivial on the unipotent subgroups of $B$ and $\bar Q$ since these are contained in 
the commutator subgroup. Hence $\mu$ is a character of $(T\times L)^{\det}$. For ease of exposition, assume for a moment that $\mu=1$. 
Observe that the $(T\times L)$-orbit of $v\otimes z$ is the punctured line $\ell$ through $v\otimes z$ with $0$ removed. Let 
\[ 
C_c(F^{\times}) := \ind_{(T\times L)^{\det}}^{T\times L}(1) \cong C_c(\ell).
\] 
The action of $T \times L$ on this piece gives an obvious correspondence of characters $\chi \leftrightarrow \chi$ of $T$ and $L$. 
Since $\GL_2 \times G$ acts transitively on $\Omega$, by induction in stages we have 
\[ 
\Pi_1/\Pi_2 \cong \ind_{B\times \bar Q}^{\GL_2 \times G} C_c(F^{\times}), 
\] 
where inductions are not normalized. After taking into account $\mu$ and using normalized parabolic induction, and normalized Jacquet functor, we have 
\[ 
r_P(\Pi_1/\Pi_2) \cong \Ind_{B\times \bar Q}^{\GL_2 \times G} \delta \cdot C_c(F^{\times}) 
\] 
where $\delta$ is a character of $T$ given by
\[ 
\left(
\begin{array}{ccc}  
a & 0  \\ 
0 & b 
\end{array} \right) \mapsto |a|^{d/2} \cdot |b|^d 
\] 
and the action of $T\times L$ on $C_c(F^{\times})$ is geometric, as explained before. At the end of this section we work this out in detail (and compute $\delta$) in the $E_6$ case, leaving the details of the $E_7$ and $E_8$ cases to the reader.

\vskip 5pt 

 The bottom term $\Pi_2/\Pi_3$ is a module for the two-step unipotent group $\mathcal N/\mathcal N_3$ with the center $\mathcal N_2/\mathcal N_3 \cong J^*$. 
The characters of $\psi_z$ of the center are parameterized by elements $z$ in $J$  but  the fiber  
\[ 
(\Pi_2/\Pi_3)_{\mathcal N_2, \psi_z}
\] 
is non-zero only for $z$ in the minimal orbit $\Omega \subset J$. In that case it
 is an irreducible representation of $\mathcal N/\mathcal N_3$ with the central character $\psi_z$, described as follows. We shall 
assume that $z$ is given by the above matrix.  
The commutator of the elements in $\mathcal N/\mathcal N_3$  gives an anti-symmetric bilinear form on $\mathcal N/\mathcal N_2 \cong V_2 \otimes J$ with values in 
$\mathcal N_2/\mathcal N_3  \cong J^{\ast}$ given by 
\[ 
[u\otimes x, v \otimes y ] = \langle u,v\rangle \cdot (x\times y) 
\] 
where $\langle u,v\rangle$ is the symplectic form on $V_2$ and $x\times y \in J^{\ast}$ is the cross product of $x,y\in J$, defined by
\[ 
(x\times y)(z)=(x,y,z) 
\] 
where $(x,y,z)$ is the trilinear form on $J$ invariant by $G$. Thus after we evaluate the commutator on $z$, the result is a symplectic form on $V_2\otimes J$. This form is degenerate since the bilinear form $(\cdot, \cdot, z)$ on $J$ is so. Since $L$ fixes the line through $z$, the kernel of this form is a subspace of $J$ invariant under $L$. 
Since $L$ is reductive, the kernel has an $L$-invariant complement $X \subset J$. It consists of elements 
\[ 
x=\left(
\begin{array}{ccc}  
0 & 0 & 0 \\
0 & b & w  \\ 
0 & \bar w & c 
\end{array} \right) \in J. 
\] 
The corresponding value  $(x,x,z)$ is (up to a non-zero constant) $bc-N(w)$, the determinant of the $2\times 2$ block.
Thus $\mathcal N$ has a quotient isomorphic to the Heisenberg group corresponding to the symplectic space $V_2 \otimes X$.  The Heisenberg representation, 
by inflation, gives an irreducible representation $\rho_z$ of $\mathcal N$  with the central character $\psi_z$.  We have 
\[ 
(\Pi_2/\Pi_3)_{\mathcal N_2, \psi_z}\cong \rho_z.
\] 
The stabilizer of $z$ in $\GL_2 \times L$ is $(\GL_2 \times L)^{\det}$. It acts on the above fiber.  The action is via the Weil representation.    
To that end, observe that $L$ is almost $\almostGSpin(X)$.  `Almost', because in the second case, $L/\SL_2\cong \GL_4 \cong \almostGSpin_6$. Thus on 
 \[ 
 W=\ind_{(\GL_2 \times L)^{\det}}^{\GL_2 \times L} (\rho_z) 
 \] 
 we have an action of $\GL_2 \times L$ which is essentially the classical theta correspondence for the groups of similitudes $\GL_2 \times \almostGSpin(X)$, where in the second case, the subgroup $\SL_2$ acts trivially. Putting things together, with normalized induction (see \cite{GS99} and \cite{BS24}), 
\[ 
r_P( \Pi_2/\Pi_3)\cong \Ind_Q^G (W). 
\]

 The following is a summary of the above discussion. 
 
 \begin{thm}   \label{T:three_step}
 The normalized Jacquet functor $r_P(\Pi)$, as a $\GL_2\times G$-module,  has a filtration with three subquotients, listed here from top to bottom, where $d=1,2,4$ in the three cases: 
 \begin{itemize} 
 \item A finite length $\GL_2\times G$-module, such that the center of $\GL_2$ acts on irreducible subquotients by the character $|\cdot|^{2d+1}$ or  $|\cdot|^{3d}$. In the cases $G=\SL_6$ and $E_6$, when $2d+1\neq 3d$, this is a direct sum of two irreducible $\GL_2\times G$-modules, isomorphic to the minimal and the trivial representation as $G$-modules.
 \item  $\Ind_{B\times \bar Q}^{\GL_2 \times G} \delta \cdot C_c(F^{\times})$, where $B$ is the group of lower triangular matrices, and $\delta$ 
 is a character of $T$ equal to $|\cdot|^{d/2}\otimes |\cdot|^d$. 
 \item  $\Ind_Q^G (W)$, where $W$ is the Weil representation of $\GL_2\times \almostGSpin_{2d+2}$.  
 \end{itemize} 
 
 \end{thm}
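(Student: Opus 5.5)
The plan follows the $\mathcal P$-filtration $\Pi=\Pi_0\supset\Pi_1\supset\Pi_2\supset\Pi_3$ set up above. Since $\mathcal N_3=N$, we have $r_P(\Pi)=\rho_P^{-1/2}\cdot \Pi/\Pi_3$. This quotient inherits a three-step filtration with pieces $\Pi/\Pi_1$, $\Pi_1/\Pi_2$, $\Pi_2/\Pi_3$, each stable under $\mathcal M\supset \GL_2\times G$. I will treat the three pieces in turn, mostly by assembling the computations already made in the discussion.

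\textbf{Top piece.} $\Pi/\Pi_1=\Pi_{\mathcal N}$ is the Jacquet module of the minimal representation for the maximal parabolic $\mathcal P$. It is computed in \cite{SW15} (or by the standard Gan--Savin method): it is a finite-length $\mathcal M$-module built from the trivial and minimal representations of $\mathcal M^{\rder}$, twisted by explicit characters of the center. I will restrict these central characters to the center of $\GL_2$, i.e.\ to the one-parameter subgroup generated by $h$, and multiply by $\rho_P^{-1/2}$. This should give the exponents $2d+1$ and $3d$, using that $\dim V_2\otimes J = 2(3d+3)$ and that the weights of $h$ on $\mathfrak e_i$ are determined by $i$.

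When $2d+1\ne 3d$ (that is, $d=2,4$), the two constituents have distinct central characters. They therefore split as a direct sum, and the $G$-types are the minimal and trivial representations, read off from the $\mathcal M^{\rder}=\SL_2\times G$ structure. The $\SL_2$ factor acts trivially on both, so the constituents are irreducible as $\GL_2\times G$-modules.

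\textbf{Middle piece.} By minimality, the Fourier coefficients of $\Pi_1/\Pi_2$ along the abelian quotient $\mathcal N/\mathcal N_2$ are supported on the minimal orbit $\Omega$, with one-dimensional fibres. This gives $\Pi_1/\Pi_2\cong C_c(\Omega)$, twisted by the character $\mu$. Since $\GL_2\times G$ acts transitively on $\Omega$, and the stabilizer of the line through $v\otimes z$ is $B\times\bar Q$, induction in stages gives $\Pi_1/\Pi_2 \cong \ind_{B\times\bar Q}^{\GL_2\times G}(\mu\cdot C_c(F^\times))$.

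The remaining work is bookkeeping of characters. One must convert unnormalized to normalized induction (dividing by $\delta_{B\times\bar Q}^{1/2}$), apply $\rho_P^{-1/2}$, and compute $\mu$. For $\mu$, I would evaluate the action of $(T\times L)^{\det}$ on the fibre via the minimal representation's known model on $\mathcal P$, as in the $E_6$ computation promised at the end of the section. The net result should be $\delta=|\cdot|^{d/2}\otimes|\cdot|^d$ on $T$, with the $L$-twist absorbed into the geometric action on $C_c(F^\times)$.

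\textbf{Bottom piece.} The central characters of $\mathcal N_2/\mathcal N_3\cong J^*$ occurring in $\Pi_2/\Pi_3$ lie in the minimal orbit $\Omega\subset J$. At $z$, the fibre is the Heisenberg representation $\rho_z$ of the quotient Heisenberg group on $V_2\otimes X$. The stabilizer $(\GL_2\times L)^{\det}$ acts on this fibre through the Weil representation of the similitude pair $\GL_2\times{}^*\GSpin(X)$, with $\dim X=2d+2$.

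Inducing over the orbit of $z$, whose stabilizer line-wise is $Q$, gives $\Ind_Q^G(W)$ after normalization; this is the argument of \cite{GS99, BS24}.

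\textbf{Main obstacle.} The geometric structure is already in place, so the hard step is the exact normalization: pinning down $\mu$, the exponents $2d+1,3d$, and $\delta$ uniformly in $d$. I would do this first for $E_6$ by explicit weight computation, then for $E_7$ and $E_8$ by checking that modular characters scale linearly with $\dim J=3d+3$.
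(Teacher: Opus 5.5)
Your architecture is the paper's: the three-step $\mathcal P$-filtration with $r_P(\Pi)=\rho_P^{-1/2}\cdot\Pi/\Pi_3$. The pieces are $\Pi_{\mathcal N}$ on top, $C_c(\Omega)\cong \ind_{B\times\bar Q}^{\GL_2\times G}(\mu\cdot C_c(F^\times))$ in the middle, and at the bottom the Heisenberg fibre $\rho_z$ with the Weil-representation action of $(\GL_2\times L)^{\det}$, induced from $Q$.

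\textbf{The top piece.} Your claim there is false: the $\SL_2\subset\GL_2$ does \emph{not} act trivially on both constituents. For $d=2,4$ the top piece is
\[
|\det|^{d+\frac12}\otimes\pi_{\min}\ \oplus\ \tau\otimes 1_G,\qquad \tau=|\cdot|^d\times|\cdot|^{2d}.
\]
Here $\tau$ is an infinite-dimensional principal series of $\GL_2$, as the paper records when it lifts $1_G$ and $\pi_{\min}$. This is why $\Theta(1_G)$ maps onto the irreducible principal series of $\SL_3$ with infinitesimal character $(d,0,-d)$. A character of $\GL_2$ in place of $\tau$ would produce a degenerate series, whose infinitesimal character has two entries differing by $1$.

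So the top is not ``built from the trivial and minimal representations of $\mathcal M^{\rder}$''. The trivial representation of $\SL_2\times G$ does not occur, and the $1_G$-constituent is instead minimal on the $\SL_2$ factor. The conclusion of the theorem survives, but not for your reason. Irreducibility of the $1_G$-constituent comes from irreducibility of $\tau$, which uses $|\cdot|^{-d}\neq|\cdot|^{\pm1}$, i.e.\ $d\neq1$. Under your ansatz the exponent bookkeeping would mislabel this constituent.

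\textbf{The normalization $\delta$.} Here your route differs from the paper's. You propose computing $\mu$ directly from ``the known model of $\Pi$ on $\mathcal P$''. But the explicit models in play (Magaard--Savin) are for the Siegel and Heisenberg parabolics. For this three-step $\mathcal P$ you would have to determine the stabilizer action on a twisted Jacquet module along a non-abelian radical, and your plan does not say how.

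The paper avoids this. In the $E_6$ case it writes the unknown twist on $T$ as $|\cdot|^s\otimes|\cdot|^t$. It then uses the bottom piece alone, via Frobenius reciprocity, to get $\theta(\Ind_P(\chi))\cong\Ind_P(\chi)\otimes\Ind_P(\chi)$ for generic $\chi$. Next it notes that $r_P(\Ind_P(\chi))$ contains $|\cdot|^{1/2}\chi^{-1}\times|\cdot|$. Hence $\bigl(|\cdot|^{1/2}\chi^{-1}\times|\cdot|\bigr)\otimes\Ind_P(\chi)\otimes\Ind_P(\chi)$ is a quotient of $r_P(\Pi)$ that can only come from the middle piece, which forces $s=1/2$ and $t=1$.

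This bootstrap uses only what the filtration already gives. Your ``as in the $E_6$ computation'' anticipates a direct weight computation that the paper does not do. For $E_7$ and $E_8$ the paper leaves the details to the reader, while your extension relies on the heuristic that modular characters scale linearly in $\dim J$, which is not by itself an argument.
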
 
 
 
 \begin{cor} Let $\pi$ be an irreducible representation of $G$. Then $r_P(\Theta(\pi))$ is a finite-length $\GL_2$-module.  
 \end{cor}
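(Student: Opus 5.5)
The plan is to combine exactness of the Jacquet functor with the three-step filtration of Theorem \ref{T:three_step}. Since $r_P$ is exact and commutes with taking $G$-coinvariants (the $\SL_3$-action on $\Pi$ commutes with the $G$-action, and coinvariants for $N\subset\SL_3$ and for $G$ can be interchanged), we get
\[
r_P(\Theta(\pi)) = r_P\bigl((\Pi\otimes\pi^{\vee})_G\bigr)\cong \bigl(r_P(\Pi)\otimes\pi^{\vee}\bigr)_G .
\]
The functor $\Sigma\mapsto(\Sigma\otimes\pi^{\vee})_G$ is right exact. So $(r_P(\Pi)\otimes\pi^\vee)_G$ has a filtration whose successive subquotients are quotients of $(\Sigma_i\otimes\pi^\vee)_G$, where $\Sigma_1,\Sigma_2,\Sigma_3$ are the three pieces in Theorem \ref{T:three_step}. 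It therefore suffices to show that each $(\Sigma_i\otimes\pi^\vee)_G$ is a finite-length $\GL_2$-module.

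For $\Sigma_1$ this is immediate. It is a finite-length $\GL_2\times G$-module, hence a finite sum of irreducibles $\tau\otimes\pi'$, and $(\pi'\otimes\pi^\vee)_G$ is at most one-dimensional.

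For $\Sigma_2$ and $\Sigma_3$ I would use Frobenius reciprocity (second adjointness / Bernstein's form) for induction from $\bar Q$ or $Q$ to $G$. This turns the $G$-coinvariants into coinvariants of the Levi $L$ on a Jacquet module of $\pi^\vee$. For example, for $\Sigma_2$ we get
\[
\bigl(\Ind_{B\times\bar Q}^{\GL_2\times G}(\delta\cdot C_c(F^\times))\otimes\pi^\vee\bigr)_G \cong \Ind_B^{\GL_2}\Bigl(\delta\cdot\bigl(C_c(F^\times)\otimes r_{Q}(\pi^\vee)\bigr)_{L}\Bigr),
\]
where $r_Q$ is the Jacquet module for the opposite parabolic, which has finite length. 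Here $C_c(F^\times)=\ind_{(T\times L)^{\det}}^{T\times L}1$, so $(C_c(F^\times)\otimes\sigma)_L$ is computed by restricting the finitely many irreducible subquotients $\sigma$ of $r_Q(\pi^\vee)$ to $L^{\det}$, the kernel of the relevant character. Each contributes a finite-dimensional $T$-module, namely characters matching the central characters. Right exactness then makes this a finite-dimensional $T$-module, and parabolic induction to $\GL_2$ preserves finite length.

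For $\Sigma_3$ the same reciprocity gives $(W\otimes r_{\bar Q}(\pi^\vee))_L$. For each irreducible subquotient $\sigma$ of the finite-length module $r_{\bar Q}(\pi^\vee)$, the space $(W\otimes\sigma)_{L}$ is the big theta lift of $\sigma$ from the similitude group $\almostGSpin_{2d+2}$ to $\GL_2$ in the classical similitude correspondence of Appendix \ref{S:mini}. In the $\SL_6$ case the $\SL_2$ factor acts trivially, so we first take $\SL_2$-coinvariants. I expect this to be the main obstacle: one needs that big theta lifts in this similitude correspondence (a type II / small orthogonal–$\GL_2$ pair) have finite length. I would invoke the standard finiteness result for big theta lifts (Kudla), adapted to similitudes by restricting to the isometry groups $\SL_2\times\Spin_{2d+2}$, where finitely many irreducibles appear in restriction. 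Again right exactness along a composition series of $r_{\bar Q}(\pi^\vee)$ gives finite length for the whole term.

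Combining the three pieces, $r_P(\Theta(\pi))$ is a finite-length $\GL_2$-module.
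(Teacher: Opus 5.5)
Your proposal is correct and follows the paper's proof: the identity $r_P(\Theta(\pi))\cong(r_P(\Pi)\otimes\pi^\vee)_G$ together with the three-step filtration of Theorem \ref{T:three_step}. The paper simply declares the finiteness of each piece ``clear'', and you supply the details (right exactness, Frobenius reciprocity, and Kudla-type finiteness for the similitude Weil representation $W$). One harmless slip: the coinvariant form of Frobenius reciprocity uses the same parabolic, $(\Ind_{\bar Q}^G X\otimes\pi^\vee)_G\cong (X\otimes r_{\bar Q}(\pi^\vee))_L$, so your $r_Q$ and $r_{\bar Q}$ are interchanged in the $\Sigma_2$ and $\Sigma_3$ formulas; since both Jacquet modules have finite length, this does not affect the argument.
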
 
 \begin{proof}  We have 
 \[ 
 r_P(\Theta(\pi))=(r_P(\Pi)\otimes \pi^{\vee})_G.
 \] 
 Now we use the filtration; for each of the three pieces the finiteness is clear. 
 \end{proof}

To end the section, we give some details in the $E_6$ case.  Let $V_3=F^3$ be the standard representation of $\SL_3$, realized as the space of column vectors. 
 Let $V_3^{\ast}=F^3$ be the dual representation: $g$ in $\SL_3$ acts on it by precomposing with transpose and inverse. Then we have ($S_3$ symmetry is visible) 
   \[ 
\mathfrak e = \mathfrak{sl}_3 \oplus \mathfrak{sl}_3 \oplus \mathfrak{sl}_3  \oplus V_3 \otimes V_3 \otimes V_3 
  \oplus V_3^{\ast} \otimes V_3^{\ast} \otimes V_3^{\ast}. 
\] 

Write $J=V_3\otimes V_3$ (the second two factors in the triple tensor product). 
Then $V_3\otimes V_3 \cong M_3$, the space of $3$ by $3$ matrices, via the isomorphism  $x\otimes y \mapsto x\cdot y^{\top}$.  
Then $(g,h)\in \SL_3 \times \SL_3$ acts on $X\in M_3$ by $gXh^{\top}$.  Similarly, we let  $J^{\ast} =V_3^{\ast}\otimes V_3^{\ast}$.  
Now $J^{\ast}\cong M_3$, but the action of $ \SL_3 \times \SL_3$ is precomposed by the transpose-inverse automorphism on 
each $\SL_3$ factor. 

We now discuss the geometry related to the Weil representation $W$ that appears at the bottom of the filtration. The stabilizer $Q$ in $G= \SL_3 \times \SL_3$ of the line through $z$ is $\bar P \times \bar P$.  The Levi factor $L$ of $Q$ 
is $M\times M = \GL_2 \times \GL_2$ and the orthogonal space $X\subset M_3$ is the space $M_2$ of $2\times 2$ matrices embedded in $M_3$ 
as a block.  Thus $X\cong V_2 \otimes V_2$.  

Observe that 
\[ 
\alpha \times 
\left( \begin{array}{cc}  
\det(\beta)^{-1} & 0   \\
0 & \beta
\end{array} \right) 
 \times 
\left( \begin{array}{cc}  
\det(\gamma)^{-1} & 0   \\
0 & \gamma
\end{array} \right) \in \GL_2 \times M \times M 
\] 
acts on $e_1\otimes z\in V_1^{\ast} \otimes J$ 
 by $\det(\alpha\gamma\beta)^{-1}$. Hence 
 \[ 
 (\GL_2 \times L)^{\det}= (\GL_2 \times \GL_2 \times \GL_2)^{\det}. 
 \] 
 This group acts naturally on the symplectic space $V_2 \otimes V_2 \otimes V_2$. 
The similitude group correspondence on $W$ works as follows \cite{GS23,HS25}: if $\tau$ is an irreducible Whittaker-generic representation of $\GL_2$, then
\[ 
\Theta(\tau) =\tau\otimes \tau  \text{ and } \Theta(\tau\otimes \tau)= \tau. 
\]
It remains to discuss the middle part of the filtration, namely $\Omega \subset V_2^{\ast} \otimes J^*$. The stabilizer of the line $z$, viewed as an element of $J^{\ast}$, is $\bar Q= P  \times P$. The stabilizer of $e_2\otimes z\in V_2^{\ast} \otimes J^{\ast}$ in $ T \times M \times M$ consists of all elements 
\[ 
\left( \begin{array}{cc}  
a & 0   \\
0 & b
\end{array} \right) 
 \times 
\left( \begin{array}{cc}  
\det(\beta)^{-1} & 0   \\
0 & \beta
\end{array} \right) 
 \times 
\left( \begin{array}{cc}  
\det(\gamma)^{-1} & 0   \\
0 & \gamma
\end{array} \right) \in T \times M \times M 
\] 
and acts on $e_2\otimes z\in V_2^{\ast} \otimes J^{\ast}$  by $a^{-1} \det(\gamma\beta)$. Hence in this case the action 
of $T \times M \times M$ on $f\in C_c(F^{\times})$ is by left translation by $a^{-1}$ and right translation by $ \det(\gamma\beta)$,  
with a twist by a character on $T$ given by $|\cdot |^s \times |\cdot |^t$ for two real numbers that we need to compute.  Now, by the 
Frobenius reciprocity, for a generic character $\chi$ of $F^{\times}$, the bottom piece of the filtration implies that 
\[ 
\theta(\Ind_P(\chi))= \Ind_{\bar P}(\chi) \otimes \Ind_{\bar P}(\chi) \cong  \Ind_{P}(\chi) \otimes \Ind_{P}(\chi). 
\] 
Next, we use that $r_P(\Ind_P(\chi))$ is a direct sum of $\tau=|\cdot |^{1/2}\chi^{-1}\times |\cdot |$ and a one-dimensional $\GL_2$-module. 
$\chi\circ \det$. Thus both 
\[ 
\tau \otimes \Ind_{P}(\chi) \otimes \Ind_{P}(\chi) \text{ and }   (\chi\circ \det) \otimes \Ind_{P}(\chi) \otimes \Ind_{P}(\chi) 
\] 
are quotients of $r_P(\Pi)$, where the former can only be the quotient of the middle term in the filtration. 
This forces $s=1/2$ and $t=1$.

In the above we used that, if $\tau = \chi\times \mu$, then $r_P(\Ind_P(\tau))$ has a composition series consisting of 
\[  
\chi\times \mu , \,  \chi^{-1} \times (\mu/\chi), \, (\chi/\mu) \times \mu^{-1}.  
\]

\section{Lifts for non-cuspidal representations}\label{S:noncusp}

In this section we derive several consequences of Theorem \ref{T:three_step},  and all but completely determine the correspondence for non-cuspidal representations of $\SL_3$. We retain the notation from the previous section; in particular, $P=MN$ is a maximal parabolic in $\SL_3$, and $Q$ (with Levi $L$) is the parabolic in $G$ that appears in Theorem \ref{T:three_step}.

\subsection{Finiteness of lift to $\SL_3$}  

\begin{prop} Let $\pi$ be an irreducible representation of $G$. Then $\Theta(\pi)$ has finite length. 
\end{prop}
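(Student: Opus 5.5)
Here $\Theta(\pi)=(\Pi\otimes\pi^{\vee})_G$ is a smooth $\SL_3$-module, and I would prove it has finite length by the standard criterion (Bernstein / Howe): a smooth representation of a reductive $p$-adic group has finite length if it is finitely generated and all of its Jacquet modules are admissible. It is convenient to use the equivalent form of the criterion: the module is admissible, and its cuspidal part is a finite sum of cuspidal representations with finite multiplicities. The plan has two parts. The non-cuspidal part comes from the preceding Corollary, and the cuspidal part is handled separately.

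\emph{Non-cuspidal part.} By the Corollary after Theorem \ref{T:three_step}, $r_P(\Theta(\pi))$ is a finite-length $\GL_2$-module for each maximal parabolic $P$ of $\SL_3$; the two maximal parabolics are exchanged by an outer automorphism, and the same computation applies to both. Taking a further Jacquet module, $r_B(\Theta(\pi))$ is a finite-length $T$-module. So the part of $\Theta(\pi)$ supported on proper Levi subgroups lies in finitely many Bernstein components. Inside each such component it has finite length: by second adjointness and Frobenius reciprocity, finite-dimensionality of the Jacquet modules bounds the length of the non-cuspidal components.

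\emph{Cuspidal part.} Let $\sigma$ be an irreducible cuspidal representation of $\SL_3$. I need to show two things: $\Hom_{\SL_3}(\Theta(\pi),\sigma)$ is finite-dimensional, and it is nonzero for only finitely many $\sigma$.

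For finiteness of multiplicity, I would run the see-saw in the other direction. We have
\[
\Hom_{\SL_3}(\Theta(\pi),\sigma)\cong\Hom_{\SL_3\times G}(\Pi,\sigma\otimes\pi)\subseteq\Hom_G(\Theta(\sigma^{\vee}),\pi).
\]
The space on the right is controlled by $\Hom_U(\sigma,\psi)$ together with the see-saw identity $\Pi_{U,\psi_a}\cong C_c(G/H_a)$. Concretely, since $\sigma$ is cuspidal it is generic for exactly one class of Whittaker character $\psi_a$, with multiplicity one. Cuspidality also makes $\sigma$ a direct summand of the Gelfand--Graev representation $\ind_U^{\SL_3}(\psi_a)$. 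Hence
\[
\Hom_{\SL_3}(\Theta(\pi),\sigma)\hookrightarrow\Hom_U(\Theta(\pi),\psi_a)=\Hom_{H_a}(\pi^{\vee},\mathbb C),
\]
where the last equality is the standard see-saw identity from the introduction. The space $\Hom_{H_a}(\pi^{\vee},\mathbb C)$ is finite-dimensional because $G/H_a$ is a symmetric space (Delorme; Sakellaridis--Venkatesh finiteness).

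For finiteness of the set of cuspidal $\sigma$, the argument must also control how many different $\sigma$ occur. I would realize $\Theta(\pi)_{U,\psi_a}\cong(\pi^{\vee})_{H_a}^{*}$-type data as finite-dimensional for each of the finitely many classes $a\in F^\times/(F^\times)^3$. Every cuspidal quotient $\sigma$ of $\Theta(\pi)$ is generic, and, being projective in the category of smooth representations with central character, it splits off as a direct summand. Therefore
\[
\sum_{\sigma\ \text{cuspidal}}\dim\Hom(\Theta(\pi),\sigma)\le\sum_a\dim\Theta(\pi)_{U,\psi_a}<\infty.
\]

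\emph{Main obstacle.} The hardest step is justifying this splitting: that the cuspidal part of $\Theta(\pi)$ is a direct summand. This holds because cuspidal Bernstein components split off from the category of smooth representations.

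For the global finite generation needed in the criterion, I would instead argue that the cuspidal components form a finite set and the remaining components involve only finitely many supports. Each Bernstein projection of $\Theta(\pi)$ then has finite length, and only finitely many projections are nonzero, which gives finite length of $\Theta(\pi)$.
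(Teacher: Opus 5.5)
Your proposal is correct and follows essentially the same route as the paper: the see-saw identity $\Hom_U(\Theta(\pi),\psi_a)\cong\Hom_G(C_c(G/H_a),\pi)$, combined with finiteness for the symmetric space $G/H_a$, controls the (necessarily generic) cuspidal part, while finite length of $r_P(\Theta(\pi))$ from the filtration of $r_P(\Pi)$ in Theorem~\ref{T:three_step} controls the rest, and your Bernstein-decomposition bookkeeping only spells out what the paper leaves implicit. One harmless slip is that a cuspidal $\sigma$ need not be generic for exactly one class $\psi_a$ (an irreducible restriction of a cuspidal representation of $\GL_3$ is generic for every class); your argument only needs one such class, and the detour through $\Theta(\sigma^{\vee})$ is unnecessary.
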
 
\begin{proof} 
Fix a Whittaker datum $(U,\psi)$ for $\SL_3$, we have the see-saw identity 
\[ 
\Hom_U(\Theta(\pi), \psi) \cong \Hom_G(C_c(G/H), \pi). 
\] 
Since $H$ is a symmetric group, $\Hom_G(C_c(G/H), \pi)$ is finite-dimensional; see \cite{De10} and \cite{SV17}. Hence $\Theta(\pi)_{U,\psi}$ is 
finite-dimensional for any choice of the Whittaker datum $(U,\psi)$. 

This takes care of the supercuspidal part of $\Theta(\pi)$. To finish, we need to show that $r_P(\Theta(\pi))$ is a finite-length $M$-module for any maximal parabolic $P=MN$ of $\SL_3$. To that end, 
\[ 
r_P(\Theta(\pi))=(r_P(\Pi)\otimes \pi^{\vee})_G.
\] 
Now we use the filtration of $r_P(\Pi)$ from Theorem \ref{T:three_step}.  For each of the three pieces the finiteness is clear. 
\end{proof} 

\subsection{Lifting principal series representations of $\SL_3$}   
\begin{prop} \label{P:theta_nc} 
Let $\tau$ be an irreducible representation of $\GL_2$ such that the center of $\GL_2$ does not act as $|\cdot|^{2d+1}$ or $|\cdot|^{3d}$, and $r_{\bar B}(\tau)$ does not contain a character $\chi\otimes |\cdot |^d$. Let $\Theta_{\mini}(\tau)$ be the mini-theta lift to $L$; see Appendix \ref{S:mini}.  Then 
\[  
\Theta(\Ind_P^{\SL_3}(\tau))= \Ind_Q^G(\Theta_{\mini}(\tau)). 
\] 
More generally, if $\sigma$ is a submodule of $\Ind_P^{\SL_3}(\tau)$, then $\Theta(\sigma)$ is a quotient of $\Ind_Q^G(\Theta_{\mini}(\tau))$. 
\end{prop}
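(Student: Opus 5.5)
The plan is to compute $\Theta(\sigma)$ for $\sigma\subseteq\Ind_P^{\SL_3}(\tau)$ by Frobenius reciprocity and the filtration of $r_P(\Pi)$ in Theorem \ref{T:three_step}. It is cleaner to work with Hom spaces. For any smooth representation $\pi$ of $G$,
\[
\Hom_{\SL_3\times G}(\Pi,\Ind_P^{\SL_3}(\tau)\otimes\pi)\cong \Hom_{\GL_2\times G}(r_P(\Pi),\tau\otimes\pi),
\]
which follows from the second adjointness / Frobenius reciprocity applied in the $\SL_3$ variable. The target is the assertion that the right-hand side equals $\Hom_{\GL_2\times G}(r_P(\Pi_2/\Pi_3),\tau\otimes\pi)$. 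In other words, the top two pieces of the filtration contribute nothing and have no $\Ext^1$ interference.

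\textbf{Step 1: kill the top piece.} The top subquotient is a finite length module on which the center of $\GL_2$ acts by $|\cdot|^{2d+1}$ or $|\cdot|^{3d}$. By hypothesis $\tau$ has a different central character. Decomposing by generalized central characters of $Z(\GL_2)$ splits this piece off as a direct summand in the category. Hence both $\Hom$ and $\Ext^1$ into $\tau\otimes\pi$ vanish.

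\textbf{Step 2: kill the middle piece.} The middle subquotient is $\Ind_{B\times\bar Q}^{\GL_2\times G}\delta\cdot C_c(F^\times)$. Applying Frobenius reciprocity in the $\GL_2$ variable gives a $\Hom$ (and $\Ext$) from $\delta\cdot C_c(F^\times)$ into $r_{\bar B}(\tau)\otimes r_{\bar Q}(\pi)$ as $T\times L$-modules. The torus $T$ acts on $C_c(F^\times)$ through translations tied to $L$, with the second coordinate twisted by $|\cdot|^d$. So any nonzero map forces $r_{\bar B}(\tau)$ to contain a character of the form $\chi\otimes|\cdot|^d$, which is excluded. The component of the center of $\GL_2$, or the second diagonal entry, separates generalized eigencharacters, so again $\Ext^1$ vanishes as well. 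This is the main obstacle. One must check carefully that the second coordinate of $T$ acts on $C_c(F^\times)$ only through the fixed twist $|\cdot|^d$ (up to the geometric action absorbed by $L$). One must also check that the excluded condition exactly rules out the extension classes. I would first do the $E_6$ computation, where $\delta$ was determined explicitly, and then argue uniformly.

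\textbf{Step 3: the bottom piece.} With Steps 1 and 2, the long exact sequences give
\[
\Hom(r_P(\Pi),\tau\otimes\pi)\cong\Hom(\Ind_Q^G(W),\tau\otimes\pi).
\]
By Frobenius reciprocity for $\Ind_Q^G$ (in the form adapted to $Q$ versus $\bar Q$, using second adjointness as needed) this equals $\Hom_{\GL_2\times L}(W,\tau\otimes r(\pi))$, which is $\Hom_L(\Theta_{\mini}(\tau),r(\pi))$ by the definition of the mini-theta lift in Appendix \ref{S:mini}. Folding back gives $\Hom_G(\Ind_Q^G\Theta_{\mini}(\tau),\pi)$. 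Since this holds functorially in $\pi$, Yoneda yields $\Theta(\Ind_P^{\SL_3}(\tau))\cong\Ind_Q^G(\Theta_{\mini}(\tau))$. Here I am taking $\Theta$ in the sense of the maximal quotient $(\Pi\otimes\Ind_P^{\SL_3}(\tau)^\vee)_{\SL_3}$, which is dualized appropriately, with $\tau$ replaced by its contragredient where needed.

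Finally, let $\sigma\subseteq\Ind_P^{\SL_3}(\tau)$. Every $\SL_3\times G$-map $\Pi\to\sigma\otimes\pi$ composes to a map into $\Ind_P^{\SL_3}(\tau)\otimes\pi$, and injectivity is preserved. Hence $\Hom_G(\Theta(\sigma),\pi)\hookrightarrow\Hom_G(\Ind_Q^G\Theta_{\mini}(\tau),\pi)$ naturally. Applying this to the universal maps gives a surjection $\Ind_Q^G(\Theta_{\mini}(\tau))\twoheadrightarrow\Theta(\sigma)$. Concretely, the image of $\Pi$ in $\Ind_P^{\SL_3}(\tau)\otimes\Theta(\sigma)$ factors through the bottom piece, as above.
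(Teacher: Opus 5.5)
Your proposal is correct and follows the paper's proof. Frobenius reciprocity reduces $\Theta(\Ind_P^{\SL_3}(\tau))$ to $r_P(\Pi)$ paired against $\tau$ (the paper writes this as $(r_P(\Pi)\otimes\tau^{\vee})_{\GL_2}$), the hypotheses on $\tau$ kill the top two pieces of Theorem \ref{T:three_step}, and what remains is $\Ind_Q^G(W)$, hence $\Ind_Q^G(\Theta_{\mini}(\tau))$. Your Hom/$\Ext^1$ bookkeeping makes explicit the vanishing that the paper's phrase ``avoid the top two pieces'' leaves implicit: the central character of $\GL_2$ handles the top piece, and $\mathrm{diag}(1,b)$, which acts on the middle piece by the scalar $|b|^d$, handles the middle piece; the center of $\GL_2$ would not do there. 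Two minor slips: the first isomorphism is ordinary Frobenius reciprocity, not second adjointness, and in Step 2 the $G$-side Jacquet module should be $r_Q(\pi)$ rather than $r_{\bar Q}(\pi)$.
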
 
\begin{proof}  By Frobenius reciprocity,  
  \[ 
  (\Pi\otimes \Ind_P^{\SL_3}(\tau)^{\vee})_{\SL_3} \cong (r_P(\Pi)\otimes \tau^{\vee})_{\GL_2}. 
  \] 
  Now, the conditions on $\tau$ ensure that we avoid the top two pieces in the filtration of $r_{P}(\Pi)$.
  \end{proof} 

  Let us list cases where the conditions of the above proposition are satisfied: 
  \begin{itemize} 
  \item $\tau=\chi\otimes |\det|^s$ where $s\leq 0$ and $\chi$ is a unitary character of $\GL_2$;
  \item $\tau=\text{St}_{\chi}\otimes |\det|^s$ where $s\leq 0$, and $\text{St}_{\chi}$ is the Steinberg representation twisted by the unitary character $\chi$;
  \item $\tau$ is a principal series induced from $\chi\times \mu$ where $|\chi|=|\cdot |^a$ and $|\mu|=|\cdot|^b$ where $a,b\leq 0$; 
  \item $\tau$ is a supercuspidal representation;
  \item $\tau$ is a unitarizable representation. 
  \end{itemize} 

  For example, let us verify the condition for $\tau$ in the first two bullets. We set $\chi=1$ to simplify notation. If $\tau=|\det|^s$ then its central character is $|\cdot|^{2s}$ and $r_{\bar B} (\tau)  = |\cdot |^{-1/2+s} \otimes |\cdot|^{1/2+s}$. 
 If $\tau=|\det|^{s}\otimes \text{St}$ then its central character is $|\cdot|^{2s}$ and $r_{\bar B} (\tau)  = |\cdot |^{1/2+s} \otimes |\cdot|^{-1/2-s}$. 
Since $s\leq 0$ the conditions of Proposition \ref{P:theta_nc} are satisfied.

Any non-square-integrable $\sigma$ can be embedded into 
$\Ind_P^{\SL_3}(\tau)$ with $\tau$ satisfying one of the first three bullets, or in $\Ind_{\bar P}^{\SL_3}(\tau)$ with $\tau$ satisfying opposite conditions, namely, $s\leq 0$ and $a,b\leq 0$.  Now, there is a sister 
statement to Theorem \ref{T:three_step} which involves $r_{\bar P}(\Pi)$. 
Thus $\Theta(\sigma)$ is, respectively a quotient of 
\[ 
\Ind_Q^G(\Theta_{\mini}(\tau)) \text{ or } \Ind_{\bar Q}^G(\Theta_{\mini}(\tau)).
\]
If $\sigma$ is non-tempered, then it is equal to the induced representation, with three exceptions: the trivial representation, and two degenerate representations with the same infinitesimal character. We shall analyze the lift of the trivial representation in a moment; before that, we record the following two easy consequences that we shall need later.


   \vskip 5pt 
\begin{cor} \label{P:small_e6}  Let $\sigma$ be an irreducible and degenerate representation of $\SL_3$. Then the small theta lift to $\SL_3\times\SL_3$ is 
$\theta(\sigma)\cong \sigma\otimes\sigma$.   
\end{cor}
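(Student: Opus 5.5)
The plan is to reduce to Proposition \ref{P:theta_nc} and its sister statement for $\bar P$, and then identify the unique irreducible quotient of the resulting induced representation. For $G=\SL_3\times\SL_3$ we have $d=1$, $L=\GL_2\times\GL_2$, and the relevant parabolic of $G$ is $Q=\bar P\times\bar P$. The Weil representation $W$ is the similitude correspondence for $(\GL_2\times\GL_2\times\GL_2)^{\det}$ recalled at the end of Section \ref{S:JMSL3}. From Appendix \ref{S:mini} I expect the mini-theta lift of a one-dimensional $\tau=\chi\circ\det$ to be $\Theta_{\mini}(\tau)=(\chi\circ\det)\otimes(\chi\circ\det)$, mirroring $\Theta(\tau)=\tau\otimes\tau$ for generic $\tau$. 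I would check this first.

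Next I classify the irreducible degenerate (non-generic) $\sigma$. Each is a constituent of a degenerate principal series $\Ind_P^{\SL_3}(\chi|\det|^s)$ with $\chi$ a character; this includes the trivial representation and the two degenerate representations mentioned just above the statement. Using the Langlands classification, and passing to the opposite parabolic if needed, write $\sigma$ as the unique irreducible submodule of $\Ind_P^{\SL_3}(\chi|\det|^s)$ with $s\le0$, or of $\Ind_{\bar P}^{\SL_3}(\chi|\det|^{s})$ with the opposite condition. The first bullet of the list after Proposition \ref{P:theta_nc} shows its hypotheses hold. Hence
\[
\Theta(\sigma)\ \text{is a quotient of}\ \Ind_{\bar P}^{\SL_3}(\chi|\det|^s)\otimes \Ind_{\bar P}^{\SL_3}(\chi|\det|^s).
\]
The standard long intertwining operator identifies the unique irreducible quotient of $\Ind_{\bar P}^{\SL_3}(\chi|\det|^s)$ with the unique irreducible submodule of $\Ind_P^{\SL_3}(\chi|\det|^s)$, namely $\sigma$. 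This holds at least when $s<0$; the case $s=0$, where the induced representation may be a direct sum, needs separate bookkeeping. Granting it, the tensor product has $\sigma\otimes\sigma$ as its unique irreducible quotient, so $\theta(\sigma)\subseteq\sigma\otimes\sigma$ once $\Theta(\sigma)\neq0$.

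It remains to show $\Theta(\sigma)\neq0$, and I expect this, together with the trivial and tempered-boundary cases, to be the main obstacle. For nonvanishing I would use the $S_3$-symmetry of the $E_6$ grading, under which the three $\SL_3$ factors play identical roles. Alternatively, I would produce a nonzero element of $\Hom_{\GL_2}(r_P(\Pi),\tau\otimes(\text{the quotient}))$ from the bottom piece $\Ind_Q^G(W)$ of Theorem \ref{T:three_step}, by Frobenius reciprocity as in the computation at the end of Section \ref{S:JMSL3}.

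For $\sigma=1$ (and $s=0$ with $\chi$ of order dividing two) the top piece of the filtration may intervene, so Proposition \ref{P:theta_nc} does not apply directly. Here I would compute $\Theta(1)=\Pi_{\SL_3}$ directly from the top piece, whose $G$-constituents are the minimal and trivial representations. The aim is to show that its semisimple quotient is $1\otimes1$, which is again consistent with the $S_3$-symmetry.
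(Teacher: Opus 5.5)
Your reduction step has a genuine gap. It fails exactly at the two degenerate representations $\sigma_2,\sigma'_2$ that you say are included.

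With only one-dimensional inducing data, these two cannot be placed in the range where Proposition \ref{P:theta_nc} (or its $\bar P$ sister) applies. Their cuspidal support forces $\chi=1$ and $s=\pm 3/2$, and the socles are as follows:
\begin{itemize}
\item $\Ind_{\bar P}^{\SL_3}(|\det|^{3/2})$ and $\Ind_{P}^{\SL_3}(|\det|^{-3/2})$ have socle $1$;
\item $\Ind_{\bar P}^{\SL_3}(|\det|^{-3/2})$ has socle $\sigma'_2$;
\item $\Ind_P^{\SL_3}(|\det|^{3/2})$ has socle $\sigma_2$.
\end{itemize}
So the only available embedding of $\sigma_2$ is into $\Ind_P^{\SL_3}(\tau)$ with $\tau=|\det|^{3/2}$. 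Its central character is $|\cdot|^{3}$, and for $d=1$ this is exactly $|\cdot|^{2d+1}=|\cdot|^{3d}$, the exponent of the top piece $\Pi_{\mathcal N}$ in Theorem \ref{T:three_step}. You therefore cannot discard that piece, nor conclude that $\Theta(\sigma_2)$ comes from $\Ind_Q^G(W)$. The same happens for $\sigma'_2$ with $P$ and $\bar P$ interchanged.

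The missing idea is to use the Langlands data of $\sigma_2$, which involve a twisted Steinberg representation. Take $\tau=\St\otimes|\det|^{-3/2}$. Then $\sigma_2$ is a submodule of $\Ind_P^{\SL_3}(\tau)$ and the unique irreducible quotient of $\Ind_{\bar P}^{\SL_3}(\tau)$. This $\tau$ has central character $|\cdot|^{-3}$ and $r_{\bar B}(\tau)=|\cdot|^{-1}\otimes|\cdot|^{-2}$, so Proposition \ref{P:theta_nc} applies. Since $\tau$ is generic, $\Theta_{\mini}(\tau)=\tau\otimes\tau$ by the correspondence recalled at the end of Section \ref{S:JMSL3}. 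Hence $\theta(\sigma_2)$ is a quotient of $\Ind_{\bar P}^{\SL_3}(\tau)\otimes\Ind_{\bar P}^{\SL_3}(\tau)$, whose unique irreducible quotient is $\sigma_2\otimes\sigma_2$. This is how the paper argues, and $\sigma'_2$ follows symmetrically.

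By contrast, the cases you flag as the main obstacles are not problematic.
\begin{itemize}
\item The trivial representation embeds in $\Ind_P^{\SL_3}(|\det|^{-3/2})$, so $s=-3/2\le 0$, with central character $|\cdot|^{-3}$ and $r_{\bar B}=|\cdot|^{-2}\otimes|\cdot|^{-1}$. Proposition \ref{P:theta_nc} therefore applies directly, and $1$ is the unique irreducible quotient of $\Ind_{\bar P}^{\SL_3}(|\det|^{-3/2})$.
\item Your proposed detour through the top piece would not have worked anyway. That piece lives in $r_P(\Pi)$, not in $\Pi_{\SL_3}$, and for $d=1$ Theorem \ref{T:three_step} does not identify it as minimal $\oplus$ trivial.
\item For $s=0$ and unitary $\chi$, $\Ind_P^{\SL_3}(\chi\circ\det)$ is irreducible, because degenerate representations of $\GL_3$ restrict irreducibly to $\SL_3$. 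No extra bookkeeping is needed.
\item Nonvanishing, which the paper leaves implicit, has a concrete proof that can replace your $S_3$-symmetry heuristic. For a Whittaker datum $(U,\psi)$ of one of the $\SL_3$-factors of $G$, one has $\Pi_{U,\psi}\cong C_c(\SL_3)$ as in Section \ref{S:3SL3}. The argument of Proposition \ref{P:two_sl} then shows $\Theta(\sigma)_{U,\psi}\neq 0$.
\end{itemize}
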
 
\begin{proof} Without loss of generality we can assume that $\sigma$ is a unique quotient of $\Ind_{\bar P}^{\SL_3}(\tau)$ where $\tau$ is either a one-dimensional representation $\chi\otimes |\det|^s$ with $s\leq 0$ and $\chi$ a unitary character, or $\tau=|\det|^{-3/2}\otimes \St$ where $\St$ is the 
Steinberg representation of $M=\GL_2$.  Hence $\sigma$ is a submodule of $\Ind_{P}^{\SL_3}(\tau)$. Since 
 the conditions of Proposition \ref{P:theta_nc} are satisfied (here $d=1$), $\Theta(\sigma)$ arises from the bottom term of the filtration. This implies that 
 $\theta(\sigma)$ is a quotient of 
\[ 
\Ind_{\bar P}^{\SL_3}(\tau) \otimes \Ind_{\bar P}^{\SL_3}(\tau), 
\] 
 as desired.  
\end{proof}


   \begin{cor}\label{C:unitary} Let $\tau$ be an irreducible unitary representation of $\GL_2$. Let $\pi$ be a summand of $\Ind_P^{\SL_3}(\tau)$. Then $\Ind_Q^G(\Theta_{\mini}(\tau))$ is unitary and 
   contains $\Theta(\pi)$ as a summand.  
   \end{cor}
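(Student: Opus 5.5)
The plan has three steps: show $\Ind_Q^G(\Theta_{\mini}(\tau))$ is unitary, realize $\Theta(\pi)$ as a quotient of it via Proposition \ref{P:theta_nc}, and then use unitarity to split off that quotient as a summand.

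\textbf{Step 1 (unitarity of the induced module).} Since $\tau$ is irreducible unitary, it is among the representations listed after Proposition \ref{P:theta_nc}, so the hypotheses of that proposition hold. Next I will check that $\Theta_{\mini}(\tau)$ is a unitarizable representation of $L$, using the description of the mini-theta correspondence in Appendix \ref{S:mini}. For generic $\tau$ this is the statement $\Theta_{\mini}(\tau)$ built from $\tau$ (in the $E_6$ case $\tau\otimes\tau$), twisted by a unitary central character of $L$. The non-generic unitary $\tau$ are the one-dimensional $\chi\circ\det$; for these the lift is a small one-dimensional or degenerate unitary representation, which I will verify case by case from the appendix. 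Normalized parabolic induction preserves unitarity, so $\Ind_Q^G(\Theta_{\mini}(\tau))$ is unitary. Having finite length, it is then semisimple.

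\textbf{Step 2 (decomposing $\Theta$ of the induced module).} Because $\tau$ is unitary, $\Ind_P^{\SL_3}(\tau)$ is unitary of finite length, hence a direct sum $\bigoplus_i \pi_i$ of irreducibles. The construction $\Theta(\sigma)=(\Pi\otimes\sigma^\vee)_{\SL_3}$ is additive in $\sigma$, so Proposition \ref{P:theta_nc} gives
\[
\bigoplus_i \Theta(\pi_i)\cong \Theta(\Ind_P^{\SL_3}(\tau)) \cong \Ind_Q^G(\Theta_{\mini}(\tau)).
\]
Here $\Theta$ is applied to $\sigma$ rather than $\pi$, but the argument is symmetric: it is the same Frobenius reciprocity computation as in the proof of Proposition \ref{P:theta_nc}, after replacing $\sigma$ by its contragredient and using that $\tau^\vee$ is also unitary.

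\textbf{Step 3 (conclusion).} From Step 2, $\Theta(\pi)$ is a direct summand of $\Ind_Q^G(\Theta_{\mini}(\tau))$, which by Step 1 is unitary and semisimple. A summand of a semisimple unitary module is itself a unitary summand, which proves the claim.

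\textbf{Main obstacle.} The hard part is Step 1, specifically the unitarity of $\Theta_{\mini}(\tau)$ for the degenerate unitary $\tau$ and for complementary-series-type $\tau$. In those cases one has to read off exponents from the similitude correspondence in Appendix \ref{S:mini} and check that no non-unitary twist appears. I would try first to show that the mini-theta correspondence commutes with complex conjugation and contragredient, so that $\Theta_{\mini}(\tau)$ is hermitian. I would then combine this with the explicit list of lifts to confirm that it is actually unitary.
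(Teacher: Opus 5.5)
Your proposal is correct and follows the paper's route. The paper's proof consists only of the observation that $\Theta_{\mini}(\tau)$ is unitary because $\tau$ is, and it leaves implicit exactly your Steps 2--3: Proposition \ref{P:theta_nc} together with additivity of $\Theta$ over the finite direct sum $\Ind_P^{\SL_3}(\tau)=\bigoplus_i\pi_i$. Your case-by-case unitarity check in Step 1 goes beyond what the paper supplies. Your aside about applying $\Theta$ to $\sigma$ rather than $\pi$ is unnecessary, since Proposition \ref{P:theta_nc} already concerns the lift of the $\SL_3$-module $\Ind_P^{\SL_3}(\tau)$.
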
 
   \begin{proof} It suffices to observe that $\Theta_{\mini}(\tau)$ is unitary since $\tau$ is so.  
   \end{proof} 

\subsection{Lifting the trivial representation of $\SL_3$} 
The trivial representation lives in a principal series representation with the Loewy diagram 

\[ 
\begin{array}{ccc}  
 &  1 &  \\
\sigma_2 &  & \sigma'_2  \\ 
 & \St &  
\end{array} 
\] 
where the subscript 2 denotes the dimension of the space of Iwahori-fixed vectors. In particular, the socle is the Steinberg representation and the co-socle is the trivial representation.

Assume $G=E_6$. The principal series representations of $\SL_3$ lift to representations of $G$ appearing in a degenerate principal series representation $J(s,t)$ induced from the co-rank two Levi $D_4$. Here $J(s,t)$ denotes induction from the character $|\varpi_1|^s\cdot |\varpi_6|^t$. For the lift of the trivial representation, we are interested in $J(1,1)$. This is the `IM-involute' of $X(D_4,1)$ from \cite[p.~456]{Ci06}. Its subquotients are described by the following Loewy diagram 
\[ 
\begin{array}{ccc}  
\pi_{57} &  & \pi_{21}  \\ 
\pi_{84} &  & \pi'_{84}  \\ 
 & \pi_{24} &  
\end{array}
\] 
(Here again the subscript denotes the dimension of the space of Iwahori-fixed vectors.) The bottom is the socle and the top is the co-socle. The dual of $J(1,1)$ is $J(-1,1)$ which has the same irreducible subquotients, but the diagram is upside down. 

\begin{prop}\label{P:triviallifts} We have $\theta(\sigma_2)=\pi_{84}$,  $\theta(\sigma'_2)=\pi'_{84}$, and $\Theta(1)=\pi_{21}\oplus \pi_{57}$.
\end{prop}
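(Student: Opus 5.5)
Throughout, $G=E_6$, so $d=4$ in Theorem \ref{T:three_step}. The idea is to compute the big theta lifts of the three irreducible constituents $1,\sigma_2,\sigma'_2$ of the length-four principal series of $\SL_3$ by Frobenius reciprocity, using Theorem \ref{T:three_step} and its sister statement for $r_{\bar P}(\Pi)$. We then match the result against the Loewy structure of $J(1,1)$ and of its dual $J(-1,1)$.

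\textbf{Step 1: a sharp embedding of $\Theta(1)$.} Realize the trivial representation as the unique irreducible submodule of $\Ind_{P}^{\SL_3}(\tau)$ with $\tau=|\det|^{-3/2}$. This $\tau$ satisfies the first bullet after Proposition \ref{P:theta_nc}, so $\Theta(1)$ is a quotient of
\[
\Ind_Q^G(\Theta_{\mini}(\tau)).
\]
By Appendix \ref{S:mini}, $\Theta_{\mini}$ of a one-dimensional character of $\GL_2$ is a one-dimensional character of $L=\almostGSpin_{10}$. Hence $\Ind_Q^G(\Theta_{\mini}(\tau))$ is a degenerate principal series from the $D_5$ parabolic. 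Induction in stages through the $D_4$ Levi identifies it with a quotient of $J(1,1)$, or of $J(-1,1)$, depending on the orientation.

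Independently, the see-saw identity from \S\ref{subs_Whittaker_coinv} gives
\[
\Hom_U(\Theta(\pi),\psi)\cong\Hom_H(\pi,\mathbb C).
\]
Applying $r_P$ to the trivial representation and using the top term of the filtration shows that $1$ and $\pi_{\min}$ are not among the candidates, because the central exponents $|\cdot|^{9},|\cdot|^{12}$ are not $|\cdot|^{-3}$.

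\textbf{Step 2: exponents of the constituents.} Compute $r_{P'}$ (for the parabolic $P'$ of $\SL_3$) of each of $1$, $\sigma_2$, $\sigma'_2$, $\St$. These are explicit sums of characters of the torus: $\St$ carries the anti-dominant exponent, $1$ the dominant one, and $\sigma_2,\sigma'_2$ the two mixed Weyl orbits. Via the bottom piece of Theorem \ref{T:three_step}, these exponents pin down which Jacquet module constituents of $\pi_{21},\pi_{57},\pi_{84},\pi'_{84},\pi_{24}$ can pair with each of them. Here we use the Iwahori-Hecke data of \cite{Ci06}, whose Iwahori-fixed dimensions serve as bookkeeping.

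Two facts drive the assignment. First, $\Theta(1)$ is a quotient of the $D_5$ degenerate series, hence of $J(1,1)$. Second, $\Theta(\St)$ must be generic-related, since $\St$ is $\psi$-generic; it sits at the opposite end of the diagram, namely $\pi_{24}$, or it lifts from the dual series. Together these force $\Theta(1)$ to lie in the cosocle $\pi_{57}\oplus\pi_{21}$.

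\textbf{Step 3: $\Theta(1)$ is all of the cosocle.} We must show $\Theta(1)$ is the whole cosocle, not just one summand. We will show both summands occur as quotients of $\Theta(1)$ by exhibiting nonzero $\SL_3$-invariants in $\Theta(\pi_{57})$ and $\Theta(\pi_{21})$, i.e. $\Hom_G(\Pi_{\SL_3},\pi)\neq 0$. The natural route is Proposition \ref{P:theta_nc} for the opposite parabolic, combined with the description of $\Pi_{N}$ for the $D_5$ parabolic of $E_8$. This step must also rule out nonsplit extensions between $\pi_{57}$ and $\pi_{21}$, which holds because they are not linked in the Loewy diagram.

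\textbf{Step 4: the middle constituents.} For $\sigma_2$, realize it as a submodule of $\Ind_P(\tau)$ with $\tau=|\det|^{-3/2}\otimes\St$, which satisfies the second bullet. Then $\theta(\sigma_2)$ is a quotient of a $D_5$-induced representation whose constituents lie in the middle layer, giving $\theta(\sigma_2)=\pi_{84}$. The case $\sigma'_2\leftrightarrow\pi'_{84}$ follows by the outer automorphism, which swaps $\varpi_1$ and $\varpi_6$ and $P$ with the other maximal parabolic.

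\textbf{Main obstacle.} The hardest part is Step 3: establishing $\Theta(1)=\pi_{21}\oplus\pi_{57}$ exactly, and in particular nonvanishing for both summands. I would first try exact Jacquet module dimension counts, comparing $r_{P}(\Theta(1))$ with Theorem \ref{T:three_step}.
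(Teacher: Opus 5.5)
Your Steps 1 and 4 follow the paper: you embed $1$ and $\sigma_2$ into $\Ind_P^{\SL_3}(|\det|^{-3/2})$ and $\Ind_P^{\SL_3}(\St\otimes|\det|^{-3/2})$ and apply Proposition~\ref{P:theta_nc}. In Step 4, note that the length-two module has $\pi_{24}$ as submodule and $\pi_{84}$ as unique irreducible quotient; its constituents are not both ``in the middle layer''. The outer-automorphism shortcut for $\sigma'_2$ is a reasonable substitute for ``similarly''.

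The determination of $\Theta(1)$, however, has a genuine gap. In Step 2 you claim that being a quotient of $I_Q:=\Ind_Q^G(\Theta_{\mini}(|\det|^{-3/2}))$, together with genericity of $\St$, forces $\Theta(1)$ into the cosocle. It does not. $I_Q$ is a length-three quotient of $J(1,1)$ with $\pi'_{84}$ as submodule and $\pi_{21}\oplus\pi_{57}$ as cosocle. So a priori $\Theta(1)$ could be all of $I_Q$, a single summand, or zero, and nothing about $\Theta(\St)$ or the exponents of $1$ distinguishes these cases. Step 3 then leaves the nonvanishing of both summands to an unspecified coinvariant or Jacquet-module computation.

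The missing idea is right exactness of $\Theta$, which is why Step 4 has to come first. By Step 4 (and its analogue for $\sigma'_2$), $\Theta(\sigma'_2)$ is a quotient of a module with constituents $\pi_{24}$ and $\pi'_{84}$. Apply $\Theta$ to $0\to 1\to\Ind_P^{\SL_3}(|\det|^{-3/2})\to\sigma'_2\to 0$. Proposition~\ref{P:theta_nc} gives the equality $\Theta(\Ind_P^{\SL_3}(|\det|^{-3/2}))=I_Q$, not merely a surjection, so you get
\[ \Theta(\sigma'_2)\longrightarrow I_Q\longrightarrow \Theta(1)\longrightarrow 0 . \]
The image of the left arrow is $0$ or $\pi'_{84}$, because $\pi_{24}$ does not occur in $I_Q$. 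Hence $\Theta(1)$ is either $I_Q$ or exactly $\pi_{21}\oplus\pi_{57}$. In particular both summands occur, which resolves your ``main obstacle'' with no Jacquet-module counting.

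Now run the same argument with $\bar P$. There $1$ embeds with quotient $\sigma_2$, and the extra constituent of the relevant induced module is $\pi_{84}$ rather than $\pi'_{84}$. This gives the alternatives $I_{\bar Q}$ or $\pi_{21}\oplus\pi_{57}$. Since $\pi_{84}\not\cong\pi'_{84}$, only $\Theta(1)=\pi_{21}\oplus\pi_{57}$ is compatible with both. You mention the sister statement for $r_{\bar P}(\Pi)$ but never use it to cut down the answer, and that intersection is exactly what closes the proof.
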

\begin{proof} We use that $\sigma_2$ is a submodule of $\Ind_{P}^{\SL_3} (\St\otimes |\det|^{-3/2})$ so that $\Theta(\sigma_2)$ is a 
quotient of 
\[ 
\Ind_Q^G(\Theta_{\mathrm{mini}}(\St\otimes |\det|^{-3/2})). 
\] 
This representation is a submodule of $J(1,1)$ of length two, with 
$\pi_{24}$ as a submodule, and $\pi_{84}$ as a quotient. This proves that 
$\theta(\sigma_2)=\pi_{84}$.  The second statement is proved similarly. Now for the lift of $1$, we use the sequence 
\[ 
0\rightarrow 1 \rightarrow \Ind_P^{\SL_3}(|\det|^{-3/2})\rightarrow \sigma'_2 \rightarrow 0.
\]
Applying the theta correspondence, this becomes
\[ 
\Theta(\sigma'_2) \rightarrow \Ind_Q^G(\Theta_{\mathrm{mini}}(|\det|^{-3/2})) \rightarrow \Theta(1) \rightarrow 0.  
\]
The middle term is a quotient of $J(1,1)$ of length three, with $\pi'_{84}$ as a submodule, and $\pi_{21}\oplus \pi_{57}$ as a quotient. We know that 
$\Theta(\sigma'_2)$ has $\pi'_{84}$ as a quotient, but we do not have control of the left arrow. This leaves two possibilities: $\Theta(1)$ as claimed, or slightly larger, with $\pi'_{84}$ as a submodule. However, we can repeat the same argument by embedding $1$ into a representation induced from $\bar P$, with the quotient $\sigma_2$.  Combining the two gives us the claimed statement.   
\end{proof}

\subsection{Lifting Steinberg}
We begin by describing some standard modules that we need.

Let $X(A_2\times A_2, \tau, s)$ be the standard module for $\SL_6$ induced from the standard parabolic with Levi $A_2\times A_2$: we are inducing a tempered representation $\tau$ twisted by a positive character of the Levi, whose logarithm is 
\[ 
s\varpi_3=\frac{1}{2}(s,s,s,-s,-s,-s)
\]
where $\varpi_3$ is the third fundamental weight for $\SL_6$. 
Of interest to us is $X(A_2\times A_2, \St, 1)$ where 
$\St$ is the Steinberg representation. The unique (Langlands) 
quotient $\bar X(A_2\times A_2, \St, 1)$ is a Speh representation of $\SL_6$.

Similarly, let $X(A_2\times A_2, \tau, (s,t))$ be the standard module for $E_6$, induced from the standard parabolic with Levi $A_2\times A_2$. Here we are inducing a tempered representation $\tau$ twisted by a positive character of the Levi whose logarithm is 
\[ 
s\varpi_4+ t\varphi_2=(0,0,s,s,s,-s,-s,s) + 
\frac{1}{2}(t,t,t,t,t,-t,-t,t)
\]
where $\varpi_2$ and $\varpi_4$ are fundamental weights for $E_6$.
Of interest to us is the Langlands quotient 
$\bar X(A_2\times A_2, \St, (1,1))$. This representation has been studied by Ciubotaru \cite[page 462]{Ci06}. It has a single 
$W$-type of dimension 24. In fact, this is none other than $\pi_{24}$, the socle of $J(1,1)$. We have: 

\begin{prop} Let $\St$ be the Steinberg representation of $\SL_3$. 
\begin{enumerate}[(i)]
    \item The big theta of $\St$ lift to $\SL_6$ is irreducible and isomorphic to $\bar X(A_2\times A_2, \St, 1)$. 
    \item The big theta lift of $\St$ to $E_6$ is irreducible and isomorphic to $\pi_{24}=\bar X(A_2\times A_2, \St, (1,1))$.
\end{enumerate}
\end{prop}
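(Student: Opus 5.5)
We compute the lift of $\St$ by the same mechanism used for $\sigma_2$ and $\sigma'_2$ in Proposition \ref{P:triviallifts}. The first step is to embed $\St$ in a principal series that satisfies the hypotheses of Proposition \ref{P:theta_nc}. The Steinberg representation is the socle of the principal series whose Loewy diagram is displayed above. In particular it is a submodule of $\Ind_P^{\SL_3}(\tau)$ with $\tau=\St_{\GL_2}\otimes|\det|^{s}$ for an appropriate $s$. By the computation after Proposition \ref{P:theta_nc}, $\St_{\GL_2}\otimes|\det|^s$ with $s\le 0$ always satisfies the required conditions; here $\St$ is the socle for the choice with $s\le 0$, the same induced module used for $\sigma_2$ up to replacing $P$ by $\bar P$. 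Proposition \ref{P:theta_nc}, or its sister statement for $\bar P$, then shows that $\Theta(\St)$ is a quotient of
\[
\Ind_Q^G(\Theta_{\mini}(\St_{\GL_2}\otimes|\det|^{s})) \quad\text{or}\quad \Ind_{\bar Q}^G(\Theta_{\mini}(\St_{\GL_2}\otimes|\det|^{s})).
\]
We choose between $P$ and $\bar P$ so that this induced module has the relevant standard module as a quotient.

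Next we identify the induced module. By Appendix \ref{S:mini}, the mini-theta lift of a twisted Steinberg of $\GL_2$ to $L$ is, up to twist, a Steinberg-type representation. For $E_6$ the relevant rule is $\tau\mapsto\tau\otimes\tau$ on $\GL_2\times\GL_2$. For $\SL_6$ it is the generic lift to $\GL_4\cong\almostGSpin_6$, which is a Steinberg of $\GL_4$ twisted, or more precisely a Steinberg on a $\GL_2\times\GL_2$ Levi inside $\GL_4$. Inducing in stages, $\Ind_Q^G(\Theta_{\mini}(\tau))$ embeds into, or maps onto, a representation induced from the $A_2\times A_2$ parabolic, namely the Steinberg of the Levi twisted by a character. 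Tracking exponents should identify this character with $\varpi_3$ at $s=1$ for $\SL_6$, and with $(s,t)=(1,1)$ for $E_6$. Up to taking contragredients, the representation is then the standard module $X(A_2\times A_2,\St,\cdot)$, and it has a unique irreducible quotient, namely the Langlands quotient. For $E_6$ we can alternatively read this off from the structure of $J(1,1)$ already used in the proof of Proposition \ref{P:triviallifts}. There, $\Ind_Q^G(\Theta_{\mini}(\St\otimes|\det|^{-3/2}))$ was a length-two submodule of $J(1,1)$ with socle $\pi_{24}$, and the $\bar Q$ analogue is its dual. So $\Theta(\St)$ is a quotient of a length-two module with cosocle $\pi_{24}$.

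It remains to show that $\Theta(\St)$ is nonzero and not larger than the Langlands quotient. For nonvanishing, $\St$ is $\psi$-generic for every Whittaker datum. The see-saw identity gives $\Hom_U(\Theta(\pi),\psi)\cong\Hom_G(C_c(G/H),\pi)$, which links $\St$ to $\Theta(\St)$ through $\Pi_{U,\psi}\cong C_c(G/H_a)$, and this forces $\Theta(\St)\ne 0$. To cut $\Theta(\St)$ down to the Langlands quotient we follow the trick from Proposition \ref{P:triviallifts}. We embed $\St$ both via $P$ and via $\bar P$, which yields two induced modules with different Loewy structures, and $\Theta(\St)$ must be a common quotient of both. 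For $E_6$, one of them is a module with cosocle $\pi_{24}$ and the other is the dual-type module, for which $\pi_{24}$ is the socle. A common quotient must therefore be $\pi_{24}$ itself, so $\Theta(\St)=\pi_{24}$. The identification $\pi_{24}=\bar X(A_2\times A_2,\St,(1,1))$ is Ciubotaru's. The $\SL_6$ case is analogous: a common quotient of the standard module and its opposite-parabolic counterpart is the Speh representation.

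The main obstacle is the exponent bookkeeping and the structure of the induced modules. We must verify that the mini-theta lift, induced in stages, really lands on the $A_2\times A_2$ standard module with the stated parameter. We must also show that the two parabolic realizations have opposite Loewy structures, so that only the Langlands quotient survives; this amounts to checking that no extra constituent sits in the cosocle of both modules. For $E_6$ I would rely on Ciubotaru's description of $J(1,1)$; for $\SL_6$ I would use the Zelevinsky/Bernstein–Zelevinsky description of induced representations of $\GL_6$ restricted to $\SL_6$.
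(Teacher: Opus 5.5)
Your proposal has the right overall shape: bound $\Theta(\St)$ through both $P$ and $\bar P$, then take a common quotient, which is also what the paper does. However, two steps fail as written.

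\textbf{The final common-quotient step fails, because the $P$-side embedding is wrong.} $\St$ is \emph{not} a submodule of $\Ind_P^{\SL_3}(\St_{\GL_2}\otimes|\det|^{-3/2})$. There $\sigma_2$ is the submodule and $\St$ is the quotient. So the length-two piece $\Ind_Q^G(\Theta_{\mini}(\St\otimes|\det|^{-3/2}))\subset J(1,1)$, with socle $\pi_{24}$ and cosocle $\pi_{84}$, is an upper bound for $\Theta(\sigma_2)$, not for $\Theta(\St)$. Your own picture shows the argument cannot work. If one module had cosocle $\pi_{24}$ and the other had $\pi_{24}$ as its (irreducible) socle, then $\pi_{24}$ could not be a quotient of the second one. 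A common quotient would then have to be $0$, not $\pi_{24}$. The same objection applies to your "standard module versus its opposite-parabolic counterpart" for $\SL_6$.

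The correct embedding is $\St\subset\Ind_P(\tau)$ with $\tau=r_P(\St)\cong|\det|^{3/2}\otimes\St$. Then $\Ind_Q^G(\Theta_{\mini}(\tau))$ is a length-two quotient of $J(-1,-1)$, with submodule $\pi_{84}$ and quotient $\pi_{24}$. Your $\bar P$-embedding $\St\subset\Ind_{\bar P}(\St_{\GL_2}\otimes|\det|^{-3/2})$ is correct. It gives a $\bar Q$-induced length-two module with submodule $\pi'_{84}$ and quotient $\pi_{24}$. This module is not the dual of the $J(1,1)$-piece as you claim, since it contains $\pi'_{84}$ rather than $\pi_{84}$. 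So both upper bounds have $\pi_{24}$ as their unique irreducible quotient. What forces $\Theta(\St)=\pi_{24}$ is that their other constituents differ, $\pi_{84}\not\cong\pi'_{84}$. Hence neither module is a quotient of the other, and the only possible nonzero common quotient is $\pi_{24}$.

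\textbf{The hypothesis check is missing, and this is where the real content lies.} $\St$ is square-integrable, which is exactly the case the list after Proposition \ref{P:theta_nc} does not cover. Passing to $\bar P$ does not put you in a range where the hypotheses hold automatically. The transpose--inverse symmetry exchanges $P$ and $\bar P$ and acts on $M\cong\GL_2$ by $\tau\mapsto\tau^{\vee}$. So $\St_{\GL_2}\otimes|\det|^{-3/2}$ for $\bar P$ is the mirror image of $|\det|^{3/2}\otimes\St_{\GL_2}$ for $P$, and it needs the same verification. That verification has to be done by hand against Theorem \ref{T:three_step}:
\begin{itemize}
\item the central character of $\tau$ is $|\cdot|^3$, which is neither $|\cdot|^{2d+1}$ nor $|\cdot|^{3d}$;
\item $r_{\bar B}(\tau)=|\cdot|^{2}\otimes|\cdot|^{1}$, whose second factor is not $|\cdot|^{d}$.
\end{itemize}
Both conditions use $d\in\{2,4\}$ and fail for $d=1$. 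This is consistent with the different answer $\St\otimes\St$ in the $\SL_3\times\SL_3$ case.

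\textbf{The mini-theta lifts you quote are misidentified.} The rule $\tau\mapsto\tau\otimes\tau$ on $\GL_2\times\GL_2$ is the $d=1$ rule. For $G=E_6$ one has $L=\almostGSpin_{10}$, and $\Ind_Q^G(\Theta_{\mini}(\tau))$ lives inside $J(\pm1,\pm1)$, which is induced from the $D_4$ Levi. So there is no induction in stages to the $A_2\times A_2$ standard module. The link to $\bar X(A_2\times A_2,\St,(1,1))$ comes only through Ciubotaru's identification of $\pi_{24}$. Likewise, for $\SL_6$ the mini-theta lift of a twisted Steinberg to $\GL_4$ is the Langlands quotient of $\St\nu^{1/2}\times\St\nu^{-1/2}$, not a twisted Steinberg of $\GL_4$.
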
 
\begin{proof} We shall use Theorem \ref{T:three_step}. Recall that $d=2$ and $4$ in the two cases that we are considering. 
Since $\St\otimes \Theta(\St)$ is a quotient of $\Pi$, we have a surjective map 
\[ 
r_P(\Pi) \rightarrow r_P(\St)\otimes \Theta(\St).
\]
Let $\tau:=r_P(\St)$. Note that $\tau\cong |\det|^{3/2}\otimes \St$. It follows that the central character of $\tau$ 
is $|\cdot |^3$. Thus $\tau$ cannot be a quotient of the first piece of the filtration of $r_P(\Pi)$. For the second, note that, since $\bar B$ are upper triangular matrices, 
$r_{\bar B}(\tau)= |\cdot|^{1/2 + 3/2} \times  |\cdot|^{-1/2 + 3/2}$. In particular, the second factor is not $|\cdot|^d$. Hence 
$\tau$ is not a quotient of the middle piece, and $\Theta(\St)$ comes from the bottom piece. As a result, $\Theta(\St)$ is a quotient of 
\[
\Ind_Q^G(\Theta_{\mini}(\tau)).
\]
Assume $G=E_6$, so $Q$ is a $D_5$ parabolic in $E_6$. 
The above induced representation is a quotient of $J(-1,-1)$, of length two. 
It has $\pi_{84}$ as a submodule and $\pi_{24}$ as a quotient. 
 
Now we play the same game with $\bar P$. 
This time we conclude that $\Theta(\St)$ is a quotient of a representation induced from  $\bar Q$ parabolic of length two, with 
$\pi'_{84}$ as a submodule and $\pi_{24}$ as a quotient. 
Thus $\Theta(\St)=\pi_{24}$. 
\end{proof}

\subsection{Lifting trivial and minimal representations of $G$}
Here $G=\SL_6$ and $E_6$.  
Let $1_G$ be the trivial representation of $G$. If $G=E_6$ let 
$\pi_{\min}$ be the unique minimal representation of $G$. 
On the other hand, the group $\SL_6$ has a family of minimal representations. They are induced from an $\SL_5$ maximal parabolic. We shall be interested in two, so let $\pi_{\min}$ denote a minimal representation of $\SL_6$ with infinitesimal character 
\[
(13/6,7/6,1/6,-5/6,-5/6, -11/6) \text{ or } (11/6,5/6,5/6,-1/6,-7/6, -13/6).
\]
In particular, these are not hermitian.

Recall (from Theorem \ref{T:three_step}) that the top of the filtration of $r_P(\Pi)$ as $(\GL_2\times G)$-module is a sum 
\[ 
|\det|^{d+ \frac{1}{2}}\otimes \pi_{\min}\ \oplus\ \tau \otimes 1_G
\]
where $\tau$ is induced from $|\cdot|^d \times |\cdot |^{2d}$.  If $G=\SL_6$, then $\pi_{\min}$ depends on the conjugacy class of $P$. 

\begin{prop} \label{P:lift_min}
\begin{enumerate}[(i)]
    \item The big theta lift of $1_G$ has an irreducible quotient isomorphic to the irreducible principal series representation with the infinitesimal 
character $(d,0,-d)$.
    \item If $G=E_6$, the big theta lift of $\pi_{\min}$ has 
a quotient isomorphic to a sum of two non-hermitian degenerate representations of infinitesimal characters 
\[ 
(2, 1, -3) \text{ and } (-2 , -1, 3).
\] 
\item If $G=\SL_6$ the big theta lifts of the two minimal representations have quotients (in some order) isomorphic to the degenerate representations with infinitesimal characters 
\[ 
\frac{1}{3}(4, 1, -5) \text{ and } \frac{1}{3}(-4 , -1, 5). 
\]
\end{enumerate}
\end{prop}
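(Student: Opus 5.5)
The plan is to read off the lifts from the top piece of the filtration of $r_P(\Pi)$ in Theorem \ref{T:three_step}, via Frobenius reciprocity in the other direction. For an irreducible $\pi$ of $G$ we have
\[
r_P(\Theta(\pi)) = (r_P(\Pi)\otimes\pi^{\vee})_G .
\]
The top piece of the filtration is the quotient
\[
r_P(\Pi) \twoheadrightarrow |\det|^{d+\frac12}\otimes \pi_{\min} \ \oplus\ \tau\otimes 1_G ,
\]
which is a direct sum in the cases $G=\SL_6,E_6$, where $2d+1\neq 3d$. Since coinvariants are right exact, taking $(\cdot\otimes \pi^\vee)_G$ with $\pi=1_G$ gives a surjection $r_P(\Theta(1_G))\twoheadrightarrow \tau$. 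Taking $\pi=\pi_{\min}$ gives $r_P(\Theta(\pi_{\min}))\twoheadrightarrow |\det|^{d+\frac12}$, because $(\pi_{\min}\otimes\pi_{\min}^{\vee})_G$ is one-dimensional and $(1_G\otimes\pi_{\min}^\vee)_G=0$, and vice versa. By Frobenius reciprocity, written as $\Hom_{\SL_3}(\Theta(\pi),\Ind_P^{\SL_3}(\cdot))=\Hom_{\GL_2}(r_P\Theta(\pi),\cdot)$, we obtain nonzero maps
\[
\Theta(1_G)\to \Ind_P^{\SL_3}(\tau),\qquad \Theta(\pi_{\min})\to \Ind_P^{\SL_3}(|\det|^{d+\frac12}).
\]

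For (i), I would write $\Ind_P^{\SL_3}(\tau)$ with $\tau=|\cdot|^d\times|\cdot|^{2d}$ as a principal series of $\SL_3$ by induction in stages. Its exponent, once the $\rho_N^{1/2}$ shift and the $\GL_2$ embedding are accounted for, is $(d,0,-d)$ up to Weyl group action. I would then check that this principal series is irreducible: the differences of the entries are $d,d,2d$, and none equals $\pm1$ since $d=2,4$. (For $d=1$, i.e.\ $E_6$, the differences do equal $1$, but that case is excluded here.) Being irreducible, the principal series is the image of the nonzero map, which gives the desired irreducible quotient of $\Theta(1_G)$.

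For (ii) and (iii), $\Ind_P^{\SL_3}(|\det|^{d+\frac12})$ is a degenerate principal series, with infinitesimal character computed from $\rho_{\GL_2}+(d+\tfrac12)$ shifted appropriately. For $E_6$, $d=4$ gives a character of the form $(2,1,-3)$; for $\SL_6$ the minimal representations are attached to the two conjugacy classes of $P$ and the exponents produce $\tfrac13(4,1,-5)$. The image of $\Theta(\pi_{\min})$ is then a nonzero submodule of this degenerate series. Running the sister argument with $\bar P$ gives a map into $\Ind_{\bar P}^{\SL_3}$ of the corresponding character, whose infinitesimal character is the negative, $(-2,-1,3)$. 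Together these produce a quotient of $\Theta(\pi_{\min})$ that surjects onto the direct sum of the two distinct irreducible pieces. Those pieces are non-hermitian because their infinitesimal characters are not conjugate to their negatives.

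The main obstacle is showing that the images are irreducible degenerate representations. This amounts to controlling the reducibility of $\Ind_P^{\SL_3}(|\det|^{s})$: at these non-special $s$, the only reducibility points being $s=\pm\tfrac32$ up to sign, it is irreducible. I would verify this via the standard rank-one reducibility criterion, i.e.\ check that no coroot pairing equals $\pm1$ in a way that causes reducibility. If reducibility does occur, I would instead take the unique irreducible submodule, and argue via the other parabolic that the image is exactly that piece. The bookkeeping of the normalizations, in particular matching the precise infinitesimal characters, is routine but error-prone, and I would cross-check it against the $E_6$ computation of $s=1/2$, $t=1$ at the end of Section \ref{S:JMSL3}.
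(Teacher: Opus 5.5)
Your proposal is correct and follows essentially the paper's own argument. Frobenius reciprocity applied to the top summands $|\det|^{d+\frac12}\otimes\pi_{\min}\oplus\tau\otimes 1_G$ of $r_P(\Pi)$, and to the analogous top quotient of $r_{\bar P}(\Pi)$ for the hermitian duals, gives nonzero maps from $\Theta(1_G)$ and $\Theta(\pi_{\min})$ into irreducible induced representations, which are therefore quotients. The one difference is that you carry out the irreducibility checks the paper only asserts: the exponent differences $d,d,2d$, and reducibility of $\Ind_P^{\SL_3}(|\det|^s)$ only at $s=\pm\tfrac32$. One small point: for $G=\SL_6$ the $P$ and $\bar P$ arguments apply to the two different minimal representations, so the direct-sum conclusion holds only for $E_6$.
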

\begin{proof}
This follows immediately from Frobenius reciprocity. The full lift of 
$1_G$ has a quotient $\Ind_P^{\SL_3}(\tau)$ which is an irreducible principal series with infinitesimal character $(d,0,-d)$. Similarly, the full lift of $\pi_{\min}$ has  a quotient isomorphic to the degenerate representation 
$\Ind_P^{\SL_3}(|\det|^{d+ \frac{1}{2}})$. This is not a hermitian representation. Its hermitian dual is also a quotient of $\Theta(\pi_{\min})$ because we can use the same argument with the top quotient of $r_{\bar P}(\Pi)$, where $\bar P$ is the opposite parabolic. 
\end{proof}
 
\section{Three $\SL_3$}  \label{S:3SL3}
In this section we work out the correspondence inside $E_6$. This resembles the results of \cite{HS25}. We have 
 \[ 
\SL_3\times \SL_3\times \SL_3 /\mu_3^2 
 \] 
 in adjoint $E_6$.  Let $\Pi$ be the minimal representation of $E_6$. 
 The goal here is to prove that for any Whittaker-generic irreducible representation $\pi$ of $\GL_3$,  
 \[ 
\Theta(\pi):= (\Pi\otimes \pi^{\vee})_{\SL_3}\cong \pi\otimes\pi  
 \] 
where the former is naturally a $\GL_3^2$-module, following our paper \cite{BGS24}. We start with the following observation: On $\SL_n$ we have an action  of $(g,h)\in (\GL_3\times \GL_3)^{\det}$  by $x\mapsto gxh^{\top}$. This gives an action of $(\GL_3\times \GL_3)^{\det}$ on the regular representation $C_c(\SL_n)$. 
 \begin{prop} \label{P:two_sl}
 For every irreducible representation $\pi$ of $\GL_n$ 
  \[ 
  (C_c(\SL_n) \otimes \pi^{\vee})_{\SL_n} \cong \pi 
  \] 
  where we are taking co-invariants for $\SL_n$ acting on $C_c(\SL_n)$ from the right.  
  \end{prop}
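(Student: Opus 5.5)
The plan is to reduce to the standard Hecke-algebra identity $(C_c(S)\otimes\sigma)_S\cong\sigma$ for a smooth representation $\sigma$ of a $p$-adic group $S$. The equivariance will then be pinned down by hand, and the contragredient removed with Gelfand--Kazhdan. Throughout, $S=\SL_n$, and $\pi^\vee$ is regarded as an $S$-module by restriction. The coinvariants are taken for the diagonal action of $S$: right translation on $C_c(S)$ tensored with $\pi^\vee$.

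\emph{Step 1 (vector space identification).} Define
\[
\Phi\colon C_c(S)\otimes\pi^\vee\to\pi^\vee,\qquad \Phi(f\otimes\lambda)=\int_S f(x)\,\pi^\vee(x^{\top})\lambda\,dx .
\]
The transpose is chosen so that it matches the action $x\mapsto xh^{\top}$. One checks that $\Phi$ factors through the $S$-coinvariants: the substitution $x\mapsto xh^{\top}$ in the Haar integral absorbs $\pi^\vee(h)$. Surjectivity follows from smoothness of $\pi^\vee$, using characteristic functions of small compact open subgroups. Injectivity on coinvariants is the usual statement that $C_c(S)\otimes_{\mathcal H(S)}\sigma\cong\sigma$ for smooth (nondegenerate) $\sigma$. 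Concretely, $f\otimes\lambda$ is congruent modulo the coinvariant relations to $e_K\otimes\Phi(f\otimes\lambda)$, where $e_K$ is the normalized idempotent of a sufficiently small open compact $K$. So the kernel of $\Phi$ lies in the span of the relations. Here we are using that $\pi^\vee$ restricted to $S$ is smooth, and that $S$-coinvariants commute with the required limits.

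\emph{Step 2 (the residual action).} It remains to see how the group acting on the left transports through $\Phi$. The statement refers to the action of $(\GL_n\times\GL_n)^{\det}$ with the second factor partly absorbed by the coinvariants. Concretely, the remaining $\GL_n$ acts by $(g,h)$ with $\det g\det h=1$, where $g$ acts on $C_c(S)$ by $x\mapsto g^{-1}x$ (combined with $x\mapsto xh^{\top}$) and $h$ acts on $\pi^\vee$ through its $\GL_n$-structure. A change of variables in $\Phi$ shows that the element $(g,h)$ acts on the target $\pi^\vee$ by $\pi^\vee\bigl((g^{-1})^{\top}\bigr)$, up to the factor $h$, which is killed modulo $S$-coinvariants. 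One point to verify is that the answer depends only on the class of $(g,h)$ modulo $\{1\}\times S$, so that it indeed defines a $\GL_n$-action. Hence the coinvariants are $\pi^\vee\circ\theta$ as a $\GL_n$-module, where $\theta(g)=(g^{\top})^{-1}$.

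\emph{Step 3.} By Gelfand--Kazhdan, $\pi^\vee\cong\pi\circ\theta$ for every irreducible $\pi$ of $\GL_n$. Therefore $\pi^\vee\circ\theta\cong\pi$, which is the claim.

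I expect the main obstacle to be Step 2. One must keep track precisely of which copy of $\GL_n$ survives and how the determinant condition interacts with $\pi^\vee$ being a $\GL_n$- rather than merely an $S$-module. A sign or transpose error there would turn $\pi$ into $\pi^\vee$. To guard against this, I would first run the argument with $\GL_n$ in place of $\SL_n$, i.e.\ with $(C_c(\GL_n)\otimes\pi^\vee)_{\GL_n}$. Then I would note that $C_c(\GL_n)\cong\ind_{(\GL_n\times\GL_n)^{\det}}^{\GL_n\times\GL_n}C_c(\SL_n)$, since the $(\GL_n\times\GL_n)^{\det}$-orbits on $\GL_n$ are the fibres of $\det$. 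This induction relation lets me cross-check the normalization of the residual action.
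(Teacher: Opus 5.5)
Your route is essentially the paper's. The paper also combines the integration map with Gelfand--Kazhdan: it realizes $\pi^\vee$ on the space of $\pi$ via $\pi^\vee(g^{-1})=\pi(g^{\top})$ and uses $f\otimes v\mapsto \pi(f)v$. You apply Gelfand--Kazhdan at the end rather than at the start. You also prove bijectivity by the general idempotent argument for an arbitrary smooth $S$-module, whereas the paper decomposes $\pi|_{\SL_n}$ into finitely many irreducibles and quotes the identity for each; your version avoids that input.

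However, $\Phi$ as you defined it does not factor through the coinvariants, so the check you claim in Step 1 fails. Pulling back along $x\mapsto xh^{\top}$ means $h\in S$ acts by $f\mapsto f(\,\cdot\,\theta(h))$, where $\theta(h)=(h^{\top})^{-1}$. The relations are therefore $f(\,\cdot\,\theta(h))\otimes\pi^\vee(h)\lambda\sim f\otimes\lambda$. Substituting $y=x\theta(h)$, so that $x^{\top}=hy^{\top}$, gives
\[
\Phi(f(\,\cdot\,\theta(h))\otimes\pi^\vee(h)\lambda)=\int_S f(y)\,\pi^\vee(h)\,\pi^\vee(y^{\top})\,\pi^\vee(h)\lambda\,dy ,
\]
which is not $\Phi(f\otimes\lambda)$. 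The reason is that $x\mapsto x^{\top}$ is an anti-automorphism. Hence $x\mapsto\pi^\vee(x^{\top})$ is an anti-representation and cannot absorb the factor $\pi^\vee(h)$ sitting to its right.

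The fix is to use the kernel $\pi^\vee(\theta(x))=\pi^\vee((x^{\top})^{-1})$. Then $\theta(x)=\theta(y)h^{-1}$, and the relation is absorbed. Under $\pi^\vee\cong\pi\circ\theta$ this kernel is exactly $\pi(x)$, so the corrected $\Phi$ is the paper's map.

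With the corrected kernel, Step 2 holds as you state, and more directly than you describe. Take any $(g,h)$ with $\det g\det h=1$ and substitute $y=g^{-1}x\theta(h)$. This substitution preserves Haar measure: it is right translation by $\theta(h)g^{-1}\in S$ followed by conjugation by $g$. It gives $\theta(x)=\theta(g)\theta(y)h^{-1}$, so $(g,h)$ acts by exactly $\pi^\vee(\theta(g))$. The factor $\pi^\vee(h)$ is cancelled by the change of variables. It is not ``killed modulo $S$-coinvariants''; indeed $h\notin S$ in general. Your bijectivity argument and Step 3 then go through unchanged.
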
 
  \begin{proof} We use that $\pi^{\vee}$ is realized on the same space $V$ as $\pi$, but $\pi^{\vee}(g^{-1}) =\pi(g^{\top})$.  
  We have a natural $\GL_n$-intertwining map 
  \[ 
  (C_c(\SL_n) \otimes \pi^{\vee})_{\SL_n} \rightarrow \pi 
  \] 
  given by $(f,v) \mapsto \pi(f)v$, where $v\in V$.  
  To prove that this is bijective, let $\pi=\pi_1 + \cdots + \pi_s$ be the restriction to $\SL_n$, and use 
  \[ 
  (C_c(\SL_n) \otimes \pi^{\vee}_i)_{\SL_n} \cong  \pi_i
  \] 
  for any irreducible representation $\pi_i$ of $\SL_n$.  
   \end{proof} 
   
Let us look at a special case when an irreducible representation $\pi$ of $\GL_n$ stays irreducible when restricted to $\SL_n$. Let $\pi_1$ denote the restriction to $\SL_3$. Then $\pi\otimes\pi$ is a representation of $(\GL_3\times \GL_3)^{\det}$ that stays irreducible on $\SL_3\times\SL_3$. Furthermore, $\pi\otimes\pi$ has a unique 
   $\Delta(\GL_n)$-invariant functional. Since 
   \[ 
   \SL_n= (\GL_3\times \GL_3)^{\det}/\Delta(\GL_n), 
   \] 
$\pi\otimes\pi$ is an $(\GL_3\times \GL_3)^{\det}$-invariant quotient of $C_c(\SL_n)$. 
Let $\pi'$ be another irreducible representation $\pi$ of $\GL_n$ whose restriction to $\SL_n$ is $\pi_1$. Then 
$\pi'\otimes\pi'$ is also a quotient of $C_c(\SL_n)$. Since $\pi_1\otimes\pi_1$ appears as a quotient of $C_c(\SL_n)$ with multiplicity one, it follows that 
$\pi\otimes\pi\cong \pi'\otimes\pi'$, as $ (\GL_3\times \GL_3)^{\det}$-modules.

  \begin{prop} \label{P:gl3_correspondence}Let $\pi$ be an irreducible Whittaker-generic representation of $\GL_3$. 
   Then $\Theta(\pi)\cong \pi\otimes\pi $, and $\Theta(\pi\otimes\pi)=\pi$.  
   \end{prop}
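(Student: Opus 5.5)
We compare the minimal representation with the regular representation of the $\SL_3$-factor, via a model of $\Pi$ restricted to a suitable parabolic, and then use Proposition~\ref{P:two_sl} together with a Whittaker/period count to pin down $\Theta(\pi)$. It is convenient to work with the action of $(\GL_3\times\GL_3)^{\det}$ on $\Pi$ coming from the similitude description in the proof of Proposition~\ref{p:SL3toG2}. With respect to the first $\SL_3$ and $J=V_3\otimes V_3=M_3$, the group $(\GL_3\times\GL_3\times\GL_3)^{\det}/\Delta$ acts on $\Pi$.

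\textbf{Step 1: a $\GL_3^2$-submodule of $\Pi_{U,\psi}$ and of $\Pi$.} By \S\ref{subs_Whittaker_coinv}, $\Pi_{U,\psi_a}\cong C_c(\omega_a)$, where $\omega_a=\{x\in M_3:\det x=a\}$ is an $\SL_3\times\SL_3$-torsor isomorphic to $\SL_3$. I would upgrade this to a statement about $\Pi$ itself: using the Heisenberg parabolic of $E_6$ (or the Siegel-type model of Theorems~\ref{prop:msE6}, \ref{prop:msE8}), $\Pi$ contains as a $\GL_3\times(\GL_3\times\GL_3)^{\det}$-submodule the space $C_c(M_3^{\circ})$ of functions on invertible matrices, with the quotient supported on singular matrices. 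Here the first $\GL_3$ acts by left translation twisted by a character, and the other two act by $x\mapsto gxh^{\top}$.

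\textbf{Step 2: compute $\Theta$ on the open piece.} Taking $\pi^\vee$-coinvariants for the first $\SL_3$ on $C_c(M_3^\circ)$ reduces fiberwise over $\det$ to $C_c(\SL_3)$. Proposition~\ref{P:two_sl} then gives $(C_c(\SL_3)\otimes\pi^\vee)_{\SL_3}\cong\pi$, with the residual group acting on it. Reassembling over $F^\times$ (the determinant twist matches central characters), the contribution is $\pi\otimes\pi$ as a $(\GL_3\times\GL_3)^{\det}$-module. The discussion after Proposition~\ref{P:two_sl} shows that this is well defined, independent of the $\GL_3$-extension of $\pi|_{\SL_3}$.

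\textbf{Step 3: kill the boundary.} We must show that the singular-matrix part contributes nothing to $\Theta(\pi)$ for generic $\pi$, and that nothing extends. The boundary is filtered by $\GL_3\times\GL_3$-orbits of rank $1$ and $2$ matrices, and its contributions are parabolically induced from the maximal parabolics. Hence their $\pi^\vee$-coinvariants are computed by Jacquet modules of $\pi$ with respect to $P$ and $\bar P$, exactly as in Theorem~\ref{T:three_step} with $d=1$. Genericity of $\pi$ is what excludes the exponents $|\cdot|^{3},|\cdot|^{3}$ and $|\cdot|^d$ appearing there. For cuspidal $\pi$ the boundary vanishes outright. For non-cuspidal generic $\pi$ this fails, and I would instead combine Proposition~\ref{P:theta_nc} and Corollary~\ref{P:small_e6} with the mini-theta correspondence $\Theta(\tau)=\tau\otimes\tau$ from Appendix~\ref{S:mini} to identify $\Theta(\pi)$ with $\Ind(\tau)\otimes\Ind(\tau)=\pi\otimes\pi$.

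\textbf{Step 4: the reverse direction.} To get $\Theta(\pi\otimes\pi)=\pi$, I would use the $S_3$-symmetry of $\mathfrak e$ and run the same computation with the roles of the factors exchanged. Taking $\pi^\vee\otimes\pi^\vee$-coinvariants of $C_c(M_3^\circ)$ over $\SL_3\times\SL_3$ yields $\pi$, by Frobenius reciprocity $C_c(\SL_3\times\SL_3/\Delta)$ together with the multiplicity-one of $\Delta$-invariant functionals on $\pi\otimes\pi^\vee$.

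The main obstacle is Step 3 for non-cuspidal $\pi$ at reducibility points. There, extensions between the open piece and the boundary could enlarge $\Theta(\pi)$. I expect to handle this by the two-parabolic trick used in Proposition~\ref{P:triviallifts}: bound $\Theta(\pi)$ above via both $P$ and $\bar P$, and use irreducibility of generic standard modules.
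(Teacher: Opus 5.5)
Your proposal has a genuine gap: Step 1 asserts a structure that does not exist, and everything after it rests on it. There is no $\GL_3\times(\GL_3\times\GL_3)^{\det}$-submodule $C_c(M_3^{\circ})\subset\Pi$ with the actions you describe. Left translation by the first $\GL_3$ on $M_3^\circ$ does not commute with $x\mapsto gxh^{\top}$, so these formulas do not even define an action of the product. Twisting by a character does not fix this. Also, Propositions \ref{prop:msE6} and \ref{prop:msE8} describe the minimal representations of $E_7$ and $E_8$, not $E_6$.

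What \S\ref{subs_Whittaker_coinv} actually provides is $\Pi_{U,\psi_a}\cong C_c(\omega_a)$. This holds only \emph{after} taking twisted coinvariants for one $\SL_3$ factor, which then no longer acts. Consequently:
\begin{enumerate}
\item Step 2 computes nothing about $\Theta(\pi)$. Even formally, Proposition \ref{P:two_sl} produces one copy of $\pi$ for a single $\GL_3$, not $\pi\otimes\pi$.
\item The boundary filtration of Step 3 has no meaning, and Step 4 inherits the same problem.
\item Your fallback via Proposition \ref{P:theta_nc} only exhibits $\Theta(\sigma)$ as a quotient of an induced representation, under hypotheses on $\tau$. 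It cannot rule out extra constituents for all generic $\pi$, which is exactly the obstacle you flag.
\end{enumerate}

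The missing idea is to take Whittaker coinvariants on the \emph{target} side. Let $(U,\psi)$ be a Whittaker datum in one of the two $\SL_3$'s acting on $\Theta(\pi)$. By the $S_3$-symmetry, $\Pi_{U,\psi}\cong C_c(\SL_3)$ is the regular representation of the source $\SL_3$ and the other target $\SL_3$. Proposition \ref{P:two_sl} then gives $\Theta(\pi)_{U,\psi}\cong\pi$.

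Now do this for both target factors and use exactness of $(\cdot)_{U,\psi}$. This shows that $\pi\otimes\pi$ occurs exactly once in $\Theta(\pi)$. Every other constituent $\pi_1\otimes\pi_2$ has both $\pi_1$ and $\pi_2$ Whittaker-degenerate. By the K\"unneth formula, $\Ext^1(\pi_1\otimes\pi_2,\pi\otimes\pi)=0$. So if such constituents existed, one of them would be a quotient of $\Theta(\pi)$. Then $\pi\otimes\pi_1\otimes\pi_2$ would be a quotient of $\Pi$, contradicting $\theta(\pi_1)\cong\pi_1\otimes\pi_1$ from Corollary \ref{P:small_e6}.

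This argument treats cuspidal and non-cuspidal generic $\pi$ uniformly, including reducibility points, with no boundary analysis at all. The reverse statement $\Theta(\pi\otimes\pi)=\pi$ then follows from the first.
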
 
  \begin{proof}  Let $(U,\psi)$ be a Whittaker datum for one of the two $\GL_3$'s acting on $\Theta(\pi)$. Then
  \[ 
   \Pi_{U, \psi}=C_c(\SL_3),
  \] 
  the regular representation of $\SL_3$. By Proposition \ref{P:two_sl}, $\Theta(\pi)_{U,\psi}\cong \pi$, for the other $\GL_3$. We can repeat 
 this argument, switching the roles of the two $\GL_3$. It follows that $\pi\otimes \pi$ is a subquotient of $\Theta(\pi)$ where all other irreducible subquotients $\pi_1\otimes\pi_2$ involve Whittaker-degenerate representations $\pi_1$ and $\pi_2$ of $\GL_3$. 
 By the K\"uneth formula, 
 \[ 
 \Ext^1(\pi_1\otimes\pi_2, \pi\otimes\pi) \cong  \Hom(\pi_1, \pi) \otimes  \Ext^1(\pi_2, \pi) 
 \oplus  \Ext^1(\pi_1, \pi) \otimes  \Hom(\pi_2, \pi) =0. 
 \] 
 Hence there exists a degenerate quotient $\pi_1\otimes\pi_2$ of $\Theta(\pi)$.  This in turn implies that $\pi_1\otimes\pi_2\otimes \pi$ is an 
$\SL_3^3$-invariant quotient,  contradicting $\theta(\pi_1)\cong\pi_1\otimes\pi_1$, which is guaranteed by Proposition \ref{P:small_e6}. 

The other direction follows from the first.  
\end{proof} 
 
We can now formulate the main result about the correspondence in the adjoint $E_6$, where $\SL_3$ forms a dual pair with the group $(\GL_3\times \GL_3)^{\det}/\Delta \GL_1$. 

\begin{thm}  Let $\pi$ be an irreducible representation of $\GL_3$. The small theta correspondence gives a bijection between irreducible 
$\SL_3$-summands of $\pi$ and irreducible $(\GL_3\times \GL_3)^{\det}$-summands of $\pi\otimes\pi$.  
\end{thm}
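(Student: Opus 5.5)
The plan is to prove the theorem in two stages. First I will treat Whittaker-generic $\pi$ using Proposition \ref{P:gl3_correspondence}. Then I will reduce the general case to degenerate representations and use Corollary \ref{P:small_e6}. Throughout I work with $\SL_3\times(\GL_3\times\GL_3)^{\det}/\Delta\GL_1$ inside adjoint $E_6$. The restriction of $\pi$ to $\SL_3$ is a finite multiplicity-free direct sum $\pi|_{\SL_3}=\pi_1\oplus\cdots\oplus\pi_s$, with $s\in\{1,3\}$. The summands are permuted transitively by $\GL_3$, through the quotient $\GL_3/\SL_3\GL_1=F^\times/(F^\times)^3$. Likewise, $\pi\otimes\pi$ restricted to $(\GL_3\times\GL_3)^{\det}$ decomposes into summands indexed by the same kind of cube-class data. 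The aim is to match the two decompositions.

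\textbf{Step 1: the generic case.} Suppose $\pi$ is Whittaker-generic. Then Proposition \ref{P:gl3_correspondence} gives $(\Pi\otimes\pi^\vee)_{\SL_3}\cong\pi\otimes\pi$, and this decomposes according to the summands $\pi_i$ of $\pi^\vee|_{\SL_3}$:
\[
\Theta(\pi_i)=(\Pi\otimes\pi_i^\vee)_{\SL_3},\qquad \bigoplus_i\Theta(\pi_i)\cong\pi\otimes\pi .
\]
Each $\Theta(\pi_i)$ is therefore a summand of $\pi\otimes\pi$, hence semisimple, so $\theta(\pi_i)=\Theta(\pi_i)$. The $\Theta(\pi_i)$ are nonzero: exactly one $\pi_i$ is $\psi$-generic for a given Whittaker datum $\psi$, and the Whittaker coinvariant computation $\Pi_{U,\psi}=C_c(\SL_3)$ from the proof of Proposition \ref{P:gl3_correspondence} shows that this $\pi_i$ lifts nontrivially. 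Conjugating by $\GL_3$, which permutes both sides compatibly, shows the same for every $\pi_i$. Next I need that each $\Theta(\pi_i)$ is irreducible and that different $i$ give non-isomorphic lifts. For this I would run the same argument from the other side, computing $\Theta(\pi\otimes\pi)=\pi$, or simply count. The summands of $\pi\otimes\pi$ under $(\GL_3\times\GL_3)^{\det}$ correspond to $\Delta\GL_3$-orbits, and Proposition \ref{P:two_sl} with $C_c(\SL_3)$ shows that each $\SL_3\times\SL_3$-constituent $\pi_i\otimes\pi_j$ pairs only with $\pi_i$. Hence the number of irreducible summands is $s$, they are mutually non-isomorphic (multiplicity one in $C_c(\SL_3)$), and they are matched bijectively.

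\textbf{Step 2: the degenerate case.} Suppose $\pi$ is not generic, so it is a character twist of a degenerate representation. Then the $\SL_3$-summands are irreducible degenerate representations, and Corollary \ref{P:small_e6} gives $\theta(\pi_i)=\pi_i\otimes\pi_i$. What remains is to identify $\pi_i\otimes\pi_i$ with an $(\GL_3\times\GL_3)^{\det}$-summand of $\pi\otimes\pi$. Here I use the observation after Proposition \ref{P:two_sl}: extending $\pi_i$ to any $\pi'$ on $\GL_3$ gives the same module $\pi'\otimes\pi'$, and these are the summands of $\pi\otimes\pi$. Injectivity then follows from the fact that $\pi_i\otimes\pi_i$ determines $\pi_i$.

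\textbf{Main obstacle.} The hard part is the bookkeeping in Step 1, namely showing that $\Theta(\pi_i)$ is irreducible rather than a sum of several summands when $s=3$. I would handle this with the see-saw and Whittaker argument. The $\psi$-Whittaker coinvariants of $\Theta(\pi_i)$, taken for one of the $\GL_3$ factors, are computed via $C_c(\SL_3)$ and Proposition \ref{P:two_sl} to be exactly $\pi_i$ as an $\SL_3$-module, rather than all of $\pi$. This pins down a single summand.
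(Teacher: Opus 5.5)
Your Step 1 is an acceptable self-contained substitute for the paper's appeal to \cite{BGS24}. Once $\bigoplus_i\Theta(\pi_i)\cong\pi\otimes\pi$, the computation $\Theta(\pi_i)_{U',\psi'}\cong\pi_i$ (from $\Pi_{U',\psi'}\cong C_c(\SL_3)$) singles out one multiplicity-free summand for each $i$. There is one small slip: $s$ need not lie in $\{1,3\}$. It is the number of cubic characters fixing $\pi$, and it can be $9$ when $|F^\times/(F^\times)^3|=9$. Nothing in your argument uses $s\le 3$, however.

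The genuine gap is in Step 2. For degenerate $\pi$ the restriction to $\SL_3$ is irreducible, say $\pi_1$. Then $\pi\otimes\pi$ is irreducible under $(\GL_3\times\GL_3)^{\det}$, so both sides of the bijection are singletons. The entire content of the theorem in this case is therefore that $\theta(\pi_1)\cong\pi\otimes\pi$ \emph{as a $(\GL_3\times\GL_3)^{\det}$-module}. Corollary \ref{P:small_e6} only gives $\theta(\pi_1)\cong\pi_1\otimes\pi_1$ as an $\SL_3\times\SL_3$-module, and that does not determine the extension.

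To see this, take a character $\chi\neq 1$ of $F^\times$ with $\chi^3=1$ and put $\pi\chi:=\pi\otimes(\chi\circ\det)$. The module $\pi\chi\otimes\pi$ restricts to $\pi_1\otimes\pi_1$, and it is trivial on $\Delta\GL_1$, since $(a,a^{-1})$ acts by $\chi(a)^3=1$. Yet it is not isomorphic to $\pi\otimes\pi$. By Schur's lemma for the irreducible $\pi_1\otimes\pi_1$, any isomorphism would be a scalar, and that would force $\chi(\det g)=1$ for all $g$.

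The remark after Proposition \ref{P:two_sl} that you invoke only shows that all modules $\pi'\otimes\pi'$ with $\pi'|_{\SL_3}=\pi_1$ coincide. It does not show that $\theta(\pi_1)$ is of that form rather than some $\pi\chi\otimes\pi$.

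What is missing is a proof that $\pi_1\otimes(\pi\otimes\pi)$, with its full $(\GL_3\times\GL_3)^{\det}$-action, is a quotient of $\Pi$. The paper supplies this as follows. Take a Whittaker datum $(U,\psi)$ for the dual-pair $\SL_3$; then $\Pi_{U,\psi}\cong C_c(\SL_3)$ as a $(\GL_3\times\GL_3)^{\det}$-module. Moreover, $\pi\otimes\pi$ is a $(\GL_3\times\GL_3)^{\det}$-quotient of $C_c(\SL_3)$, because it has an invariant functional for the stabilizer of the identity. Hence $\theta(\pi\otimes\pi)\neq 0$, it must be $\pi_1$, and therefore $\theta(\pi_1)\cong\pi\otimes\pi$.
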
 
 \begin{proof} If $\pi$ is Whittaker-generic, this follows from Proposition \ref{P:gl3_correspondence} and \cite{BGS24}.  
 So assume that $\pi$ is a degenerate representation. Then the restriction to 
 $\SL_3$ is irreducible. Denote it $\pi_1$.   
 By Proposition \ref{P:small_e6}, we have $\theta(\pi_1)\cong \pi_1\otimes\pi_1$, as $\SL_3^2$-modules. We now show that, as a $(\GL_3\times \GL_3)^{\det}$-module, $\theta(\pi_1)$ is the 
 restriction of $\pi\otimes\pi$.  It suffices to show that $\theta(\pi\otimes\pi)\neq 0$, for it will have to be isomorphic to $\pi_1$. 
  Let $(U,\psi)$ be a Whittaker datum for $\SL_3$, so that 
  \[ 
   \Pi_{U, \psi}=C_c(\SL_3),
  \] 
  the regular representation of $\SL_3$. We know that $\pi\otimes\pi$ is a quotient of $C_c(\SL_3)$, and therefore $\theta(\pi\otimes\pi)\neq 0$. 
 \end{proof}

 \begin{prop}  Let $a\in F^{\times}$.  Let $\pi$ be an irreducible representation of $\GL_3$. There is a unique irreducible $(\GL_3\times \GL_3)^{\det}$-summand $\sigma$ of $\pi\otimes\pi$ such that 
 $\Theta(\sigma)_{U,\psi_a}$ is non-zero; for this summand, the coinvariant space is one-dimensional.  
 \end{prop}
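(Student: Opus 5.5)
The plan is to compute the twisted coinvariants $\Theta(\sigma)_{U,\psi_a}$ directly from the see-saw, using the model $\Pi_{U,\psi_a}\cong C_c(G/H_a)$ of \S\ref{subs_Whittaker_coinv}; for the $E_6$ case this reads $\Pi_{U,\psi_a}\cong C_c(\omega_a)$, with $\omega_a$ the set of $3\times 3$ matrices of determinant $a$. Here $(U,\psi_a)$ is a Whittaker datum for the first $\SL_3$, namely the one dual to $(\GL_3\times\GL_3)^{\det}$. Write $\pi\otimes\pi|_{(\GL_3\times\GL_3)^{\det}}=\bigoplus_i\sigma_i$. Exchanging the order of coinvariants gives
\[
\Theta(\sigma_i)_{U,\psi_a}=(\Pi_{U,\psi_a}\otimes\sigma_i^\vee)_{(\GL_3\times\GL_3)^{\det}}\cong (C_c(\omega_a)\otimes\sigma_i^{\vee})_{(\GL_3\times\GL_3)^{\det}},
\]
where $(g,h)$ acts on $\omega_a$ by $x\mapsto gxh^{\top}$. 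This action is transitive, and the stabilizer of a fixed $x_a\in\omega_a$ is a twisted diagonal copy $\Delta_a(\GL_3)=\{(g, x_a^{-1}g^{-\top}\ldots)\}$ conjugate to $\Delta(\GL_3)$. Frobenius reciprocity then identifies the space with $\Hom_{\Delta_a(\GL_3)}(\sigma_i,\mathbb C)^\ast$, up to the usual duality for finite-dimensional spaces.

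The problem therefore reduces to an assertion about $\pi\otimes\pi$. The next step is to note that $\pi\otimes\pi$ (with the second factor taken as the contragredient twist, matching $gxh^\top$) has a unique $\Delta(\GL_3)$-invariant functional up to scalar, namely the canonical pairing. This is the multiplicity one of $\Hom_{\GL_3}(\pi\otimes\pi^\vee,\mathbb C)$ for irreducible $\pi$, i.e.\ Schur's lemma, and the same holds for the conjugate subgroup $\Delta_a(\GL_3)$. As a result,
\[
\sum_i \dim\Hom_{\Delta_a(\GL_3)}(\sigma_i,\mathbb C)=\dim\Hom_{\Delta_a(\GL_3)}(\pi\otimes\pi,\mathbb C)=1,
\]
so exactly one summand $\sigma$ has a nonzero (indeed one-dimensional) coinvariant space. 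The coinvariants for the others vanish. Unlike the $\SL_3$ case of Proposition \ref{P:two_sl}, this holds regardless of whether $\pi$ is generic.

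Care is needed in two places. First, the identification $\Pi_{U,\psi_a}\cong C_c(\omega_a)$ must be compatible with the full $(\GL_3\times\GL_3)^{\det}$-action and not merely with $\SL_3\times\SL_3$. To secure this, I would check that the isomorphism of \S\ref{subs_Whittaker_coinv} is equivariant for the larger group, possibly up to a character that is trivial on the stabilizer. Second, the stabilizer of $x_a$ in $(\GL_3\times\GL_3)^{\det}$ must be exactly a conjugate of the diagonal $\GL_3$. The determinant condition $\det g\det h=1$ is automatic when $gx_ah^\top=x_a$, so this holds.

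I expect the main obstacle to be the equivariance check, i.e.\ ensuring that no unexpected character twist appears. If a twist does appear, it must be trivial on $\Delta_a(\GL_3)$ for the summands to match, and I would handle it by absorbing it into $\pi$.
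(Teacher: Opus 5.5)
Your argument is correct, but it takes a different route from the paper's. The paper splits into two cases.

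For Whittaker-generic $\pi$ it computes nothing directly. It invokes Proposition~\ref{P:gl3_correspondence} and \cite{BGS24}: the summands of $\pi\otimes\pi$ are exactly the lifts $\sigma_i=\Theta(\pi_i)$ of the $\SL_3$-summands $\pi_i$ of $\pi$, with $\Theta(\sigma_i)=\pi_i$. The claim then follows because exactly one $\pi_i$ is $\psi_a$-generic, with a one-dimensional Whittaker space. Only for degenerate $\pi$, where $\pi\otimes\pi$ is irreducible, does the paper use the model of \S\ref{subs_Whittaker_coinv} to get $\Hom_{(\GL_3\times\GL_3)^{\det}}(C_c(\omega_a),\pi\otimes\pi)\cong\mathbb C$.

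You run that degenerate-case computation for every $\pi$ and let additivity over the finite direct sum do the rest: $\bigoplus_i\Hom(C_c(\omega_a),\sigma_i)=\Hom(C_c(\omega_a),\pi\otimes\pi)$ is one-dimensional. This is uniform and independent of \cite{BGS24}. It also explains why the distinguished summand depends on $a$: the element conjugating the twisted diagonal to its $a$-version lies in $\GL_3\times\GL_3$, not in $(\GL_3\times\GL_3)^{\det}$. So it preserves the dimension count for all of $\pi\otimes\pi$ but may permute the summands. What the paper's route gives in the generic case, and yours does not, is the identity of the summand: $\sigma=\Theta(\pi_i)$ for the $\psi_a$-generic $\pi_i$, together with irreducibility of $\Theta(\sigma)$.

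A few small corrections.
\begin{enumerate}
\item Frobenius reciprocity gives $\Theta(\sigma_i)_{U,\psi_a}\cong(\sigma_i^\vee)_{S_a}$, with $S_a$ the stabilizer of $x_a$. So the count should be run on $\Hom_{S_a}(\sigma_i^\vee,\mathbb C)$ and on $(\pi\otimes\pi)^\vee=\pi^\vee\otimes\pi^\vee$, not on $\sigma_i$. Nothing changes in the conclusion, since $\pi^\vee$ is again irreducible.
\item The restriction of $\pi\otimes\pi$ to $\{(g,g^{-\top})\}$ is $\pi\otimes\pi^\vee$ by the Gelfand--Kazhdan realization $\pi^\vee(g)\cong\pi(g^{-\top})$, the fact used in Proposition~\ref{P:two_sl}. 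Schur's lemma alone does not give this.
\item The $(\GL_3\times\GL_3)^{\det}$-equivariance of $\Pi_{U,\psi_a}\cong C_c(\omega_a)$ is the same input the paper uses; it underlies $\Theta(\pi)\cong\pi\otimes\pi$. You may take it as given, but your fallback would not work. Any twist factors through $\chi(\det g)$, which cannot be trivial on $S_a$ unless $\chi=1$. A nontrivial $\chi$ would turn the relevant space into $\Hom_{\GL_3}(\pi,\chi^{-1}\pi)$, which vanishes unless $\pi\cong\chi\pi$, so it cannot be absorbed into $\pi$.
\end{enumerate}
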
 
 \begin{proof} If $\pi$ is Whittaker-generic, let $\pi_1, \ldots, \pi_k$ be its irreducible $\SL_3$-summands. There is only one which is Whittaker-generic for $\psi_a$. Recall that  $\sigma_1=\Theta(\pi_1), \ldots,  \sigma_k=\Theta(\pi_k)$ are precisely the irreducible $(\GL_3\times \GL_3)^{\det}$-summands of $\pi\otimes\pi$. Since $\pi_i=\Theta(\sigma_i)$, we are done in this case.  Assume now that $\pi$ is degenerate and hence that $\pi\otimes\pi$ is an irreducible representation of $(\GL_3\times \GL_3)^{\det}$. 
 Then, by \S \ref{subs_Whittaker_coinv},
 \[ 
  \Theta(\pi\otimes\pi)_{U,\psi_a} \cong \Hom_{(\GL_3\times \GL_3)^{\det}}(C_c(\omega_a), \pi \otimes \pi)\cong \mathbb C,
  \] 
  which finishes the proof.
  \end{proof} 

\begin{rmk}
We take this opportunity to correct an error in \cite[p.~199, last paragraph]{BGS24} which was pointed out to us by Yi Shan.  Namely, using the notation and terminology of \cite{BGS24}, it was asserted there that 
\[ \tilde{J}^{sim} \cap (T(F) \times T(F)) = T^{\nabla}(F).  \]
This should read:
\[ \tilde{J}^{sim} \cap (T(F) \times T(F)) =  (Z(F) \times Z(F)) \cdot T^{\nabla}(F).\]
As such, one cannot extend $\Omega$ to the intermediate group 
\[  \tilde{J}^{sim}(F) \cdot (T(F) \times T(F)). \]
Instead, one needs to consider the representation  $\pi_{Z, \chi_Z}$ which is the $(Z, \chi|_Z)$-isotypic quotient of $\Omega$ (where $Z$ is the central subgroup of $G$ or $H$). One can then extend $\Omega_{Z, \chi|_Z}$ to the intermediate group above by letting $T(F) \times T(F)$ act by $\chi \otimes \chi$. Denoting this extended module by $\Omega_{\chi}$, one can then define $\tilde{\Omega}_{\chi}$  by induction, as in \cite[p.~199, last line]{BGS24}. 
 This does not affect the rest of \textnormal{[op.cit.]}.
 \end{rmk} 
  
   
   \section{The cuspidal support of theta lifts}  \label{S:support}
Let $\sigma$ be a supercuspidal representation of $\SL_3$. We narrow down the cuspidal support of $\Theta(\sigma)$ for the correspondences in $E_7$ and $E_8$. 

\begin{prop} \label{P:support_1} 
 Let $\sigma$ be a supercuspidal representation of $\SL_3$. Let $\Theta(\sigma)$ be the lift to $\SL_6$. 
Let $P=MN$ be a maximal parabolic subgroup of $\SL_6$ such that $M^{\rder}$ is not $\SL_3 \times \SL_3$. Then $\Theta(\sigma)_N=0$. 
\end{prop}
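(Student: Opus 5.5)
Since $\Theta(\sigma)=(\Pi\otimes\sigma^{\vee})_{\SL_3}$ and Jacquet functors commute with taking $\SL_3$-coinvariants, we have
\[
\Theta(\sigma)_N\cong (\Pi_N\otimes\sigma^{\vee})_{\SL_3}.
\]
So it suffices to show that $\Pi_N$, viewed as an $\SL_3\times M$-module, has no quotient on which $\SL_3$ acts through the supercuspidal $\sigma$. Because $\sigma$ is supercuspidal, it is projective and injective in the category of smooth $\SL_3$-representations with its central character. Hence it is enough to find a finite $\SL_3\times M$-stable filtration of $\Pi_N$ in which no subquotient admits $\sigma$ as an $\SL_3$-quotient. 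Exactness of $\Hom_{\SL_3}(-,\sigma)$ then does the rest.

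To get that filtration, I would realize $N$ inside a maximal parabolic $\mathcal P=\mathcal M\mathcal N$ of $E_7$ whose Levi contains $\SL_3\times M$, compatibly with the $\mathbb Z/3$-grading. Concretely, I take a cocharacter $h$ of the center of $M\subset\SL_6$. It commutes with $\SL_3$, and its grading of $\mathfrak e=\mathfrak{sl}_3\oplus\mathfrak{sl}_6\oplus V_3\otimes\wedge^2V_6\oplus V_3^*\otimes\wedge^2V_6^*$ gives a parabolic $\mathcal P$ with $\mathcal N\cap \SL_6=N$ and $\SL_3\subset\mathcal M$. The remaining pieces of $\mathcal N$ are the positive-weight parts $V_3\otimes(\wedge^2V_6)_{>0}$ and $V_3^*\otimes(\wedge^2V_6^*)_{>0}$. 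The maximal parabolics of $\SL_6$ are those with Levi of type $A_0\times A_4$, $A_1\times A_3$, or $A_2\times A_2$ (up to the symmetric ones). In the first two cases I expect the following.
\begin{itemize}
\item $N$ is the full unipotent radical of $\mathcal P$ modulo a part on which $\Pi$ is analyzed via the usual filtration: the top piece $\Pi_{\mathcal N}$ (known from \cite{SW15}/\cite{GS99}), plus pieces coming from minimal-orbit characters of the abelianized or Heisenberg-type quotient of $\mathcal N/N$.
\item On each such piece, $\SL_3$ acts either through a character-type module of $\mathcal M$ or geometrically on functions on an orbit whose $\SL_3$-stabilizer contains the unipotent radical of a proper parabolic of $\SL_3$.
\end{itemize}
Concretely, a minimal-orbit element of $V_3^*\otimes(\wedge^2 V_6^*)_{>0}$ is a pure tensor $v\otimes z$, whose stabilizer in $\SL_3$ is contained in a maximal parabolic of $\SL_3$ and contains its unipotent radical. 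The corresponding piece is then $\SL_3$-parabolically induced, so $\Hom_{\SL_3}(\cdot,\sigma)=0$ by Frobenius reciprocity and cuspidality. The top piece $\Pi_{\mathcal N}$ is a module for $\mathcal M$ in which $\SL_3$ sits inside a smaller exceptional or classical group's minimal representation ($E_6$, or $\Spin$-type); it is again handled by restricting to $\SL_3$ and using the analogous orbit analysis, or by the known structure (trivial or minimal representations), whose restrictions to $\SL_3$ have no supercuspidal quotients.

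The exceptional case $A_2\times A_2$ is exactly where this fails. There the grading contains a piece $V_3\otimes(V_3'\otimes V_3'')$ whose generic orbit has $\SL_3$-stabilizer not containing any unipotent radical; this matches the $E_6$ three-$\SL_3$ picture of Section \ref{S:3SL3}.

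The main obstacle will be the bookkeeping of the $h$-grading for $A_1\times A_3$. There I must check that every $\mathcal M$-orbit contributing to the minimal-representation filtration (through characters of $\mathcal N/[\mathcal N,\mathcal N]$ and the Heisenberg pieces, as in Theorem \ref{T:three_step}) yields an $\SL_3$-stabilizer containing a unipotent radical. For the Heisenberg-type pieces, I would identify the Weil-representation action and observe that $\SL_3$ acts through a Levi-type subgroup $\GL_2$, which again forces induction from a proper parabolic.
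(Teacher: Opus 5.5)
Your skeleton is the paper's. You take $h$ central in $\mathfrak m$, get a parabolic $\mathcal P=\mathcal M\mathcal N$ of $E_7$ with $\SL_3\subset\mathcal M$, filter $\Pi$ by $\Pi/\Pi_i=\Pi_{\mathcal N_i}$, and check that no piece of $\Pi_N$ has a supercuspidal $\SL_3$-constituent. Your projectivity/injectivity device is the same as the paper's ``a subquotient is also a sub''.

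However, your treatment of the top piece $\Pi_{\mathcal N}$ is wrong as stated.
\begin{itemize}
\item For $M^{\rder}=\SL_5$, resp.\ $\SL_4\times\SL_2$, one has $\mathcal M^{\rder}=\SL_3\times\SL_5$, resp.\ $\SL_3\times\SL_4\times\SL_2$. No $E_6$ or $\Spin$ factor occurs.
\item The principle you invoke, that minimal representations of the Levi restrict to $\SL_3$ without supercuspidal quotients, is false in general. An $E_6$ Levi is exactly what appears in the excluded $A_2\times A_2$ case: there $V_3\otimes V_3'\otimes V_3''$ has $h$-weight $0$, so it lies in $\mathcal M$, not in $\mathcal N$ as you say. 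By \S\ref{S:3SL3}, the minimal representation of $E_6$ has every supercuspidal of $\SL_3$ as a quotient. That is the real reason the $A_2\times A_2$ case fails.
\item What actually handles the top piece is the paper's argument: $\Pi$ is Iwahori-spherical, so $\Pi_{\mathcal N}$ is a finite-length $\mathcal M$-module generated by Iwahori-fixed vectors. Since $\SL_3$ is a direct factor of $\mathcal M^{\rder}$, no supercuspidal $\SL_3$-constituent can occur.
\end{itemize}

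Second, for the geometric pieces $C_c(\Omega,\mathcal V)$ it is not enough that the $\SL_3$-stabilizer $S_x$ of $x\in\Omega$ contains the unipotent radical $U_x$ of a maximal parabolic $P_x$. To identify $C_c(\omega_x,\mathcal V)$ with $\Ind_{P_x}^{\SL_3}(\ind_{M_x^{\rder}}^{M_x}\mathcal V_x)$, you need $U_x$ to act trivially on the stalk $\mathcal V_x$. That stalk is a module for the whole stabilizer of $x$ and is in general infinite-dimensional, not a character of $S_x$. This is where minimality of $\Pi$ genuinely enters (the paper: the wavefront set of $\Pi$ is the minimal orbit), and your proposal never supplies it.

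Relatedly, the bookkeeping is more specific than your sketch.
\begin{itemize}
\item In the $A_4$ case, $N=\mathcal N_3$ is the last step of the filtration. So $\Pi_N=\Pi/\Pi_3$ consists exactly of $\Pi_{\mathcal N}$ and two abelian pieces, supported on the minimal-orbit (pure tensor) elements of $\mathfrak e_{-1}=V_3^*\otimes\wedge^2V_5^*$ and $\mathfrak e_{-2}=V_3\otimes V_5$. No Heisenberg pieces are needed.
\item In the $A_1\times A_3$ case (which the paper also leaves to the reader), $N$ maps isomorphically onto $\mathcal N_3/\mathcal N_4$, so $(\Pi_3/\Pi_4)_N=0$. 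The extra piece comes from the central $\mathcal N_4\cong V_3\otimes\wedge^2V_2$. It is supported on $V_3^*\smallsetminus\{0\}$, a single $\SL_3$-orbit with mirabolic stabilizer.
\item So what is needed there is again triviality of $U_x$ on the stalk. It is not a Weil-representation argument with $\SL_3$ acting ``through a Levi-type $\GL_2$''.
\end{itemize}
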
 
\begin{proof} We need to prove that $\Pi_N$ contains no supercuspidal representations of $\SL_3$. To that end, 
 we shall construct a maximal parabolic subgroup $\mathcal P=\mathcal M\mathcal N$ in the ambient group that intersects $\SL_3$ in $P$ 
and describe the restriction of $\Pi$ to $\mathcal P$.   

Let $V_n$ denote the standard $n$-dimensional representation of $\SL_n$. 
Recall that 
\[ 
\mathfrak e = \mathfrak{sl}_3 \oplus \mathfrak{sl}_6 \oplus V_3 \otimes \wedge^2 V_6  \oplus V_3^{\ast} \otimes \wedge^2 V_6^{\ast}. 
\] 

Assume first that $M^{\rder} = \SL_5$.  We can realize $P=MN$ so that $M$ is the centralizer of
\[ 
h=\frac{1}{2}\left(
\begin{array}{cccccc}  
1 & 0 & 0 & 0 & 0 & 0 \\ 
0 & 1 & 0 & 0 & 0 & 0 \\ 
0 & 0 & 1 & 0 & 0 & 0 \\ 
0 & 0 & 0 & 1 & 0 & 0 \\ 
0 & 0 & 0 & 0 & 1 & 0 \\ 
0 & 0 & 0 & 0 & 0 & -5 
\end{array} \right) \in \mathfrak{sl}_6, 
\] 
while the Lie algebra of $N$ is the eigenspace of $h$, for the adjoint action, with eigenvalue $3$. The desired $\mathcal P$ comes 
similarly from $h$,  considered as an element of $\mathfrak{sl}_6\subset \mathfrak e$. More precisely, $\mathcal M$ is the centralizer of $h$. Note that 
$\mathcal M^{\rder}= \SL_3 \times \SL_5$. The unipotent radical $\mathcal N$ is described using the 
$\mathbb Z$-grading $\mathfrak e= \oplus_i \mathfrak e_i$ coming from the adjoint action of $h$ on $\mathfrak e$. 
To work out the grading, observe that 
\[ 
\wedge^2 V_6= \wedge ^2 V_5  \oplus V_5,
\] 
as an $\SL_5$-module, 
where $h$ acts on the two summands by eigenvalues $1$ and $-2$, respectively.  
Thus the unipotent radical is a $3$-step unipotent subgroup 
\[ 
\mathcal N=\mathcal N_1 \supset \mathcal N_2 \supset \mathcal N_3 \supset \mathcal N_4=0
\] 
such that $\mathcal N_i/\mathcal N_{i+1} \cong \mathfrak e_i$ for $1\leq i\leq 3$.  Each of these subquotients is naturally an $\mathcal M$-module. As $\SL_3 \times \SL_5$-modules, they are:
\[ 
\mathfrak e_1= V_3 \otimes \wedge^2V_5,  \, \mathfrak e_2= V_3^{\ast} \otimes V_5^{\ast},  \, \mathfrak e_3 = \mathbb C \otimes V_5. 
\] 
Observe that $\mathcal N_3=N$. The minimal representation has a $\mathcal P$-filtration 
\[ 
\Pi=\Pi_0 \supset \Pi_1 \supset \Pi_2 \supset \Pi_3 \supset \Pi_4=0 
\] 
such that $\Pi/\Pi_i = \Pi_{\mathcal N_i}$.  Now each quotient $\Pi_i/\Pi_{i+1}$ is a $\mathcal P$-module, so it suffices to show that
$(\Pi_i/\Pi_{i+1})_N$ nas no supercuspidal $\SL_3$ subquotients for $0\leq i \leq3$.  We work from the bottom. 
Since $N=\mathcal N_3$, we have $(\Pi_3/\Pi_4)_N=0$, and there is nothing to prove here. Now assume $0< i<3$. 
Observe that $\mathcal N_{i+1}$ acts trivially on $\Pi_i/\Pi_{i+1}$, hence 
$\Pi_i/\Pi_{i+1}$ is a module for the abelian group 
$\mathcal N_i/\mathcal N_{i+1}$.  Characters $\psi_x$ of this group are naturally parameterized by elements $x\in \bar{\mathcal N}_i/\bar{\mathcal N}_{i+1}\cong \mathfrak e_{-i}$  
where $\bar{\mathcal N}_i$ is the filtration of the opposite unipotent subgroup $\bar{\mathcal N}$.  Thus, if we abbreviate $V= \Pi_i/\Pi_{i+1}$, then $V$ defines a sheaf $\mathcal V$ on 
$\bar{\mathcal N}_i/\bar{\mathcal N}_{i+1}$ whose stalks are 
\[ 
\mathcal V_x= V_{\mathcal N_i/\mathcal N_{i+1}, \psi_x}. 
\] 
Since $\Pi$ is minimal, this sheaf is supported on the minimal $\mathcal M$-orbit $\Omega$ in $\bar{\mathcal N}_i/\bar{\mathcal N}_{i+1}$. By Bernstein-Zelevinsky \cite[Proposition 1.14]{BZ},
$V$ is equal to the space of all compactly supported, locally constant sections of this sheaf. We write this as 
\[ 
V=C_c(\Omega, \mathcal V). 
\] 
Let us look at this for $\Pi_2/\Pi_3$. Here $\bar{\mathcal N}_2/\bar{\mathcal N}_{3} = V_3 \otimes V_5$  and $\Omega$ is the set of non-zero 
pure tensors. Let $x\in \Omega$, and let $\omega_x$ be the $\SL_3$-orbit of $x$. If $\sigma$ is a subquotient of $V= \Pi_2/\Pi_3$, it is also a sub, and 
the composite of natural maps 
\[ 
\sigma \rightarrow C_c(\Omega, \mathcal V) \rightarrow  C_c(\omega_x, \mathcal V)
\] 
is non-zero for some $x\in \Omega$. The stabilizer of $x$ in $\SL_3$ is the derived group $P_x^{\rder}$ of a maximal parabolic subgroup $P_x=M_xU_x$. 
The stabilizer acts naturally on $\mathcal V_x$. Since the wavefront set of $\Pi$ is the minimal orbit, the unipotent radical $U_x$ must act trivially. Hence 
\[ 
C_c(\omega_x, \mathcal V)= \Ind_{P_x}^{\SL_3} (\ind_{M_x^{\rder}}^{M_x}( \mathcal V_x)). 
\] 
Since $\sigma$ is a submodule of this induced representation, by Frobenius reciprocity, it follows that $\sigma_{U_x} \neq 0$, a contradiction.  

\smallskip  
The subquotient  $\Pi_1/\Pi_2$ is treated in the same way. Indeed, $\bar{\mathcal N}_1/\bar{\mathcal N}_{2} = V_3^{\ast} \otimes \wedge^2 V_5^{\ast}$  
and $\Omega$ is the set of non-zero pure tensors $y\otimes z$ where $z$ is a highest weight vector in $ \wedge^2 V_5^{\ast}$.  

\smallskip 

It remains to deal with $\Pi_0/\Pi_1$. This quotient is the Jacquet functor $\pi_{\mathcal N}$. It is a finite-length 
($\SL_3 \times \SL_5$)-module generated by Iwahori-fixed vectors. Thus no supercuspidal $\SL_3$ representations can appear here. 

\smallskip 

Assume now that $M^{\rder} = \SL_4\times \SL_2$.  In this case we realize $P=MN$  so that $M$ is the centralizer of 
\[ 
h=\left(
\begin{array}{cccccc}  
-1 & 0 & 0 & 0 & 0 & 0 \\ 
0 & -1 & 0 & 0 & 0 & 0 \\ 
0 & 0 & -1 & 0 & 0 & 0 \\ 
0 & 0 & 0 & -1 & 0 & 0 \\ 
0 & 0 & 0 & 0 & 2 & 0 \\ 
0 & 0 & 0 & 0 & 0 & 2
\end{array} \right) \in \mathfrak{sl}_6, 
\] 
while the Lie algebra of $N$ is the eigenspace of $h$, for the adjoint action, with eigenvalue $3$. The desired $\mathcal P$ comes from $h$, viewed as an element of $\mathfrak{sl}_6\subset \mathfrak e$. Again, $\mathcal M$ is the centralizer of $h$. Note that 
$\mathcal M^{\rder}= \SL_3 \times \SL_4\times \SL_2$. The unipotent radical $\mathcal N$ is described using the 
$\mathbb Z$-grading $\mathfrak e= \oplus_i \mathfrak e_i$ coming from the adjoint action of $h$ on $\mathfrak e$. To work out the grading, observe that 
\[ 
\wedge^2 V_6= \wedge ^2 V_4 \otimes \mathbb C \oplus V_4 \otimes V_2 \oplus \mathbb C\otimes \wedge^2 V_2, 
\] 
as an ($\SL_4\times \SL_2$)-module, where $h$ acts on the three summands by eigenvalues $-2,  1$, and $4$, respectively.  
The unipotent radical of $\mathcal P$ is a $4$-step unipotent subgroup 
\[ 
\mathcal N=\mathcal N_1 \supset \mathcal N_2 \supset \mathcal N_3 \supset \mathcal N_4\supset \mathcal  N_5=0
\] 
such that $\mathcal N_i/\mathcal N_{i+1} \cong \mathfrak e_i$ for $1\leq i\leq 4$.  Each of these subquotients is naturally an $\mathcal M$-module.As ($\SL_3 \times \SL_4 \times \SL_2$)-modules, they are:  
\[ 
\mathfrak e_1= V_3 \otimes V_4\otimes V_2,  \, 
\mathfrak e_2= V_3^{\ast} \otimes \wedge^2 V_4^{\ast} \otimes \mathbb C ,  \, 
\mathfrak e_3 = \mathbb C \otimes V_4^{\ast}\otimes V_2, \, 
\mathfrak e_4 = V_3 \otimes  \mathbb C \otimes \wedge^2V_2. 
\] 
Observe that $\mathcal N_3/\mathcal N_{4} \cong \mathfrak e_3\cong N$. The proof here proceeds in the same way as for the other class of parabolics. 
We leave the details to the reader. 
\end{proof} 

\begin{prop} \label{P:support_2} 
 Let $\sigma$ be a supercuspidal representation of $\SL_3$. Let $\Theta(\sigma)$ be the lift to $E_6$. 
Let $P=MN$ be a maximal parabolic subgroup of $E_6$ such that $M^{\rder}$ is not $\SL_3 \times \SL_3 \times \SL_2$ or 
$\SL_6$. Then $\Theta(\sigma)_N=0$. 
\end{prop}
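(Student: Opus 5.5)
We follow the strategy of Proposition \ref{P:support_1}. Since $\Theta(\sigma)_N=(\Pi_N\otimes\sigma^{\vee})_{\SL_3}$, it suffices to show that, for each maximal parabolic $P=MN$ of $E_6$ with $M^{\rder}\notin\{\SL_3\times\SL_3\times\SL_2,\ \SL_6\}$, the Jacquet module $\Pi_N$ has no supercuspidal $\SL_3$-subquotients. The maximal Levis of $E_6$ are of types $D_5$, $A_5$, $A_4\times A_1$, $A_2\times A_2\times A_1$ and $D_4$-adjacent ones; up to the diagram symmetry, the cases to treat are $M^{\rder}=\Spin_{10}$ (nodes $1,6$) and $M^{\rder}=\SL_5\times\SL_2$ (nodes $3,5$). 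For each case we pick $h\in\mathfrak g=\mathfrak e_6$ whose centralizer is $M$ and whose eigenvalue-$3$ part (suitably normalized) is $\mathrm{Lie}(N)$. We then view $h$ inside $\mathfrak e=\mathfrak{sl}_3\oplus\mathfrak e_6\oplus V_3\otimes V_{27}\oplus V_3^{\ast}\otimes V_{27}^{\ast}$, obtaining a parabolic $\mathcal P=\mathcal M\mathcal N$ of $E_8$ with $\mathcal M^{\rder}=\SL_3\times M^{\rder}$ and $\mathcal P\cap E_6=P$.

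To compute the grading, we decompose $V_{27}$ under $M$. For $D_5$ it splits as $1\oplus 10\oplus 16$, so $\mathfrak e_i$ consists of $V_3$ or $V_3^{\ast}$ tensored with one of these pieces, together with the $\mathfrak g$-part, which is the spinor $16$ placed in the degree carrying $N$. For $A_4\times A_1$ one gets $V_{27}=\wedge^2V_5\oplus V_5^{\ast}\otimes V_2\oplus V_5\otimes\mathbb C$ (or a similar split). We then list the $\mathfrak e_i$ and locate $N=\mathfrak g\cap\bigoplus_{i\ge k}\mathfrak e_i$; ideally $N=\mathcal N_k$ for the appropriate step. As in Proposition \ref{P:support_1}, we filter $\Pi$ by $\Pi_i$ with $\Pi/\Pi_i=\Pi_{\mathcal N_i}$ and note that pieces lying below $N$ are killed by $(\cdot)_N$.

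Each remaining intermediate graded piece $\Pi_i/\Pi_{i+1}$ is a module over the abelian group $\mathcal N_i/\mathcal N_{i+1}$ (or a Heisenberg quotient). By \cite[Proposition 1.14]{BZ} and minimality of $\Pi$, it equals $C_c(\Omega,\mathcal V)$, where $\Omega$ is the minimal $\mathcal M$-orbit in $\mathfrak e_{-i}$. When $\mathfrak e_{-i}=V_3^{(\ast)}\otimes W$, this orbit consists of pure tensors $v\otimes w$ with $w$ a highest weight vector. The $\SL_3$-stabilizer of such a point is $P_x^{\rder}$ for a maximal parabolic $P_x=M_xU_x$, and $U_x$ acts trivially on the stalk because the wavefront set of $\Pi$ is minimal. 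Hence restriction to $\SL_3$-orbits expresses every $\SL_3$-constituent as a submodule of $\Ind_{P_x}^{\SL_3}(\cdots)$, and Frobenius reciprocity excludes supercuspidals. The top piece $\Pi_{\mathcal N}$ is a finite-length module generated by Iwahori-fixed vectors, so it also contains no supercuspidals. Pieces with no $\SL_3$-charge (of the form $\mathbb C\otimes W$) carry an $\SL_3$ action that is not localized on such orbits; there we must check that the relevant degree lies at or below the level of $N$, as happened for $\mathfrak e_3$ in Proposition \ref{P:support_1}.

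The main obstacle is exactly this bookkeeping. One must verify, in each excluded case, that every graded piece strictly between the top and $N$ is $\SL_3$-charged (of the form $V_3^{(\ast)}\otimes W$), and also treat any two-step (Heisenberg) pieces. For the latter, we would follow Theorem \ref{T:three_step}: the center's nonzero characters are supported on a minimal orbit of $\SL_3$-charged vectors, and the stabilizers are again parabolic-type. The two excluded Levis $\SL_3^2\times\SL_2$ and $\SL_6$ should be precisely the cases in which an uncharged piece such as $\mathfrak{sl}_3$-trivial $\otimes\, W$ with dense $\SL_3$-orbits of reductive stabilizer appears above $N$; this confirms that the computation separates the cases correctly.
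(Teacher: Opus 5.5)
Your strategy is the paper's: the same $h$, the same parabolic $\mathcal P=\mathcal M\mathcal N$ of $E_8$, the filtration $\Pi/\Pi_i=\Pi_{\mathcal N_i}$, and the orbit/stabilizer argument. However, the paper's proof consists precisely of the bookkeeping you postpone, and the parts of it you do state are wrong.

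First, in neither remaining case is $N$ equal to some $\mathcal N_k$, so ``pieces below $N$ are killed by $(\cdot)_N$'' fails.
\begin{itemize}
\item For $\Spin_{10}$, $V_{27}=V_{10}\oplus V_{16}\oplus\mathbb C$ with $h$-eigenvalues $-2,1,4$. This gives $\mathfrak e_1=V_3\otimes V_{16}$, $\mathfrak e_2=V_3^{\ast}\otimes V_{10}^{\ast}$, $\mathfrak e_3\cong\mathrm{Lie}(N)$ and $\mathfrak e_4=V_3\otimes\mathbb C$. Here $N$ only maps isomorphically onto $\mathcal N_3/\mathcal N_4$. So $(\Pi_3/\Pi_4)_N=(\Pi_3/\Pi_4)_{\mathcal N_3}=0$, but $(\Pi_4)_N$ lives on the characters of the abelian group $\mathcal N_3$ that are trivial on $N$ and nontrivial on $\mathcal N_4$, and need not vanish.
\item For $\SL_5\times\SL_2$, your decomposition of $V_{27}$ has dimension $25$. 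The correct one is $\wedge^2V_5\oplus V_5\otimes V_2\oplus V_5^{\ast}\oplus V_2$ with eigenvalues $-2,1,4,-5$. This gives a six-step filtration with $\mathfrak e_3\cong N/N^{\rder}$, $\mathfrak e_6\cong\mathrm{Lie}(N^{\rder})$, and charged pieces $\mathfrak e_4=V_3\otimes V_5^{\ast}$ and $\mathfrak e_5=V_3^{\ast}\otimes V_2$ in between. Only $\Pi_3/\Pi_4$ and $\Pi_6$ die under $(\cdot)_N$. The pieces $\Pi_4/\Pi_5$ and $\Pi_5/\Pi_6$ must go through the orbit argument, using characters of the abelian quotient $\mathcal N_3/\mathcal N_{i+1}$ that are trivial on the image of $N$.
\end{itemize}

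Second, and more seriously, you misidentify where the hypothesis on $M^{\rder}$ enters. The check you single out is automatic: every positive-degree piece outside $\mathrm{Lie}(N)$ is $V_3^{(\ast)}$-charged. Indeed, the $\SL_3$-invariants of $\mathfrak e$ are exactly $\mathfrak g$, so the uncharged part of $\bigoplus_{i>0}\mathfrak e_i$ is always $\mathrm{Lie}(N)$. The real issue is the top piece. The claim that $\Pi_{\mathcal N}$ is generated by Iwahori-fixed vectors, and so has no supercuspidal $\SL_3$-constituents, needs $\SL_3$ to be a direct factor of $\mathcal M$, i.e.\ $\mathcal M^{\rder}=\SL_3\times M^{\rder}$. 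This holds iff $h$ has no zero eigenvalue on $V_{27}$; otherwise $V_3\otimes(V_{27})_0\subset\mathfrak e_0$.

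You assert $\mathcal M^{\rder}=\SL_3\times M^{\rder}$ without checking it. It holds for the $D_5$ and $A_4\times A_1$ Levis by the eigenvalue lists above, and it is exactly what fails for the excluded Levis:
\begin{itemize}
\item For $\SL_6$, the zero eigenspace is the $15$-dimensional $\wedge^2$-piece, and $\mathcal M$ is of type $E_7$ (Section~\ref{S:heisenberg}).
\item For $\SL_3\times\SL_3\times\SL_2$, the zero eigenspace is $9$-dimensional and $\mathcal M^{\rder}\cong E_6\times\SL_2$.
\end{itemize}
In both cases $\Pi_{\mathcal N}$ contains the minimal representation of $\mathcal M$, in which $\SL_3$ is one member of a dual pair. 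There supercuspidal $\sigma$ genuinely occur, consistent with the $A_2\times A_2$ cuspidal support in Proposition~\ref{P:tempered}. So your closing explanation of the excluded cases (uncharged pieces above $N$ with reductive stabilizers) is not what happens. The verification that $(V_{27})_0=0$ is the actual content of the hypothesis, and it is missing from your argument.
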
 
\begin{proof} We repeat the proof of Proposition \ref{P:support_1}. Again, we need 
  a maximal parabolic subgroup $\mathcal P=\mathcal M\mathcal N$ in the ambient group that intersects $E_6$ in $P$. In this case,
\[ 
\mathfrak e = \mathfrak{sl}_3 \oplus \mathfrak{e}_6 \oplus V_3 \otimes V_{27}  \oplus V_3^{\ast} \otimes  V_{27}^{\ast}.
\] 
Assume first that $M^{\rder} = \Spin_{10}$.  Let $V_{16}$ and $V_{16}^{\ast}$ be two 16-dimensional  half-spin modules. 
As the notation indicates, they are dual to each other. We can pick the labeling so that 
$V_{16}\cong N$.  Let $V_{10}$ be the 
standard representation of $\Spin_{10}$. 
We pick a semisimple $h\in \mathfrak e_6$ such that $M$ is the centralizer of $h$, 
and the Lie algebra of $N$ is the eigenspace, for the adjoint action of $h$, with the eigenvalue $3$.  In this case $\mathcal M^{\rder} \cong \SL_3\times \Spin_{10}$. 
Under the restriction to $\Spin_{10}$, 
\[ 
V_{27} = V_{10}\oplus V_{16}  \oplus \mathbb C 
\] 
where $V_{10}$ is the standard representation of $\Spin_{10}$, 
and $h$ acts on the three summands by eigenvalues  $-2, 1$, and $4$, respectively. Thus $\mathcal N$ is a $4$-step unipotent subgroup with a filtration $\mathcal N_i$ such that 
 $\mathcal N_i/\mathcal N_{i+1} \cong \mathfrak e_i$ for   $1\leq i\leq 4$. 
  Each of these subquotients is naturally an $\mathcal M$-module. As ($\SL_3 \times \Spin_{10}$)-modules, they are:
\[ 
\mathfrak e_1= V_3 \otimes V_{16},  \, 
\mathfrak e_2= V_3^{\ast} \otimes  V_{10}^{\ast},  \, 
\mathfrak e_3 = \mathbb C \otimes V_{16}^{\ast}, \, 
\mathfrak e_4 = V_3 \otimes  \mathbb C. 
\] 
Observe that $\mathcal N_3/\mathcal N_{4} \cong \mathfrak e_3\cong N$. The proof here proceeds in the same way as in Proposition \ref{P:support_1}. 
We leave the details to the reader. 

\smallskip 
Finally, assume that $M^{\rder} = \SL_5\times \SL_2$. In this case $N$ is not abelian. It is a two-step unipotent group such that, as 
$\SL_5\times \SL_2$-modules, 
\[ 
N/N^{\rder} \cong \wedge^2 V_5^{\ast} \otimes V_2 \text { and }  N^{\rder} \cong V_5 \otimes \mathbb C. 
\] 
We let $h$ be a multiple of the fundamental co-weight so that it acts on the Lie algebra of $N^{\rder}$ with eigenvalue $6$.  In particular, the filtration of the unipotent radical $\mathcal N$ will be of length $6$, the most Lie algebras allow! 
Under the restriction to $\SL_5\times \SL_2$, 
\[ 
V_{27} = \wedge^2V_{5} \otimes \mathbb C \oplus V_5\otimes V_2 \oplus V_5^{\ast}\otimes \mathbb C   \oplus \mathbb C \otimes V_2
\] 
where $h$ acts on the four summands by eigenvalues $-2, 1, 4$ and $-5$,  respectively. 
Thus $\mathcal N$ is a $6$-step unipotent subgroup with a filtration $\mathcal N_i$ such that 
 $\mathcal N_i/\mathcal N_{i+1} \cong \mathfrak e_i$ for  $1\leq i \leq 6$. 
  Each of these subquotients is naturally an $\mathcal M$-module. As ($\SL_3 \times \SL_5\times \SL_2$)-modules:  
  \[ 
\mathfrak e_1= V_3 \otimes V_5\otimes V_2,  \, 
\mathfrak e_2= V_3^{\ast} \otimes \wedge^2 V_5 \otimes \mathbb C ,  \, 
\mathfrak e_3 = \mathbb C \otimes \wedge^2 V_5^{\ast}\otimes V_2 
\] 
\[ 
\mathfrak e_4 = V_3 \otimes  V_5^{\ast}  \otimes  \mathbb C, \, 
\mathfrak e_5= V_3^{\ast} \otimes \mathbb C  \otimes V_2  ,  \, 
\mathfrak e_6 = \mathbb C \otimes  V_5 \otimes \mathbb C 
\] 
Observe that $\mathcal N_3/\mathcal N_{4} \cong \mathfrak e_3\cong N/N^{\rder}$ and $\mathcal N_6 = N^{\rder}$. 
The proof here proceeds in the same way as before, so we leave the details to the reader. 
\end{proof}  

\section{$E_7$ Siegel parabolic} 
\label{S:Siegel}
 Let $\mathcal G$ be the split simply connected $p$-adic group of type $E_7$. Let $\mathcal P = \mathcal{MN}$ be the Siegel maximal parabolic subgroup of $\mathcal G$: the Levi $\mathcal M$ is of type $E_6$, and the unipotent radical $\mathcal N$ is a $27$-dimensional representation, isomorphic to the exceptional Jordan algebra $J$.
 
 The dual pair is $\SL_3 \times \SL_6$, which we embed into $\mathcal G$ so that $\SL_3$ is contained in $\mathcal M$. Since $\mathcal{G}$ is simply connected, $\Hom(\mathcal M, \GL_1) \cong \mathbb Z$ is generated by a fundamental weight $\varpi_{\mathcal M}$.  We identify the center of  $\mathcal{M}$ with $\GL_1$ so that a central element acts on $\mathcal N$ by $z^2$ and $\varpi_{\mathcal M}(z)=z^3$. 

The parabolic $\mathcal{P}$ intersects $\SL_6$ in $P=MN$, the maximal parabolic with derived Levi $\SL_3\times \SL_3$. Without loss of generality, we assume that $N$ is given by upper-triangular matrices. In this section, we compute the Jacquet module of the minimal representation of $\mathcal G$ with respect to $N$. Our starting point is the following (Theorem 1.1 \cite{MS97}):

\begin{prop}  \label{prop:msE6} Let $(\Pi, V)$ be the minimal representation of $\mathcal G$. Let $\Omega$ be the $\mathcal M$-orbit  of the highest weight vector in $\mathcal{N}$.  Let $\bar{\mathcal P}$ be the parabolic opposite to  $\mathcal P$, so that their intersection is $\mathcal M$. 
We have the following exact sequence of  $\bar{\mathcal P}$-modules, 
\[ 
0\rightarrow C_c^{\infty}(\Omega)\rightarrow V \rightarrow  V_{\bar{\mathcal N}}\rightarrow 0.
\]
The action of $\bar{\mathcal P}$ on  $f\in C_c^{\infty}(\Omega)$ is given by: 
\begin{itemize} 
\item For every  $\bar n\in \bar{\mathcal N}$
\[ 
\Pi(n)f(x)=\psi(\langle  x, \bar n\rangle )f(x).
\]  
\item For every  $m\in \mathcal M$ 
\[
\Pi(m)f(x)= |\varpi_{\mathcal M}(m)|^{-4} f(m^{-1} x m).
\]  
\end{itemize} 
Here $\psi$ is a non-trivial additive character, $\langle\cdot,\cdot\rangle$ is a pairing between $\mathcal N$ and $\bar{\mathcal N}$ given by the Killing form. 
Moreover, $V_{\bar{\mathcal N}} \cong V(\mathcal M)\otimes |\varpi_{\mathcal M}|^{-2} \oplus  |\varpi_{\mathcal M}|^{-4} $ where $V(\mathcal M)$ is the minimal representation of $\mathcal M$ with the center acting trivially. 
\end{prop}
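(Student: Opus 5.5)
This statement is quoted from \cite[Theorem 1.1]{MS97}. I would reconstruct its proof from the structure of the minimal representation restricted to the Siegel parabolic, rather than re-derive the minimal representation itself.

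\textbf{Step 1: the kernel of the Jacquet functor.} I would begin with the exact sequence
\[
0\rightarrow V(\bar{\mathcal N})\rightarrow V\rightarrow V_{\bar{\mathcal N}}\rightarrow 0,
\]
where $V(\bar{\mathcal N})$ is the kernel of the Jacquet functor. Since $\bar{\mathcal N}$ is abelian and identified with the dual of $\mathcal N$ via the Killing form, Bernstein--Zelevinsky \cite[Proposition 1.14]{BZ} realizes $V(\bar{\mathcal N})$ as compactly supported sections of a sheaf on $\mathcal N\smallsetminus\{0\}$. The stalk at $x$ is the twisted Jacquet module $V_{\bar{\mathcal N},\psi_x}$, and $\bar{\mathcal N}$ acts on sections by multiplication by $\psi(\langle x,\bar n\rangle)$.

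\textbf{Step 2: support and stalks.} The wavefront set of $\Pi$ is the minimal orbit. I would use this, together with the known computation of twisted Jacquet modules of the minimal representation along a Heisenberg-type or abelian radical (as in \cite{GrS98}), to show two things. First, the stalk vanishes unless $x$ lies in the orbit $\Omega$ of highest weight vectors. Second, for $x\in\Omega$ the stalk is one-dimensional. For $x$ in a larger $\mathcal M$-orbit (rank $2$ or $3$ elements of $J$), a nonzero stalk would force the wavefront set to contain a larger orbit. Granting this, $V(\bar{\mathcal N})\cong C_c^\infty(\Omega,\mathcal L)$ for a line bundle $\mathcal L$.

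\textbf{Step 3: trivializing the line bundle.} The stabilizer of $x\in\Omega$ in $\mathcal M$ is a parabolic-type subgroup whose Levi is essentially $\Spin_{10}$ times a torus, with unipotent radical lying in the derived group. It acts on the stalk by a character. Since the derived stabilizer has no nontrivial characters, this character factors through a power of $\varpi_{\mathcal M}$. The line bundle is then trivialized up to the twist $|\varpi_{\mathcal M}(m)|^{-4}$.

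\textbf{Step 4: pinning down the exponent and the Jacquet module.} I expect this to be the main obstacle: matching the exponent $-4$ and the exponents in $V_{\bar{\mathcal N}}$ exactly. I would handle it by computing central exponents. First, $\Pi$ is a quotient of a degenerate principal series induced from $\bar{\mathcal P}$, which gives the possible exponents of $V_{\bar{\mathcal N}}$ via the geometric lemma. The finite-length quotient is an $E_6$-module whose constituents must themselves have minimal wavefront, hence are $V(\mathcal M)$ or trivial. Then the exponents $-2,-4$ are fixed by comparing with the minimal representation's known Jacquet module along the Siegel parabolic \cite{SW15}, and the exponent $-4$ on $C_c^\infty(\Omega)$ is fixed by unitarity or Plancherel considerations (square-integrability of the $\Omega$-model). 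Finally, the extension is split, since the two summands have distinct central characters.
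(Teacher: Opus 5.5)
\textbf{The citation matches the paper; the reconstruction does not prove the statement.}

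The paper gives no argument of its own here. The proposition is quoted from \cite[Theorem 1.1]{MS97} and used as a black box, so your opening sentence is exactly what the paper does. Your reconstruction, however, does not establish the quantitative content of the statement.

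\textbf{Step 3 leaves the character undetermined.} The stabilizer of $x\in\Omega$ in $\mathcal M$ is $S_x=(T'\cdot\Spin_{10})\ltimes U_{16}$, with $T'$ a one-dimensional split torus. One checks that $\varpi_{\mathcal M}|_{S_x}$ generates $X^*(S_x)$, so the line bundle does trivialize $\mathcal M$-equivariantly. What this yields, though, is $\Pi(m)f(x)=c(m)f(m^{-1}xm)$ with $c=\chi\circ\varpi_{\mathcal M}$ for an arbitrary smooth character $\chi$ of $F^\times$. Nothing makes $c$ a real power of $|\varpi_{\mathcal M}|$.

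\textbf{Unitarity fixes only $|\chi|$.} Your idea does work for the absolute value. $\bar{\mathcal N}$-invariance forces the invariant form on $C_c^\infty(\Omega)$ to be $\int_\Omega f_1\bar f_2\,d\nu$ for a positive measure $\nu$, and $\mathcal M$-invariance makes $\nu$ semi-invariant. That measure is homogeneous of degree $12$ under dilations, because the modulus character of the $D_5$-parabolic of $E_6$ stabilizing the line through $x$ is $|\varpi_1|^{12}$. Since the center acts on $\mathcal N$ by $z^2$ with $\varpi_{\mathcal M}(z)=z^3$, you get exactly $|c|=|\varpi_{\mathcal M}|^{-4}$. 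But unitarity cannot detect a unitary twist $\chi_0\circ\varpi_{\mathcal M}$. So the formula with $|\varpi_{\mathcal M}|^{-4}$ on the nose is asserted, not derived.

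\textbf{The Jacquet module is imported, not reconstructed.} Wavefront considerations plus the geometric lemma only show that the constituents of $V_{\bar{\mathcal N}}$ are twists of $V(\mathcal M)$ and of the trivial representation. The exponents $-2,-4$ and the multiplicities are taken from \cite{SW15}, which amounts to citing the conclusion. Once they are known, splitting via the distinct central characters $|z|^{-6}$ and $|z|^{-12}$ is fine.

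\textbf{Step 2 also needs more than the wavefront set.} One-dimensionality of the stalks requires a multiplicity statement for the degenerate Whittaker model attached to the Siegel grading, not merely knowledge of the wavefront set.

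\textbf{What is missing.} All three places need an actual computation of the stalk $V_{\bar{\mathcal N},\psi_x}$ as an $S_x$-module. One route is through the Heisenberg parabolic, where the twisted coinvariants for the center form an explicit Weil-type representation. For the character alone, another route is an argument identifying the summand $|\varpi_{\mathcal M}|^{-4}$ of $V_{\bar{\mathcal N}}$ with the constant germ at $0$ of functions on $\Omega$, on which $\mathcal M$ acts through $c$. That input is exactly what \cite{MS97} supplies. Without it, your outline is a consistency check of the statement rather than a proof.
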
 

The above proposition implies that
\[ 
0\rightarrow C_c^{\infty}(\Omega_0)\rightarrow V_{\bar{N}}\rightarrow  V_{\bar{\mathcal N}}\rightarrow 0,
\]
where $\Omega_0$ is the intersection of $\Omega$ with the orthogonal complement of $\bar N$ in $\mathcal N$. We now describe these objects in more detail.

The Levi $M$ is 
$(\GL_3 \times \GL_3)^{\det}$, the group consisting of pairs $(g_2,g_3)$ such that $\det g_2 = \det g_3^{-1}$. We have 
\[ 
\varpi_M(g_2,g_3)= \det g_2 = \det g_3^{-1}.
\]
Note that $\varpi_{\mathcal M}$ restricted to $M$ gives $\varpi_M$.  
Under the action of $\SL_3 \times(\GL_3 \times \GL_3)^{\det}$, $J=\mathcal N$ decomposes as
\[
\Hom(V_2,V_1) \oplus \Hom(V_3,V_2) \oplus \Hom(V_1,V_3) 
\]
where $V_1,V_2,V_3$ are the standard representations for the three copies of $\SL_3$. Let $(A,B,C)$ denote the triple of homomorphisms living in the above space. The elements perpendicular to $\bar N$ in this setting are those with $B=0$. The orbit $\Omega_0$ consists of pairs of homomorphisms
\[
(A,C) \in\Hom(V_2,V_1)  \oplus \Hom(V_1,V_3)
\]
where rank of $A$ and $C$ is $1$, and $CA = 0$. An element $(h,g_2,g_3) \in
\SL_3 \times (\GL_3 \times \GL_3)^{\det}
$
acts on these pairs by
\[
(h, g_2, g_3)\cdot (A,C) = (\det(g_2)\cdot hAg_2^{-1}, \det(g_3)^{-1}\cdot g_3Ch^{-1}).
\]
Note the det twists of the geometric action; these ensure the center acts correctly. We have three orbits:
\begin{enumerate}[(i)]
\item $C=0$
\item $A=0$
\item $A,C\neq 0$.
\end{enumerate}
Fixing the standard basis for $V_1,V_2,V_3$, we view these operators as matrices. We write down the corresponding representations.

\textbf{Orbit (i)} is the orbit of $(A, 0)$ with $A = \begin{pmatrix}
0 & 0 & 1\\
0 & 0 & 0\\
0 & 0 & 0
\end{pmatrix} = \begin{pmatrix}
1\\
0\\
0
\end{pmatrix} \begin{pmatrix}
0 & 0 &  1
\end{pmatrix}$. The line through this element is stabilized by
the parabolic in $\SL_3 \times (\GL_3 \times \GL_3)^{\det}$ whose Levi consists of elements
\[
\left(
\begin{array}{c|cc}
	\det X^{-1} & 0 & 0 \\ \hline
	0 & \multicolumn{2}{c}{\multirow{2}{*}{$X$}} \\
	0 & \multicolumn{2}{c}{} 
\end{array}
\right), \quad \left(
\begin{array}{cc|c}
	\multicolumn{2}{c|}{\multirow{2}{*}{$Y$}} & 0 \\
	\multicolumn{2}{c|}{} & 0 \\ \hline
	0 & 0 & z
\end{array}
\right), \quad 
\begin{pmatrix}
\phantom{0} & \phantom{0} & \phantom{0}\\
\phantom{0} & W & \phantom{0}\\
\phantom{0} & \phantom{0} & \phantom{0}
\end{pmatrix} \quad \in \SL_3 \times \GL_3\times \GL_3
\]
with $z\cdot \det Y \cdot \det W=1$. This group acts on the line through $(A,0)$ by $\det X^{-1}\det Y$. The point is thus fixed by the subgroup consisting of triples as above with $\det Y=\det X$. In total, we get the representation
\[
\Ind_{P_2 \times P_1 \times \SL_3}^{\SL_3 \times M_1}(\delta_1\cdot C_c(F^\times))
\]
where $\delta_1$ is some character of the stabilizer.

\textbf{Orbit (ii)} is the orbit of $(0, C)$ with $C = \begin{pmatrix}
	0 & 0 & 1\\
	0 & 0 & 0\\
	0 & 0 & 0
\end{pmatrix} = \begin{pmatrix}
	1\\
	0\\
	0
\end{pmatrix} \begin{pmatrix}
	0 & 0 &  1
\end{pmatrix}$ as before. The line through this element is stabilized by
the parabolic in $\SL_3 \times (\GL_3 \times \GL_3)^{\det}$ whose Levi consists of elements
\[
\left(
\begin{array}{cc|c}
	\multicolumn{2}{c|}{\multirow{2}{*}{$X$}} & 0 \\
	\multicolumn{2}{c|}{} & 0 \\ \hline
	0 & 0 & \det X^{-1}
\end{array}
\right), \quad \begin{pmatrix}
	\phantom{0} & \phantom{0} & \phantom{0}\\
	\phantom{0} & Y & \phantom{0}\\
	\phantom{0} & \phantom{0} & \phantom{0}
\end{pmatrix}, \quad
\left(
\begin{array}{c|cc}
	z & 0 & 0 \\ \hline
	0 & \multicolumn{2}{c}{\multirow{2}{*}{$W$}} \\
	0 & \multicolumn{2}{c}{} 
\end{array}
\right) \quad \in \SL_3 \times \GL_3\times \GL_3
\]
with $z\cdot \det W \cdot \det Y=1$. This group acts on the line through $(0,C)$ by $\det X \cdot \det W^{-1}$. We get the representation 
\[
\Ind_{P_1 \times \SL_3 \times P_2}^{\SL_3 \times M_1}(\delta_2\cdot C_c(F^\times)),
\]
where $\delta_2$ is a character of the stabilizer.

\textbf{Orbit (iii)} is the orbit of $(A, C)$ with $A=C = \begin{pmatrix}
	0 & 0 & 1\\
	0 & 0 & 0\\
	0 & 0 & 0
\end{pmatrix} = \begin{pmatrix}
	1\\
	0\\
	0
\end{pmatrix} \begin{pmatrix}
	0 & 0 &  1
\end{pmatrix}$. The line through this element is stabilized by
\[
B \times P_1 \times P_2
\]
where $B$ is the Borel in $\SL_3$. The Levi of this parabolic consists of elements
\[
\begin{pmatrix}
t_1& \phantom{0} & \phantom{0}\\
	\phantom{0} & t_2 & \phantom{0}\\
	\phantom{0} & \phantom{0} & t_3
	\end{pmatrix},\quad 
\left(
\begin{array}{cc|c}
	\multicolumn{2}{c|}{\multirow{2}{*}{$X$}} & 0 \\
	\multicolumn{2}{c|}{} & 0 \\ \hline
	0 & 0 & y
\end{array}
\right),\quad 
	\left(
	\begin{array}{c|cc}
		z & 0 & 0 \\ \hline
		0 & \multicolumn{2}{c}{\multirow{2}{*}{$W$}} \\
		0 & \multicolumn{2}{c}{} 
	\end{array}
	\right) \quad \in \SL_3 \times \GL_3\times \GL_3
\]
with $t_1t_2t_3=1$ and $\det X\cdot  y = z^{-1}\cdot \det W^{-1}$. The point $(A,C)$ is fixed by the subgroup consisting of triples as above with $\det X=t_1^{-1}$ and $\det W=t_3^{-1}$. Thus we get a character $\delta_3$ of this stabilizer, and a regular representation $C_c(F^\times\times F^\times)$ where the above group acts on $F^\times\times F^\times$ by $t_1\cdot \det X$ on the first copy, and $\det W^{-1}\cdot t_3^{-1}$ on the second. In total, we get the representation
\[
\Ind_{B \times P_1 \times P_2}^{\SL_3 \times M_1}(\delta_3\cdot C_c(F^\times \times F^\times)).
\]
It remains to pin down the characters $\delta_1, \delta_2,\delta_3$. To determine $\delta_3$, note that we already know that the generic principal series $\chi_1 \times 1 \times \chi_3$ of $\SL_3$ lifts to $(\chi_1 \circ \det) \times 1 \times (\chi_3 \circ \det)$ of $\SL_6$ induced from the $(\GL_2 \times \GL_2\times \GL_2)^{\det}$ Levi. This forces $\delta_3$ to be the character
\[\left(
\left(
\begin{array}{cc|c}
	\multicolumn{2}{c|}{\multirow{2}{*}{$X$}} & 0 \\
	\multicolumn{2}{c|}{} & 0 \\ \hline
	0 & 0 & y
\end{array}
\right),\quad 
	\left(
	\begin{array}{c|cc}
		z & 0 & 0 \\ \hline
		0 & \multicolumn{2}{c}{\multirow{2}{*}{$W$}} \\
		0 & \multicolumn{2}{c}{} 
	\end{array}
	\right)\right) \mapsto |y|^{-1/2}\cdot |z|^{1/2}.
\]
The pieces (i) and (ii) are relevant to lifts of degenerate principal series of $\SL_3$ (induced from a character of $\GL_2$). Recall that $\Ind_{\GL_2}^{\SL_3} \chi$ lifts to the representation $(\chi \circ \det_{\GL_3}) \times \chi \times 1_{\GL_2}$ of $\SL_6$, by Proposition \ref{P:theta_nc}. The block-size $(3,1,2)$ Jacquet module of the latter representation contains precisely one irreducible constituent where $\SL_2$ and $\SL_3$ both act trivially: $(\chi \circ \det_{\GL_3}) \otimes  \chi \otimes 1_{\GL_2} $. But this is the same as $1_{\GL_3} \otimes 1 \otimes (\chi^{-1} \circ \det_{\GL_2}) $. So applying the relevant Jacquet functor to $$\Pi \mapsto  \Ind_{\GL_2}^{\SL_3} \chi \otimes (\chi \circ \det{}_{\GL_3}) \times \chi \times 1_{\GL_2},$$
we see that $(\chi \circ \det_{\GL_2}) \otimes [1_{\GL_3} \otimes 1 \otimes (\chi^{-1} \circ \det_{\GL_2}) ]$ should appear. This is precisely what we get from piece (i), so no twisting is needed: $\delta_1 =1$. We get $\delta_2 = 1$ analogously.

To summarize, the Jacquet functor $r_{\overline{P}}(\Pi)$ has a filtration of length $3$ where the top level is the Jacquet module with respect to $\bar{\cal N}$, the middle is a direct sum of pieces described in cases (i) and (ii), and the bottom level is described in (iii).


The following proposition handles the big theta lift from $\SL_6$ for the hermitian representations appearing in the degenerate series induced from the Levi $A_1\times A_3$.   
These are the trivial representation and representations induced from a 
character $\chi\circ\varpi_2$ where $\chi$ is unitary character. 
Let $\pi_0$ be this representation for $\chi=1$.  

\begin{prop} \label{P:theta_siegel}
For hermitian representations of $G=\SL_6$ in the degenerate $A_1\times A_3$ series, we have: 
\begin{itemize}
    \item The big theta lift of the trivial representation is the irreducible representation of $\SL_3$ with the infinitesimal character $(2,0,-2)$. 
    \item The big theta lift of the representation induced from a unitary character $\chi\neq 1$ is an irreducible representation of $\SL_3$ induced from $|\cdot| \times \chi \times |\cdot|^{-1}$. 
    \item For $\pi_0$ (taking $\chi=1$ in the above bullet), the big theta lift $\Theta(\pi_0)$ has the trivial representation as a quotient, and $\Theta(\pi_0)_{U,\psi}$ is at most one-dimensional. 
\end{itemize}
\end{prop}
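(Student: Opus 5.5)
We compute $\Theta(\pi)$ for $\pi$ in the degenerate $A_1\times A_3$ series of $G=\SL_6$ through Jacquet modules of $\Theta(\pi)$ along $\bar P\subset\SL_3$. The input is the Siegel-parabolic filtration of $r_{\bar P}(\Pi)$ obtained above, together with its sister for the maximal parabolics of $\SL_3$ (Theorem~\ref{T:three_step}). The Whittaker part is handled by the see-saw identity $\Theta(\pi)_{U,\psi}\cong \Hom_G(C_c(G/H),\pi)$ of Section~\ref{S:periods}, where $H=\Sp_6$.

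\textbf{Step 1: the Whittaker part.} We first bound $\Theta(\pi)_{U,\psi}$ by $\dim\Hom_{\Sp_6}(\pi,\mathbb C)$. For $\pi$ induced from $\chi\circ\varpi_2$ on the $A_1\times A_3$ parabolic $Q'$, we apply Mackey theory to the restriction of $\Ind_{Q'}^{\SL_6}$ to $\Sp_6$. The relevant space is the set of $2$-planes in $F^6$, and $\Sp_6$ has two orbits on it: the isotropic planes (closed) and the non-degenerate planes (open). The open orbit has stabilizer $\Sp_2\times\Sp_4$, and it contributes at most a one-dimensional space of invariant functionals, since $\chi\circ\varpi_2$ must be trivial on that stabilizer. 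For the closed orbit we compare modular characters. For unitary $\chi\ne 1$ the closed orbit contributes nothing, so the dimension is exactly one. For $\chi=1$ we still get the bound $\le 1$ needed for the third bullet. The trivial representation of $\SL_6$ is not $\Sp_6$-distinguished in the generic sense ($C_c(G/H)\to 1$ does exist), so this step needs care; we will instead conclude there by Jacquet modules.

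\textbf{Step 2: the Jacquet modules of $\Theta(\pi)$.} Using $r_P(\Theta(\pi))=(r_P(\Pi)\otimes\pi^\vee)_G$ and Frobenius reciprocity piece by piece, we compute the possible $\GL_2$-quotients. For $\pi=1_G$ only the top piece contributes, namely $\tau\otimes 1_G$ with $\tau=|\cdot|^2\times|\cdot|^4$ normalized, where $d=2$. This gives $r_P(\Theta(1))$ of finite length, with exponents forcing infinitesimal character $(2,0,-2)$. By Proposition~\ref{P:lift_min}, $\Theta(1)$ has the irreducible principal series with infinitesimal character $(2,0,-2)$ as a quotient. We then show that its Jacquet module already exhausts $r_P(\Theta(1))$ and $r_{\bar P}(\Theta(1))$. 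Combined with the vanishing of the Whittaker part, which holds because the open orbit gives no functional on the trivial representation beyond the one the principal series absorbs, we conclude that $\Theta(1)$ equals this principal series.

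For $\pi$ induced from $\chi\circ\varpi_2$, the middle pieces (i) and (ii) of the Siegel filtration, with $\delta_1=\delta_2=1$, show that the only exponent appearing is that of $|\cdot|\times\chi\times|\cdot|^{-1}$. For $\chi\neq1$ this induced representation is irreducible. Counting multiplicities of exponents shows that $\Theta(\pi)$ equals it, and the one-dimensional Whittaker space from Step 1 is consistent with this.

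\textbf{Step 3: the case $\chi=1$.} Here $|\cdot|\times 1\times|\cdot|^{-1}$ is reducible and has the trivial representation as a quotient. The exponent argument from Step 2 gives a nonzero map from $\Theta(\pi_0)$ onto a quotient of this principal series. Since $\pi_0$ is a quotient of the induced representation and the top Jacquet piece contributes $1_{\SL_3}$, the trivial representation occurs as a quotient of $\Theta(\pi_0)$. The bound $\dim\Theta(\pi_0)_{U,\psi}\le 1$ comes from Step 1.

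The main obstacle is Step 1 on the closed isotropic orbit, where the modular characters must be checked precisely. The identification of which filtration pieces survive in Step 2 is also delicate, because the twists in pieces (i) and (ii) were fixed only indirectly.
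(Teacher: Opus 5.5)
\textbf{There is a genuine gap in Step~2, and in the trivial-quotient part of Step~3.}

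\emph{First bullet.} For $\pi=1_G$ you assert that only the top piece of the filtration of $r_P(\Pi)$ in Theorem~\ref{T:three_step} contributes to $r_P(\Theta(1_G))=(r_P(\Pi))_G$. This contradicts the statement you are proving. If $\Theta(1_G)$ is the irreducible principal series $I=\Ind_P^{\SL_3}(\tau)$ with $\tau=|\cdot|^2\times|\cdot|^4$, then $r_P(I)$ has three constituents: $|\cdot|^2\times|\cdot|^4$, $|\cdot|^{-2}\times|\cdot|^{2}$ and $|\cdot|^{-2}\times|\cdot|^{-4}$. The last two have central characters $1$ and $|\cdot|^{-6}$, while the top piece only carries $|\cdot|^{5}$ and $|\cdot|^{6}$. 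So the middle and bottom pieces must contribute. The middle piece indeed has non-zero $G$-coinvariants: $L$ acts on $C_c(F^\times)$ by translation through its character on the line of $z$, and $\delta_{\bar Q}^{1/2}$ factors through that character. Hence ``exponents forcing infinitesimal character $(2,0,-2)$'' and ``its Jacquet module already exhausts $r_P(\Theta(1))$'' are unsupported. Your route could only be completed by computing the $G$-coinvariants of all three pieces, matching them with $r_P(I)$, and then removing the cuspidal part of the kernel via $\dim\Hom_U(\Theta(1_G),\psi_a)=\dim\Hom_G(C_c(G/H_a),1)=1=\dim I_{U,\psi_a}$.

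\emph{Second bullet.} For $\chi\neq1$ you invoke the wrong pieces. Orbits (i) and (ii) are induced on the $\SL_3$ side from $P_2$ and $P_1$ through $\det X$, so they only yield degenerate principal series. The representation $|\cdot|\times\chi\times|\cdot|^{-1}$ is not a constituent of any of these, since no two of its exponents differ by $|\cdot|^{\pm1}$. It comes from orbit (iii).

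\emph{Third bullet.} Placing $1_{\SL_3}$ in some top piece only puts it inside an upper bound. It does not give a non-zero map $\Theta(\pi_0)\to 1$.

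\textbf{The missing idea is how the Siegel filtration is brought to bear.} The paper embeds $\pi$ into a representation induced from the Siegel parabolic $\bar P$ of $\SL_6$ itself: $1_G\subset\Ind_{\bar P}^G(s)$ with $s=-3$, $\pi_0\subset\Ind_{\bar P}^G(s)$ with $s=-1$, and similarly for $\chi\neq1$. Then $\Theta(\pi)$ is a quotient of $(\Pi_{\bar N}\otimes|\varpi_M|^{-t})_M$ with $t=-3-s$. The action of the centre of $M$ on the three-step filtration of $\Pi_{\bar N}$ from \S\ref{S:Siegel} isolates a single piece.
\begin{itemize}
\item For $t=0$ and for $\chi\neq1$ it is the bottom piece (orbit (iii)). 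Its coinvariants are the claimed irreducible principal series, so $\Theta(\pi)$ is a quotient of an irreducible module and nothing remains to be excluded.
\item For $t=2$ it is the top piece, the minimal representation of $E_6$. So $\Theta(\pi_0)$ is a quotient of a big theta lift in the three-$\SL_3$ correspondence of \S\ref{S:3SL3}. That lift has one-dimensional Whittaker coinvariants and trivial irreducible quotient.
\end{itemize}

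Your Step~1 is a valid alternative for the bound $\dim\Theta(\pi_0)_{U,\psi}\le 1$, and its open-orbit functional also gives $\Theta(\pi_0)\neq0$. The closed isotropic orbit is not actually an obstacle. On the Levi $\GL_2\times\Sp_2$ of its stabilizer you compare $\chi(\det)|\det|^{3}$ with the modulus character $|\det|^{5}$, and these never match for unitary $\chi$.
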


\begin{proof}
 For the first two bullets, we already know that the listed representations are quotients of the appropriate big theta, hence it suffices to show that nothing else appears.

 First, we prove the Proposition when $\pi$ is trivial or $\pi_0$. 
 We realize $\pi$ as a submodule of $\Ind_{\bar P}^{G}(\tau)$ 
 for an appropriate representation $\tau$ of $M$: the two representations are submodules of the unramified degenerate principal series $\Ind_{\bar P}^G(s)$ for $s=-3, -1$, respectively. Thus $\Theta(\pi)$ is a quotient of 
    \[ 
    (\Pi\otimes \Ind_{\bar P}^{G}(\tau^{\vee}))_G \cong (r_{\bar P}(\Pi)\otimes \tau^{\vee})_M\cong (\Pi_{\bar N}\otimes (\rho_{\bar P}\cdot \tau)^{\vee})_M
    \]
where $\rho_{\bar P}= |\varpi_M|^{-3}$. Hence $\rho_{\bar P}\cdot \tau= |\varpi_M|^t$ where 
$t=-3-s= 0, 2$, respectively.  Now, looking at the action of the center of $M$, the contribution to 
\[ 
(\Pi_{\bar N}\otimes  |\varpi_M|^{-t})_M
\]
comes from the bottom, and the top piece of the filtration of $\Pi_{\bar N}$, respectively, where on $\SL_3$ we have the claimed representations. 
For the case $\pi_0$ we use the correspondence in $E_6$. The claim in the second bullet is entirely analogous to the first one, with the only contribution coming from the bottom of the filtration.
\end{proof}

\section{$E_8$ Heisenberg parabolic} \label{S:heisenberg}
Let $\mathcal G$ be a $p$-adic group of type $E_8$. 
Let $\mathcal P = \mathcal{MN}$ be the Heisenberg maximal parabolic subgroup in $\mathcal G$. Then $\mathcal{M}$ is of type $E_7$, 
and $\mathcal N$ is a Heisenberg group. Let $\mathcal Z$ be its center. 
Let $\varpi_{\mathcal M}$ be the fundamental weight corresponding to $\mathcal M$. This character of $\mathcal M$ naturally appears as the conjugation action of $\mathcal{M}$ on $\mathcal {Z}$. We identify the center of $\mathcal M$ with $\GL_1$ so that $\varpi_{\mathcal M}(z)=z^2$ for any 
$z\in \GL_1$. 

We embed the dual pair $\SL_3 \times E_6$ in $\mathcal G$ so that $\SL_3$ is 
contained in $\mathcal M$. 
It will be convenient to express the various characters that appear in this section using the character lattice of $E_6$. To that end, we let $\varpi_1, \dotsc, \varpi_6$ denote the fundamental weights of $E_6$ (using the Bourbaki numbering). Similarly, we let $\varpi_1'$ and $\varpi_2'$ denote the fundamental weights in $\SL_3$.

The parabolic $\mathcal{P}$ intersects $G=E_6$ in $P=MN$, the maximal parabolic with derived Levi $\SL_6$. It is also the Heisenberg parabolic of $G$. As in the previous section, our goal is to compute the Jacquet module of the minimal representation with respect to $N$. We start from the following (Theorem 6.1 \cite{MS97}):
\begin{prop}  \label{prop:msE8} Let $(\Pi, V)$ be the minimal representation of $E_8$. 
Let $\Omega$ be the $\mathcal M$-orbit  of the highest weight vector in $\mathcal{N}/\mathcal{Z}$.  Let $\bar{\mathcal P}$ be the opposite parabolic to $\mathcal P$, so that their intersection is $\mathcal M$. 
We have the following exact sequence of  $\bar{\mathcal P}$-modules, 
\[ 
0\rightarrow C_c^{\infty}(\Omega)\rightarrow V_{\bar{\mathcal{Z}}}\rightarrow  V_{\bar{\mathcal N}}\rightarrow 0.
\]
The action of $\bar{\mathcal P}$ on  $f\in C_c^{\infty}(\Omega)$ is given by: 
\begin{itemize} 
\item For every  $\bar n\in \bar{\mathcal N}$
\[ 
\Pi(n)f(x)=\psi(\langle  x, \bar n\rangle )f(x).
\]  
\item For every  $m\in \mathcal M$ 
\[
\Pi(m)f(x)= |\varpi_{\mathcal M}(m)|^{-5} f(m^{-1} x m).
\]  
\end{itemize} 
Here $\psi$ is a non-trivial additive character and $\langle\cdot,\cdot\rangle$ is a pairing between $\mathcal N/\mathcal Z$ and $\bar{\mathcal N}/\bar{\mathcal Z}$ induced by the Killing form. 
Moreover, 
\[ 
V_{\bar{\mathcal N}} \cong V(\mathcal M)\otimes |\varpi_{\mathcal M}|^{-3} \oplus  |\varpi_{\mathcal M}|^{-5}
\] 
where $V(\mathcal M)$ is the minimal representation of $\mathcal M$ with the center acting trivially. 
\end{prop}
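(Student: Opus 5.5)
This statement is quoted from \cite[Theorem 6.1]{MS97}, so the plan is to show how it comes out of the general description of the minimal representation restricted to the Heisenberg parabolic, rather than to rebuild the minimal representation. We work with the opposite parabolic $\bar{\mathcal P}=\mathcal M\bar{\mathcal N}$ and filter $V_{\bar{\mathcal Z}}$, a $\bar{\mathcal P}$-module on which $\bar{\mathcal N}$ acts through the abelian quotient $\bar{\mathcal N}/\bar{\mathcal Z}$. The kernel of the projection $V_{\bar{\mathcal Z}}\to V_{\bar{\mathcal N}}$ is a smooth module for this vector group. By Bernstein--Zelevinsky \cite[Proposition 1.14]{BZ}, as in the proof of Proposition \ref{P:support_1}, this kernel is the space of compactly supported locally constant sections of a sheaf on the dual space $\mathcal N/\mathcal Z$. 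Its stalk at $x$ is the twisted coinvariant space $V_{\bar{\mathcal N},\psi_x}$.

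\emph{Support and stalks.} Because the wavefront set of $\Pi$ is the minimal orbit, the sheaf can only be supported on the minimal $\mathcal M$-orbit $\Omega$ of $\mathcal N/\mathcal Z$ together with $0$. Away from $0$ the stalks are one-dimensional: this is the uniqueness of generalized Whittaker models for the minimal representation, as in \cite[Lemma 2.9]{GrS98}. Since $\mathcal M$ acts transitively on $\Omega$, the kernel is $\ind_{\mathrm{Stab}}^{\mathcal M}$ of a character. The unipotent radical of the stabilizer lies in a commutator subgroup and acts trivially, so the kernel is $C_c^\infty(\Omega)$ twisted by a character of $\mathcal M$, and this character must be a power $|\varpi_{\mathcal M}|^{a}$. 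The $\bar{\mathcal N}$-action by $\psi(\langle x,\bar n\rangle)$ is immediate from the construction of the stalks.

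\emph{The $\bar{\mathcal N}$-coinvariants.} The quotient $V_{\bar{\mathcal N}}$ is the Jacquet module of $\Pi$. We compute it by realizing $\Pi$ as the spherical subquotient of the degenerate principal series induced from the Heisenberg parabolic, and taking the relevant Jacquet module. This shows that $V_{\bar{\mathcal N}}$ has exponents $|\varpi_{\mathcal M}|^{-3}\otimes V(\mathcal M)$ and $|\varpi_{\mathcal M}|^{-5}$, the latter corresponding to the spherical vector. It also gives the identification of the first summand with the minimal representation of $E_7$, through the usual inductive structure of minimal representations. The two pieces have distinct central characters, so the module is a direct sum.

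\emph{Main obstacle: the exponent $a$.} I expect the hardest step to be pinning down the exponent $a=-5$ of the twisting character. I would fix it by compatibility: take $\mathcal M$-coinvariants of $C_c^\infty(\Omega)$ near the boundary, equivalently use the asymptotics of functions at $0\in\mathcal N/\mathcal Z$. The boundary exponents of $C_c^\infty(\Omega)$ must match those of $V_{\bar{\mathcal N}}$, as the extension forces. Alternatively, I would compute $a$ from the $\mathcal Z$-central-character analysis in \cite{MS97}, which yields $|\varpi_{\mathcal M}|^{-5}$ and agrees with the trivial-quotient exponent above.
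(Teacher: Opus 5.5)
The paper gives no argument for this statement; it is imported verbatim from Theorem 6.1 of \cite{MS97}. Falling back on that reference is therefore consistent with the paper. Your outline, however, does not stand on its own as a derivation, because its two quantitative inputs are asserted rather than proved.

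For $V_{\bar{\mathcal N}}$, ``take the Jacquet module of the degenerate principal series'' does not isolate the spherical constituent. The module $r_{\bar{\mathcal P}}$ of the induced representation is glued from several pieces indexed by $\mathcal P\backslash\mathcal G/\mathcal P$. You would need to know how these pieces distribute among the constituents, and why the $|\varpi_{\mathcal M}|^{-3}$ piece is exactly $V(\mathcal M)$. Similarly, nothing you say forces the stabilizer of $x\in\Omega$ to act on the stalk by a restriction of $|\varpi_{\mathcal M}|^{a}$, rather than by $|\varpi_{\mathcal M}|^{a}$ times a unitary character.

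The step you flag as the main obstacle does not work as written. An abstract extension of $V_{\bar{\mathcal N}}$ by $C_c^\infty(\Omega)\otimes\chi$ imposes no relation between $\chi$ and the exponents of $V_{\bar{\mathcal N}}$. Coinvariants of compactly supported functions on $\Omega$ carry no information about the boundary point $0$. Even after passing to germs at $0$, central characters give only one equation in two unknowns: for central $z$, a germ homogeneous of degree $d$ is multiplied by $|z|^{2a-d}$.

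Two further inputs make your matching idea work.
\begin{enumerate}
\item[(i)] You need the map $V_{\bar{\mathcal Z}}\to C^\infty(\Omega)$, sending $v$ to its images in the stalks, to be injective. Equivalently, $V_{\bar{\mathcal Z}}^{\bar{\mathcal N}}=0$, i.e.\ no nonzero section is supported at the origin; this requires proof. Granting it, $V_{\bar{\mathcal N}}$ embeds into germs at $0$ modulo germs of functions compactly supported in $\Omega$.
\item[(ii)] Take a $K_{\mathcal M}$-invariant lift $v$ of the spherical summand, on which $[\mathcal M,\mathcal M]=E_7$ acts trivially. Then $gv-v$ lies in the open-orbit piece for every $g\in E_7(F)$, so the germ of $v$ at $0$ is $E_7$-invariant. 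Since $E_7(F)$ acts transitively on $\Omega$, this germ is constant near $0$. The centre acts on such germs through the stabilizer character, namely by $|z|^{2a}$ if the twist is $|\varpi_{\mathcal M}|^a$. Comparing with $|\varpi_{\mathcal M}(z)|^{-5}=|z|^{-10}$ gives $a=-5$, and the same comparison rules out a unitary twist.
\end{enumerate}
Without (i) and (ii), or simply quoting \cite{MS97} as the paper does, your sketch does not determine the exponent $-5$. That exponent is the real content of the statement.
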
 

$\mathcal{P}$ intersects $G=E_6$ in $P=MN$, the maximal parabolic with derived Levi $\SL_6$. It is also the Heisenberg parabolic of $G$. The 
center $Z$ of $N$ is equal to the center $\mathcal Z$ of $\mathcal N$.  
In particular,  the above proposition implies that, for the Jacquet functor 
$V_{\bar N}$,  
\[ 
0\rightarrow C_c^{\infty}(\Omega_0)\rightarrow V_{\bar{N}}\rightarrow  V_{\bar{\mathcal N}}\rightarrow 0.
\]
where $\Omega_0$ is the intersection of $\Omega$ and the orthogonal complement of $\bar N/\bar Z$ in $\mathcal N/\mathcal Z$. 

We now go on to describe these objects in more detail. 
The Levi $M$ is isomorphic to $\GL_1\times_{\mu_2}\SL_6$. 
Let $V$ be the standard 6-dimensional representation of $\SL_6$ consisting of column vectors, and let $V^{\ast}$ be its dual, which we realize on the space of column vectors, but the action is given by precomposing by the transpose-inverse. 
Then $M$ acts on both $V$ and $V^{\ast}$ so that 
$z\in\GL_1$ acts as $z$ on both $V$ and $V^{\ast}$. The action of $M$ on 
each of these two 6-dimensional representations is not faithful, but it is on the direct sum of the two. Hence we can realize $M$ as a subgroup 
of $\GL_6\times\GL_6$, consisting of elements $h=(h_1,h_2)$ such that 
\[ 
h_1^{\top} h_2= \lambda \cdot 1_6 \text{ and } \det(h_1)=\lambda^3 =\det(h_2),  \]
where $1_6$ is the $6\times 6$ identity matrix. The fundamental character  $\varpi_M: M\rightarrow \GL_1$ (also $\varpi_2$ in the Bourbaki notation) is  
\[ \varpi_M(h)= \lambda. 
\] 
The character $\varpi_M$ is equal to the restriction of $\varpi_{\mathcal M}$ to $M$. The $\alpha_2$ co-root is the one-parameter group 
\[ 
\begin{pmatrix}
1& \phantom{0} & \phantom{0} & \phantom{0} & \phantom{0} & \phantom{0} \\
\phantom{0} & 1 & \phantom{0}& \phantom{0} & \phantom{0} & \phantom{0}\\
\phantom{0} & \phantom{0} & 1& \phantom{0} & \phantom{0} & \phantom{0} \\
\phantom{0}& \phantom{0} & \phantom{0} & t & \phantom{0} & \phantom{0} \\
\phantom{0} & \phantom{0} & \phantom{0}& \phantom{0} & t & \phantom{0}\\
\phantom{0} & \phantom{0} & \phantom{0} & \phantom{0} & \phantom{0} & t  
\end{pmatrix},\quad
 \begin{pmatrix}
t& \phantom{0} & \phantom{0} & \phantom{0} & \phantom{0} & \phantom{0} \\
\phantom{0} & t & \phantom{0}& \phantom{0} & \phantom{0} & \phantom{0}\\
\phantom{0} & \phantom{0} & t& \phantom{0} & \phantom{0} & \phantom{0} \\
\phantom{0}& \phantom{0} & \phantom{0} & 1 & \phantom{0} & \phantom{0} \\
\phantom{0} & \phantom{0} & \phantom{0}& \phantom{0} & 1 & \phantom{0}\\
\phantom{0} & \phantom{0} & \phantom{0} & \phantom{0} & \phantom{0} & 1  
\end{pmatrix}.   
\]

We have an action of $\SL_3 \times M$ on $\mathcal{N}/\mathcal{Z} = F+J+J+F$, 
which we now describe. 
We view the first copy of $J$ as
\[
\Hom(V_1,V_2) \oplus \Hom(V_2,V_3) \oplus \Hom(V_3,V_1)
\]
(see Section \ref{S:Siegel}) while the second (dual) copy consists of maps in the other direction:
\[
\Hom(V_1,V_3) \oplus \Hom(V_3,V_2) \oplus \Hom(V_2,V_1). 
\]
Here $V_1, V_2, V_3$ are three-dimensional vector spaces and $V_1$ is the standard representation of $\SL_3$, the centralizer of $G$. Since the centralizer of $\SL_3$ is $G$, we have 
\[ 
N/Z=(\mathcal N/\mathcal Z)^{\SL_3} = F + \Hom(V_2,V_3) + \Hom(V_3,V_2) +F.
\]
Now it is clear that the complement of $N/Z$ in $\mathcal N/\mathcal Z$
consists of pairs of homomorphisms
\[
(A,C) \in \Hom(V_1,V_2) \oplus \Hom(V_3,V_1)
\]
and
\[
(A^*,C^*)  \in \Hom(V_2,V_1) \oplus \Hom(V_1,V_3). 
\]
This quadruple is in $\Omega$ if all maps have rank $1$, and the maps compose to zero: $AC =0$, $C^*A^*=0$, etc. If we fix the standard basis for $V_1, V_2$, and $V_3$, then we can think of these maps as matrices. We form a $6\times 3$ matrix by putting $A$ and $C^*$ together, and a $3\times 6$ matrix by doing the same to $C$ and $A^*$. Thus we are looking at pairs $(X,Y) \in M_{6,3}(F) \times M_{3,6}(F)$ where
\[
XY = 0, \quad YX = 0, \quad \text{rank}(X) = \text{rank}(Y)=1.
\]
The action of $\SL_3\times M$ is now
\[
(g,h) \cdot (X,Y) = (h_1Xg^{-1}, gYh_2^{\top}), \quad\text{for } g \in \SL_3,\ h=(h_1,h_2)\in M.
\]
The situation is similar to the one from the previous section. The $\SL_3\times M$-orbits are
\begin{enumerate}[(i)]
	\item $Y=0$
	\item $X=0$
	\item $X,Y\neq 0$.
\end{enumerate}
Thus the action of $\SL_3\times M$ on $C_c(\Omega_0) \subset V_{\bar N}$ will have a submodule consisting of functions supported on the open orbit (iii) and the quotient will be a direct sum of two spaces, consisting of smooth, compactly supported functions on orbits (i) and (ii).

We write down the corresponding representations. It will be convenient to let $U$ and $u$ denote the highest weight vectors of the standard representation of $\SL_6$ and $\SL_3$, respectively; $V$ and $v$ will denote the highest weight vectors of the costandard representations.

\textbf{Orbit (i)} is the orbit of $(X, 0)$ with $X =Uv^t=\begin{pmatrix}
	1\\
	0\\
	0\\
	0\\
	0\\
	0
\end{pmatrix} \begin{pmatrix}
	0 & 0 &  1
\end{pmatrix}$. The line through this element is stabilized by
\[
P_2 \times Q_{1}.
\]
where $P_2$ is the standard (block upper-triangular) parabolic with block size $(2,1)$ (as before), and $Q_{1}$ the appropriate maximal parabolic of $M$. This group acts on the line through $X$ by $\det \cdot\varpi_1$. Here $\det$ denotes the $\GL_2$ determinant on the Levi of $P_2$, and $\varpi_1$ is the fundamental character of $M$. (Observe that the character $\det$ on the Levi of $P_2$, restricted to the torus, is given by the second fundamental weight $\varpi_2'$.)

In total, we get the representation
\[
\ind_{P_2 \times Q_1}^{\SL_3 \times M}(|\varpi_M|^{-5}\cdot C_c(F^\times))
\]
where the induction is not normalized. A typical (unramified) $\SL_3\times M$ quotient is 
\[ 
\Ind_{P_2}^{\SL_3}(|\det|^{s+3/2}) \otimes 
\Ind_{Q_1}^M ( |\varpi_M|^{-7/2} \cdot |\varpi_1|^s) 
\]
where $s\in \mathbb C$. We note that the center of $M$ acts on the second factor as $|z|^{s-7}$.  The trivial representation of $[M,M]$ appears as a quotient of the second factor if and only if $s=3$.  
In that case, the first factor is the irreducible degenerate principal series that lifts to the minimal representation of $G$.

\textbf{Orbit (ii)} is the orbit of $(0, Y)$ with $Y = uV^t =\begin{pmatrix}
	1\\
	0\\
	0
\end{pmatrix} \begin{pmatrix}
0 & 0 & 0 &	0 & 0 &  1
\end{pmatrix}$. The line through this element is stabilized by
\[
P_1 \times Q_{6}.
\]
where $P_1$ is the standard (block upper-triangular) maximal parabolic with block size $(1,2)$ (as before), and $Q_{6}$ the appropriate maximal parabolic in $M$. This group acts on the line through our point $Y$ by $\varpi_6 \cdot \det$. Observe that the character $\det$ on the Levi of $P_1$, restricted to the torus is given by the first fundamental weight $\varpi_1'$. 

In total, we get the representation
\[
\ind_{P_1 \times Q_6}^{\SL_3 \times M}(|\varpi_M|^{-5}\cdot C_c(F^\times))
\]
where the induction is not normalized.
A typical (unramified) $\SL_3\times M$ quotient is 
\[ 
\Ind_{P_1}^{\SL_3}(|\det|^{s+3/2}) \otimes 
\Ind_{Q_6}^M ( |\varpi_M|^{-7/2} \cdot |\varpi_6|^s) 
\]
where $s\in \mathbb C$. We note that the center of $M$ acts on the second factor as $|z|^{s-7}$.  The trivial representation of $[M,M]$ appears as a quotient of the second factor if and only if $s=3$.
In that case, the first factor is the (other) irreducible degenerate principal series that lifts to the minimal representation of $G$.

\textbf{Orbit (iii)} is the orbit of $(X, Y)$ with $X=Uv^{\top}$ and $Y=uV^t$ as above. The line through this element is stabilized by
\[
B \times Q_{1,6}.
\]
where $B$ is the standard (upper-triangular) Borel in $\SL_3$, and $Q_{1,6}$ is the intersection of the two parabolics from previous cases: $Q_{1,6}=Q_1\cap Q_6$. 
Letting $t_1,t_2,t_3$ be the standard coordinates for the diagonal torus in $\SL_3$, we see that $B\times Q_{1,6}$ acts on 
the line through $(X,Y)$ by $\varpi_2' \cdot \varpi_1 $ and $\varpi_1'\cdot\varpi_6$. In total, we get the representation
\[
\ind_{B \times Q_{1,6}}^{\SL_3 \times M}(|\varpi_M|^{-5}\cdot C_c(F^\times\times F^\times)),
\]
where induction is not normalized. A typical (unramified) $\SL_3\times M$ quotient is 
\[ 
\Ind_{B}^{\SL_3}(|\varpi'_1|^{s+3/2}\cdot |\varpi'_2|^{t+3/2} ) \otimes 
\Ind_{Q_{1,6}}^M ( |\varpi_M|^{-5/2} \cdot |\varpi_1|^t |\varpi_6|^s) 
\]
where $s,t \in \mathbb C$. We note that the center of $M$ acts on the second factor as $|z|^{s+t-5}$.  The trivial representation of $[M,M]$ appears as a quotient of the second factor for $s=t=5/2$. In this case, the first is an irreducible principal series with infinitesimal character 
\[(4,0,-4). \]

Summarizing, we have described the Jacquet functor $r_{\bar P}(\Pi)$ for the Heisenberg parabolic in $G$. It has a filtration of length 3 where the top piece is the Jacquet functor with respect to $\bar{\mathcal N}$, the middle piece is a direct sum of the spaces in the cases (i) and (ii) above, and the bottom piece is the space described in case (iii). 

The following proposition handles the big theta lift from $E_6$ for hermitian representations appearing in the degenerate series induced from the Levi $D_5$, that is, from a character $\chi\circ \varpi_1$. These are the trivial and the minimal representation (which appear as subquotients) and fully induced representations when $\chi$ is unitary. All such induced representations are irreducible, by \cite{W03}. 

\begin{prop} \label{P:heisenberg}
For hermitian representations of $G=E_6$ in the degenerate $D_5$ series, we have: 
\begin{itemize}
    \item The big theta lift of the trivial representation is the irreducible representation of $\SL_3$ with infinitesimal character $(4,0,-4)$. 
    \item The big theta lift of the minimal representation is a direct sum of two degenerate representations of $\SL_3$ with infinitesimal characters $(-3,1,2)$ and $(3,-1,-2)$. 
    \item The big theta lift of the representation induced from $\chi\circ \varpi_1$, where $\chi$ is a unitary character, is an irreducible representation of $\SL_3$ induced from $|\cdot|^2 \times \chi \times |\cdot|^{-2}$. 
\end{itemize}
\end{prop}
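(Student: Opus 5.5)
The plan is to copy the proof of Proposition \ref{P:theta_siegel}, now using the filtration of $r_{\bar P}(\Pi)$ for the Heisenberg parabolic $P=MN$ of $E_6$ just described.

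\textbf{Reduction to a Jacquet module.} Let $\pi$ be one of the hermitian representations in question. We realize $\pi$ as a submodule of a degenerate principal series $\Ind_{\bar P}^G(\tau)$, where $\tau$ is a character of $M$ of the form $\chi\circ\varpi_1$ twisted by a power of $|\varpi_M|$. For the trivial and minimal representations, $\tau$ is unramified and we take the reducibility points where these occur as submodules. For the unitary-induced case, $\Ind(\chi\circ\varpi_1)$ is irreducible by \cite{W03}, so we may use either realization. Frobenius reciprocity then gives a surjection
\[
(r_{\bar P}(\Pi)\otimes\tau^{\vee})_M \twoheadrightarrow \Theta(\pi).
\]
Since $\varpi_1$ is trivial on the centre of $\SL_6$, the centre of $M$ acts on $\tau$ by a single power $|z|^{c}$. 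We then go through the three layers of the filtration and keep only those on which the centre of $M$ can act by $|z|^{c}$:
\begin{itemize}
\item the top layer, $V_{\bar{\mathcal N}}\cong V(\mathcal M)|\varpi_{\mathcal M}|^{-3}\oplus|\varpi_{\mathcal M}|^{-5}$;
\item the middle layers, orbits (i) and (ii), where the centre acts by $|z|^{s-7}$;
\item the bottom layer, orbit (iii), where the centre acts by $|z|^{s+t-5}$.
\end{itemize}

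\textbf{Trivial representation.} Here $\tau$ has $[M,M]$ acting trivially. We expect the central exponent to select only the bottom orbit (iii) with $s=t=5/2$. We also need to rule out the top summand $|\varpi_{\mathcal M}|^{-5}$, whose exponent differs by a nonzero amount, and the orbit (i)/(ii) pieces, which would need $s=3$ and hence a different central exponent. Taking $(\,\cdot\otimes\tau^\vee)_M$ of
\[
\ind_{B\times Q_{1,6}}^{\SL_3\times M}\bigl(|\varpi_M|^{-5}C_c(F^\times\times F^\times)\bigr)
\]
and using Frobenius reciprocity for $Q_{1,6}$ leaves a single principal series of $\SL_3$, with infinitesimal character $(4,0,-4)$. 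This principal series is irreducible, hence equal to $\Theta(1)$.

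\textbf{Minimal representation.} Here the central exponent matches the orbit (i) and (ii) pieces at $s=3$. These produce $\Ind_{P_2}(|\det|^{9/2})$ and $\Ind_{P_1}(|\det|^{9/2})$, the degenerate representations with infinitesimal characters $(3,-1,-2)$ and $(-3,1,2)$. They are irreducible and have different infinitesimal characters, so the extension between them splits. We expect the top layer to contribute $V(\mathcal M)|\varpi_{\mathcal M}|^{-3}$ tensored with the $\Theta$ of $\pi_{\min}$ for $\SL_3\times E_6$ inside $E_7$. This should be finite-dimensional and not match, and we exclude it by the central character. The bottom layer would need a nontrivial $[M,M]$-quotient of $\Ind_{Q_{1,6}}$ equal to the restriction of $\pi_{\min}$ to $M$. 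We exclude this by comparing exponents, $s+t-5$ versus $s-7$.

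\textbf{Unitary-induced case.} This is analogous: only orbit (iii) contributes, and $\Ind_{Q_{1,6}}$ is paired with $\chi\circ\varpi_1$ via the Jacquet module along $Q_{1,6}\subset Q_1$. We take the $\chi\circ\varpi_1$-quotient and read off $\Ind_B(|\cdot|^2\times\chi\times|\cdot|^{-2})$, which is irreducible for unitary $\chi$. Each case gives the lift as a quotient; exhausting the filtration shows nothing else appears.

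\textbf{Main obstacle.} The hard part is the exclusion argument: carefully matching the normalizations of $\rho_{\bar P}$ and the central exponents $-5,-3,s-7,s+t-5$ so that exactly one layer survives. The delicate point is the minimal representation, where the top layer has the same $[M,M]$-type $V(\mathcal M)$. I would first try to separate it by the central character. If the exponents coincide, I would instead use the $E_7$ correspondence for $\SL_3\times\SL_6$, namely Proposition \ref{P:theta_siegel}, restricted to $M$.
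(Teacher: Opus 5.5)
Your treatment of the trivial and minimal representations matches the paper's proof in substance. One correction there: for $\pi_{\min}$ the top layer is excluded simply because $\rho_{\bar P}\tau=|\varpi_M|^{-2}$ gives central exponent $-4$, not $-6$ or $-10$. No appeal to $E_7$ is needed. Also, the dual pair inside $\mathcal M^{\mathrm{der}}=E_7$ is $\SL_3\times\SL_6$, not $\SL_3\times E_6$.

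\textbf{The gap: the third bullet with $\chi=1$.} Here the representation is $\pi_0=\Ind(1\circ\varpi_1)$, and your claim that only orbit (iii) contributes is false. The paper realizes $\pi_0$ as a submodule of $\Ind_{\bar P}^G(s)$ at $s=-5/2$. Then $\rho_{\bar P}\tau=|\varpi_M|^{-3}$, and the centre of $M$ acts by $|z|^{-6}$. This is exactly the central character of the top summand $V(\mathcal M)\otimes|\varpi_{\mathcal M}|^{-3}$.
\begin{itemize}
\item In this realization orbits (i)--(iii) contribute nothing. Their trivial $[M,M]$-quotients occur only with central exponents $-4$ and $0$.
\item So the entire lift is a quotient of $V(\mathcal M)_M$. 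This is the big theta lift of $1_{\SL_6}$ for $\SL_3\times\SL_6\subset E_7$.
\item You need Proposition \ref{P:theta_siegel} (first bullet) to know that this lift is the irreducible representation with infinitesimal character $(2,0,-2)$.
\end{itemize}
You anticipated this fallback, but attached it to the minimal representation, which is not where the exponents coincide.

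\textbf{The set-up for the third bullet is not coherent as written.} $\varpi_1$ is not a character of the Heisenberg Levi $M$, since it pairs nontrivially with $\alpha_1^\vee$, a coroot of $[M,M]=\SL_6$. So for $\chi\neq1$ you cannot realize $\Ind(\chi\circ\varpi_1)$ inside $\Ind_{\bar P}^G$ of a character of $M$. Moreover, the irreducibility from \cite{W03} concerns the $D_5$ parabolic, not $\bar P$.

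What is needed is an embedding $\pi\subset\Ind_{\bar P}^G(\tau_s)$, where $\tau_s$ is a degenerate principal series of $M$ induced from its $\SL_5$-type parabolic $Q_1$. This is obtained by induction in stages from $Q_{1,6}$, using the reducibility at the orbit-(iii) parameter $5/2$. With $\chi=|\cdot|^s$:
\begin{itemize}
\item The centre acts on $\rho_{\bar P}\tau_s$ by $|z|^{s-6}$. This avoids the top layer, which has exponents $-6$ and $-10$, precisely when $|\cdot|^s\neq1$.
\item You must still check that the $Q_1$-induced middle pieces have the wrong central character.
\item At $s=0$ the family $\tau_s$ reduces, with a one-dimensional sub. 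This is exactly why $\pi_0$ has to be treated separately via the character realization above.
\end{itemize}
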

   
\begin{proof}
 We already know that the listed representations are quotients of the big theta, hence it suffices to show that nothing else appears. 
 We shall realize $\pi$ as a submodule of $\Ind_{\bar P}^{G}(\tau)$ 
 for an appropriate representation $\tau$ of $M$. Then
    $\Theta(\pi)$ is a quotient of 
    \[ 
    (\Pi\otimes \Ind_{\bar P}^{G}(\tau^{\vee}))_G \cong (r_{\bar P}(\Pi)\otimes \tau^{\vee})_M\cong (\Pi_{\bar N}\otimes (\rho_{\bar P}\cdot \tau)^{\vee})_M
    \]
where $\rho_{\bar P}= |\varpi_M|^{-11/2}$. First, we look at $\pi$ trivial,  minimal, or $\pi_0$, the induced representation in the third bullet with $\chi=1$.  These three representations are submodules of the unramified degenerate principal series $\Ind_{\bar P}^G(s)$ for $s=-11/2,-7/2,-5/2$, respectively \cite{HS20}.  Hence $\rho_{\bar P}\cdot \tau= |\varpi_M|^t$ where 
$t=-11/2-s= 0, -2,-3 $, respectively.  Now, looking at the action of the center of $M$, the contribution to 
\[ 
(\Pi_{\bar N}\otimes  |\varpi_M|^{-t})_M
\]
comes from the bottom, middle and the top piece of the filtration of $\Pi_{\bar N}$, respectively, where on $\SL_3$ we have the claimed representations. Note that in the last case, we have reduced to the case of correspondence in $E_7$, i.e.\ Proposition \ref{P:theta_siegel}.

We now consider the remaining representations in the third bullet. Again, to compactify notation, assume that $\chi$ is unramified, so that 
$\chi\circ \varpi_1= |\varpi_1|^{s}$ where $s\in \mathbb C$ is purely imaginary. Then $\pi$ is a submodule of $\Ind_{\bar P}(\tau_s)$ where 
$\rho_{\bar P}\tau_s$ is a quotient of 
\[ 
\Ind_{Q_{1,6}}(|\varpi_M|^{-5/2} |\varpi_1|^{s-7/2} |\varpi_6|^{5/2}). 
\]
The parameter $t=5/2$ causes reducibility of the above representation, and by induction in stages, $\tau_s$ itself is induced from the $Q_1$ parabolic. (The family $\tau_s$ reduces for $s=0$ and has a one-dimensional sub, so $\pi_0$ is a submodule of a representation induced from a character of $M$, which is what we just used). 
Recall that the center acts on the above induced representation by $|z|^{s-6}$. As long as $|z|^s$ is non-trivial, computing 
\[ 
(\Pi_{\bar N}\otimes  (\rho_{\bar P}\tau)^{\vee})_M
\]
avoids the top part of $\Pi_{\bar N}$, where the center acts by $|z|^{-6}$
and $|z|^{-10}$. The middle part has representations induced from $Q_1$, but the action of the center is wrong. (The middle part will also not contribute to $\Theta(\pi)_{U,\psi}$). Finally the bottom of the filtration contributes the claimed $\Theta(\pi)$. 
\end{proof}

\begin{rmk}The degenerate series $\Ind_{\bar P}^G(s)$ has one additional reducibility at $s=-1/2$. The irreducible submodule is the lift $\pi_{57}$ of the trivial representation of $\SL_3$; see Proposition \ref{P:triviallifts}.
   \end{rmk}
\section{Ping-pong}\label{S:tabletennis}
In this section we prove the main results of this paper, Theorems \ref{thm_main1} and \ref{thm_main2}. Suppose $\sigma$ and $\pi$ are representations of $\SL_3$ and $G$ such that $\sigma\otimes\pi$ is a quotient of the minimal representation $\Pi$. Both theorems are proved using the method of period ping-pong, which involves two sequences of various $\Hom$-spaces related to $\sigma$ and $\pi$ (see \S\ref{subs:pp}). Assuming that $\pi$ is hermitian, we will connect the two sequences, arriving at the desired conclusions. This is why we begin the section with a brief discussion of hermitian representations. For the second sequence to work, we need to assume that $\sigma$ does not appear in the boundary of the representation $\Pi_H$, which was described in \S\ref{subs:boundary}. The few boundary cases are taken care of in Lemma \ref{L:hermitian_boundary} and \ref{L:boundary}, before we turn our attention to the main case.

We begin with some general observations. Let $G$ be a split reductive group, and fix a Borel subgroup $B=TU$. Recall that an irreducible representation $\sigma$ of $G$ is hermitian if the complex conjugate $\bar \sigma$  is isomorphic to the dual $\sigma^{\vee}$. 
This is the same as $\sigma$ having a non-zero invariant hermitian form. Of course, unitary representations are hermitian.  Assume that $\sigma_U\neq 0$. 
Recall that 
\[ 
(\sigma_U)^{\vee} \cong (\sigma^{\vee})_{\bar U} 
\] 
where  $\bar U$  is the opposite unipotent radical. Let $w_0$ be an element in the normalizer of $T$ representing the longest Weyl group element. 
Then $\bar U=w_0 U w_0^{-1}$. Thus, if $\sigma$ is hermitian, then 
\[ 
(\sigma_U)^{\vee} \cong  w_0(\bar \sigma_{U})w_0^{-1}. 
\] 
Now specialize $B=TU$ to a Borel subgroup of $\SL_3$. Let $\chi$ be a character of $T$. We can write 
\[ 
\chi= \alpha |\cdot |^x \times  \beta |\cdot |^y \times \gamma |\cdot|^z 
\] 
where $x,y,z$ are three real numbers such that $x+y+z=0$, and $\alpha, \beta, \gamma$ are three unitary characters of $F^{\times}$.  They are unique up to a 
simultaneous twisting by a unitary character.  Let 
\[ 
|\chi|= (x,y,z). 
\] 
If the character $\chi$ appears in $\sigma_U$, and $\sigma$ is hermitian,  then $\chi'=w_0(\bar\chi^{-1}) w_0^{-1}$  also appears in $\sigma_U$. Now, 
\[ 
|\chi'|=(-z,-y,-x) 
\] 
is Weyl-conjugate to $|\chi|$ if and only if one of the three $x,y,z$ is 0.  This narrows down the possibilities for hermitian $\sigma$.  


By Proposition \ref{P:Theta_1}, $\Pi_{H_a}$ is isomorphic to a degenerate principal series for $G_2$, whose restriction to $\SL_3$ has a three-step filtration, 
where the bottom piece is a Gelfand--Graev representation while the top two pieces (the boundary) do not depend on $a$.  We have the following:  

\begin{lemma} \label{L:hermitian_boundary} Let $\sigma$ be an irreducible  hermitian representation of $\SL_3$ appearing as a quotient of  one of the two boundary pieces. Then 
\begin{itemize} 
\item If $G=\SL_6$ then $\sigma$ is induced irreducibly from $\chi=|\cdot |^2 \times  1 \times   |\cdot|^{-2}$, or 
 $\chi= |\cdot | \times  \lambda  \times  |\cdot|^{-1}$  where $\lambda\neq 1$ is a unitary character, or it is the trivial representation. 
 \item If $G=E_6$ then $\sigma$ is induced irreducibly from $\chi=|\cdot |^4 \times  1 \times   |\cdot|^{-4}$,  or 
 $\chi= |\cdot |^2 \times  \lambda  \times  |\cdot|^{-2}$  where $\lambda$ is a unitary character.  
 \end{itemize} 
\end{lemma}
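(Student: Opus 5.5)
The plan is to analyse the two boundary pieces of Proposition \ref{P:filtration_theta_1} separately, using the values $s=3/2$ for $G=\SL_6$ (case $E_7$) and $s=7/2$ for $G=E_6$ (case $E_8$) from Proposition \ref{P:Theta_1}. In both pieces any irreducible quotient $\sigma$ has $\sigma_U\neq 0$, and $\sigma_U$ contains a character of $T$ that is read off by Frobenius reciprocity. Once that character is pinned down, the hermitian condition (the $w_0$-symmetry $|\chi|\mapsto(-z,-y,-x)$ discussed just before the lemma) cuts the possibilities to the listed ones.

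\emph{Top piece.} The top piece is $\Ind_B^{\SL_3}(\rho_U^{\frac12(s+\frac12)})$. For $s=3/2$ this is $\Ind_B(\rho_U)$, and for $s=7/2$ it is $\Ind_B(\rho_U^2)$. Since $\rho_U^{1/2}$ corresponds to $(1,0,-1)$, the exponents are $(2,0,-2)$ and $(4,0,-4)$ respectively.
\begin{itemize}
\item For $(4,0,-4)$ the principal series is irreducible: no two coordinates differ by $\pm1$. So the quotient is the induced representation, which is hermitian since it has a zero coordinate and is unitarily induced up to complementary symmetry.
\item For $(2,0,-2)$ the principal series is reducible, because the outer coordinates differ by $4=$ not $1$, but $2-0=2$. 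In fact differences $2,2,4$ never equal $1$, so it is again irreducible. This gives $|\cdot|^2\times1\times|\cdot|^{-2}$.
\end{itemize}

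\emph{Middle piece.} The middle piece is $\Ind_B(\rho_U^{\frac14(s+\frac12)}\cdot C_c(F^\times))$, where $T$ acts on $C_c(F^\times)$ through the middle entry $b$. An irreducible quotient is therefore a quotient of $\Ind_B(\chi)$ with $\chi=|\cdot|^{c}\lambda_1\times\lambda\times|\cdot|^{-c}\lambda_1'$. Here $c=1$ for $s=3/2$ and $c=2$ for $s=7/2$, and $\lambda$ is arbitrary. Concretely, the $a$- and $c$-entries are fixed by $\rho_U^{1/4\cdot}$ up to the determinant constraint, and the $b$-entry is free.

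Now impose the hermitian condition. The exponent vector is $(c+u,\,v,\,-c+u)$ with $v=-2u$, where $u,v$ are real. This must be Weyl conjugate to its negative-reverse, which forces one coordinate to be $0$. The cases $v=0$ give $u=0$, i.e.\ $|\cdot|^c\times\lambda\times|\cdot|^{-c}$ with $\lambda$ unitary. The alternatives $c+u=0$ or $c-u=0$ give $(0,2c,-2c)$ or $(2c,-2c,0)$, up to order. For these one checks by the symmetry that the multiset $\{0,\pm2c\}$ is preserved; these are the same vectors as the top piece, covered by induced reps with $\lambda$ possibly nonunitary. I would handle this by comparing unitary parts: the unitary characters must also match under $\chi\mapsto w_0\bar\chi^{-1}$, which forces the outer characters to be trivial after twisting.

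\emph{Reducibility and the trivial representation.} For $c=1$, $\lambda=1$ the vector is $(1,0,-1)$, where $\Ind_B(\rho_U^{1/2})$ is reducible; among its quotients the only hermitian one is the trivial representation. Its other constituents $\sigma_2,\sigma'_2$ are not quotients of this orientation, or are non-hermitian. For $\lambda\ne1$ the representation is irreducible. For $c=2$, reducibility occurs only if $\lambda=|\cdot|^{\pm1}$, which is not unitary, so every $\lambda$ gives an irreducible representation.

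The main obstacle is making the middle-piece analysis rigorous. Quotients of the non-admissible $\Ind_B(\ldots C_c(F^\times))$ are controlled by Frobenius reciprocity, but one must confirm that an irreducible quotient really embeds with a character of exactly this shape, i.e.\ that the central twist correctly fixes the outer exponents. One must also verify which constituent at $(1,0,-1)$ is a quotient rather than a sub.
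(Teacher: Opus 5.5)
Your proposal is correct and follows the paper's route: read off the top piece directly, and for the middle piece use second adjointness, $\Hom_{\SL_3}(\Ind_B(\rho_U^{c/2}C_c(F^\times)),\sigma)\cong\Hom_T(\rho_U^{c/2}C_c(F^\times),\sigma_{\bar U})$, to get exponents $(c-t,2t,-c-t)$. The hermitian symmetry then forces $t\in\{0,\pm c\}$, and for $t=\pm c$ comparing unitary parts forces $\lambda=1$. The step you flag as the main obstacle is exactly this one-line reciprocity: since $\sigma_{\bar U}$ is finite-dimensional, the image contains a $T$-eigencharacter, necessarily of the shape $\rho_U^{c/2}\mu(b)$, so $\sigma$ is a quotient of $\Ind_B(\chi)$ for such a $\chi$. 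Two small slips are harmless: $\Ind_B(|\cdot|^2\times1\times|\cdot|^{-2})$ is irreducible (your sentence first calls it reducible), and for $c=2$ reducibility also occurs at $\lambda=|\cdot|^{\pm3}$, which is still non-unitary.
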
 
\begin{proof} The first terms in both bullets come from the top of the filtration. For $G=\SL_6$, the middle piece of the filtration is $\Ind_B^{\SL_3}(\rho^{1/2}_U\cdot C_c(F^{\times}))$. By Frobenius reciprocity, 
\[ 
\Hom_{\SL_3}( \Ind_B^{\SL_3}(\rho_U^{1/2} \cdot C_c(F^{\times})),  \sigma)\cong \Hom_{T}( \rho_U^{1/2} \cdot C_c(F^{\times}),  \sigma_{\bar U}). 
\] 
If $\chi$ is a quotient of $\rho_U^{1/2} \cdot C_c(F^{\times})$, then 
\[ 
\chi = |\cdot |^{1-t} \times \lambda |\cdot |^{2t} \times |\cdot |^{-1-t} \sim \bar \lambda |\cdot |^{1-t} \times |\cdot |^{2t} \times \bar\lambda |\cdot |^{-1-t} 
\] 
for some real number $t$. So $\sigma_{\bar U}$ must contain such a $\chi$. Now, since $|\chi|=(1-t,2t,-1-t)$, $\sigma$ can be hermitian only if $t=0$ or $t=\pm1$. If $t=0$ and 
$\lambda\neq 1$ we get the claimed family. If $\lambda=1$, then $(1,0,-1)$ is an exponent of the trivial representation, so $\sigma$ is trivial. If $t=\pm1$ then $|\chi|$ is conjugated 
to $(2,0,-2)$ and $\lambda$ must be trivial for $\sigma$ to be hermitian. This completes the first bullet; the second is analogous.  
\end{proof} 
As a consequence of the above lemma, a unitary $\sigma$, unless it is the trivial representation of $G=\SL_6$, is not a quotient of the boundary of $\Theta(1_H)$. 
\begin{lemma} \label{L:boundary}
Let  $\pi$  be a hermitian irreducible representation of $G$.  If $\pi$ is a lift of a representation $\sigma$ appearing in the boundary of $\Theta(1_H)$, then it is trivial, minimal, or a representation induced from a unitary character of a Levi of type $\SL_2\times \SL_4$ in $\SL_6$ or $D_5$ in $E_6$. 
\end{lemma}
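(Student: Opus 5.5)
The plan is to go through the two boundary pieces of $\Theta(1_H)|_{\SL_3}$ from Proposition \ref{P:filtration_theta_1} and, for each possible $\sigma$, use the explicit Jacquet-module filtrations to see which hermitian $\pi$ can have $\sigma\otimes\pi$ as a quotient of $\Pi$. I would not classify $\sigma$ first. Instead I would start from $\pi$, fix a maximal parabolic $P=MN$ of $\SL_3$, and compare central characters and exponents inside the three-step filtration of $r_P(\Pi)$ in Theorem \ref{T:three_step}.

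First, the top boundary piece. Suppose $\sigma$ is a quotient of $\Ind_B^{\SL_3}(\rho_U^{\frac12(s+\frac12)})$, with $s=3/2$ or $7/2$. Then $r_P(\sigma)$ carries a $\GL_2$-quotient on which the center of $\GL_2$ acts by $|\cdot|^{2d+1}$ or $|\cdot|^{3d}$; this is the exponent computation behind Proposition \ref{P:lift_min} and Lemma \ref{L:hermitian_boundary}. This matches the top piece of $r_P(\Pi)$, whose $G$-constituents are only $1_G$ and $\pi_{\min}$. I would check that the middle and bottom pieces of $r_P(\Pi)$ cannot produce this exponent, since the $\delta$-twist and the mini-theta exponents are different. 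Applying $\Hom_{\GL_2}(r_P(\Pi),\,\cdot\,\otimes\pi)$ via Frobenius reciprocity then forces $\pi$ to be trivial or minimal.

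Second, the middle boundary piece $\Ind_B^{\SL_3}(\rho_U^{\frac14(s+\frac12)}\cdot C_c(F^\times))$. Here the hermitian $\sigma$ should already be pinned down, as in the proof of Lemma \ref{L:hermitian_boundary}: it is induced from $|\cdot|^{c}\times\lambda\times|\cdot|^{-c}$ with $\lambda$ unitary ($c=1$ for $\SL_6$, $c=2$ for $E_6$), or $\sigma$ is trivial. I would not restrict to unitary $\sigma$, because hermitian $\pi$ may lift non-hermitian $\sigma$. So the argument must use the Siegel/Heisenberg parabolic of $G$ instead.

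Concretely, I would run the computation of \S\ref{S:Siegel} (resp.\ \S\ref{S:heisenberg}) in reverse. A quotient $\sigma$ of the middle piece has a Jacquet module with an exponent of the form $\chi$ with $|\chi|=(c-t,2t,-c-t)$. Let $Q_1$ denote the $A_1\times A_3$ (resp.\ $D_5$) parabolic. I expect the Borel-induced structure to come, via the orbit analysis (i)/(ii) of $\Omega_0$ there, only from pieces where the $G$-side is induced from $Q_1$ or its opposite with a character. Then $\pi$ is a quotient of $\Ind_{Q_1}^G(\chi'\circ\varpi_1)$ for some character $\chi'$.

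Finally, I would impose hermiticity. As in the exponent argument preceding Lemma \ref{L:hermitian_boundary}, a hermitian subquotient of $\Ind_{Q_1}^G(\chi'\circ\varpi_1)$ forces the real part of the exponent to be $0$ or a reducibility point. At $0$, $\pi$ is irreducibly induced from a unitary character, using \cite{W03} for $E_6$. At a reducibility point, the hermitian constituents are the trivial and minimal representations, or $\pi_0$; the rest are excluded by Propositions \ref{P:theta_siegel} and \ref{P:heisenberg}.

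The main obstacle is the second step. One has to be sure that a $\sigma$ supported on the middle boundary piece cannot correspond, through the bottom (mini-theta) piece of $r_P(\Pi)$, to some other hermitian $\pi$ induced from $Q$. I would try to rule this out by comparing the full cuspidal supports on both sides.
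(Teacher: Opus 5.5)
There is a genuine gap. It concerns the bottom (mini-theta) piece of $r_P(\Pi)$ in Theorem \ref{T:three_step}, and it cannot be closed in the way you plan.

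In your first step you say you will check that the middle and bottom pieces cannot produce the exponent of the top boundary piece. That check fails.
\begin{itemize}
\item \emph{Bottom piece.} Every irreducible $\tau$ of $\GL_2$ has a non-zero mini-theta lift (Appendix \ref{S:mini}). So $\Ind_Q^G(W)$ has quotients $\tau\otimes(\cdot)$ for \emph{every} $\tau$, and no exponent or central-character argument can exclude it.
\item \emph{Middle piece.} It is excluded only when $r_{\bar B}(\tau)$ contains no character $\chi\otimes|\cdot|^d$. This fails for the $\tau$ induced from $|\cdot|^d\times|\cdot|^{2d}$ that occurs in the top piece.
\end{itemize}
So the first step does not give ``trivial or minimal''.

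In your last paragraph you propose to rule the bottom piece out via cuspidal supports, but its contribution is real. For example, $\Ind_Q^{E_6}(\chi\circ\varpi_1)$ with $\chi$ unitary lifts (Proposition \ref{P:heisenberg}) to $\Ind(|\cdot|^2\times\chi\times|\cdot|^{-2})$, a quotient of the middle boundary piece. With the embedding described below, this lift comes from the mini-theta piece alone.

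Running \S\ref{S:Siegel} and \S\ref{S:heisenberg} ``in reverse'' does not repair this. Those filtrations describe Jacquet modules of $\Pi$ along a parabolic of $G$. They constrain $\pi$ only if you already know that $\pi$ embeds in a representation induced from that parabolic, which is essentially the conclusion. Moreover, there the Borel-induced representations of $\SL_3$ are paired, through orbit (iii), with representations of $M$ induced from a further parabolic, not with characters of $Q_1$.

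What is missing is an embedding of $\sigma$ that makes the bottom piece harmless rather than absent.
\begin{itemize}
\item For $\sigma$ in either boundary piece, $\sigma_{\bar U}$ contains $\chi=\lambda|\cdot|^{c-t}\times|\cdot|^{2t}\times\lambda|\cdot|^{-c-t}$ with $\lambda$ unitary. The top piece is an irreducible principal series, so it is covered by $t=-c$, $\lambda=1$ after a Weyl conjugation.
\item Away from the reducibility walls you may interchange the first two entries. Since $\bar U\subset P$, this gives $\sigma\hookrightarrow\Ind_P^{\SL_3}(\tau)$ with $\tau=\mu|\cdot|^{c}\times\mu|\cdot|^{-c}$ and $\mu=\lambda|\cdot|^{-3t}$.
\item The ratio $|\cdot|^{2c}=|\cdot|^{d}=|\cdot|^{n-1}$ is exactly the exceptional point of the mini-theta correspondence. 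There the lift of $\tau$ to $L$ is the character $\mu$ composed with the similitude character.
\item Frobenius reciprocity against the three pieces of $r_P(\Pi)$ then forces one of three outcomes:
\begin{itemize}
\item from the top piece, $\pi$ is trivial or minimal;
\item from the middle piece, $\pi$ is a quotient of a degenerate principal series for $\bar Q$;
\item from the bottom piece, $\pi$ is a quotient of the degenerate principal series for $Q$ induced from that character.
\end{itemize}
\end{itemize}
This handles both boundary pieces uniformly.

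Only after this do you impose hermiticity on constituents of these degenerate series, as in your final step. Note that Propositions \ref{P:theta_siegel} and \ref{P:heisenberg} compute lifts; they do not by themselves exclude any constituent.
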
 
\begin{proof}  
We work this out in the $G=E_6$ case.  Let $Q$ and $\bar Q$ denote a $D_5$ maximal parabolic subgroup and its opposite. These two maximal parabolic subgroups are not conjugate; however, there is an intertwining operator between the corresponding degenerate principal series representations. 
We shall show that a $\sigma$ appearing in the boundary lifts to a quotient of one of the two degenerate series. Among these, the only hermitian representations 
 are precisely those from the statement of the lemma. This will complete the proof. 

We know that $\sigma_{\bar U}$ contains a character  
\[ 
\chi=  \lambda |\cdot |^{2-t} \times |\cdot |^{2t} \times \lambda |\cdot |^{-2-t} 
\] 
where $\lambda$ is a unitary character. If $\lambda \neq 1$, or if $\lambda=1$ and $t\neq 1, 1/3$ then $\chi$ is not on one of the reducibility walls and thus $\sigma_{\bar U}$ also contains the character 
\[ 
\chi=  |\cdot |^{2t}\times \lambda |\cdot |^{2-t}  \times \lambda |\cdot |^{-2-t} .
\] 
Thus, using induction in stages (observe that $\bar U\subset P$), $\sigma$ is a submodule of $\Ind_P^{\SL_3}(\tau)$ where $\tau$ is induced from the character 
(in $\GL_2$-coordinates) 
\[ 
 \lambda |\cdot |^{2-3t}  \times \lambda |\cdot |^{-2-3t} = \mu |\cdot |^2 \times \mu |\cdot |^{-2}, 
\] 
where $\mu = \lambda |\cdot |^{-3t}$.
Now, using the Frobenius reciprocity, if $\pi$ is a lift of $\sigma$, then $\tau\otimes \pi$ is a quotient of $r_P(\Pi)$.  Recall, by Theorem \ref{T:three_step}, that the top piece of $r_P(\Pi)$ 
is a finite-length $(\GL_2\times G)$-module, with $G$-factors the minimal and the trivial representation. If $\pi$ is a quotient of the middle piece, then it is 
a quotient of the degenerate principal series for $\bar Q$, as desired.  Finally, we look at the bottom piece. It includes the mini-theta from $\GL_2$ to $L$, the Levi of $Q$. 
Now, the lift of $\tau$ is the one-dimensional representation of $L$, the similitude character composed with $\mu$. 
Hence $\pi$ has to be a quotient of the degenerate series for $Q$. 
\end{proof}

\subsection{Ping-pong}\label{subs:pp}
Let $\sigma$ and $\pi$ be finite-length representations of $\SL_3$ and $G$ such that $\sigma\otimes\pi$ is a quotient of $\Pi$. Then, for every $a\in F^{\times}$, 
 we have the following sequences of inclusions
\[ 
\Hom_{U}(\sigma, \psi_a) \subseteq \Hom_{U}(\Theta(\pi), \psi_a) \simeq \Hom_G(\Pi_{U, \psi_a},\pi) \simeq \Hom_G(C_c(G/H_a), \pi) \simeq \Hom_{H_a}(\pi^{\vee}, \mathbb C), 
\] 
where the first is obvious, the second is a see-saw identity, and the last is Frobenius reciprocity. 
Assume that $\sigma$ does not appear in the boundary of $\Pi_{H_a}$. Then we have the second sequence 
\[ 
\Hom_{H_a}(\pi, \mathbb C)  \subseteq \Hom_{H_a}(\Theta(\sigma), \mathbb C) \simeq \Hom_{\SL_3} (\Pi_{H_a}, \sigma) \subseteq 
\Hom_{\SL_3} (\ind_U^{\SL_3}(\psi_{ab}), \sigma^{\vee}) \simeq \Hom_U(\sigma^{\vee}, \psi^{\vee}_{ab}) 
\] 
where the second is another see-saw identity, while the third uses the assumption on $\sigma$. 
Now, if $\pi$ is hermitian, then $\pi^{\vee} \cong \bar \pi$, thus
\[ 
\Hom_{H_a}(\pi^{\vee}, \mathbb C) \cong \Hom_{H_a}(\bar\pi, \mathbb C)\cong \Hom_{H_a}(\pi, \mathbb C) 
\] 
and the two sequences of inclusions connect. We keep this ping-pong rally going and run this sequence of inclusions starting with $\Hom_{U}(\sigma, \psi_{ab})$, ending with 
$\Hom_{U}(\sigma^{\vee}, \psi^{\vee}_{ab^2})$, and then starting with $\Hom_{U}(\sigma, \psi_{ab^2})$, ending with 
$\Hom_{U}(\sigma^{\vee}, \psi^{\vee}_{ab^3})=\Hom_{U}(\sigma^{\vee}, \psi^{\vee}_{a})$. By Lemma 2 of \cite{Pra19}, we have
\[ 
\dim \Hom_{U}(\sigma, \psi) = \dim \Hom_{U}(\sigma^{\vee}, \psi^{\vee}) 
\]  
for any Whittaker character $\pi$, so we see that all inclusions are isomorphisms!  In particular, we have 
\begin{equation} \label{E:ping_pong} 
\Hom_{U}(\sigma, \psi_a) \cong \Hom_{U}(\Theta(\pi), \psi_a) \cong \Hom_{H_a}(\pi,\mathbb C) \cong \Hom_{H_a}(\Theta(\sigma), \mathbb C) 
\end{equation} 
for all $a$, assuming that $\sigma$ does not appear in the boundary of $\Theta(1_{H})$ and $\pi$ is hermitian. However, for elegance's sake, we prove: 

\begin{lemma}\label{L:bcube} $b$ is a cube. 
\end{lemma}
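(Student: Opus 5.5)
We want to show that $b$ lies in $(F^\times)^3$. The plan is to run the ping-pong chain for one well-chosen pair $(\sigma,\pi)$. For that pair, the multiplicity-one input forces the Whittaker character produced at the end of one rally to coincide with the one we started from.

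First, fix an irreducible supercuspidal representation $\sigma$ of $\SL_3$. Choose it to be $\psi_a$-generic for exactly one class $a\in F^\times/(F^\times)^3$. Such $\sigma$ exist: take an irreducible generic supercuspidal $\tilde\sigma$ of $\GL_3$ whose restriction to $\SL_3$ decomposes into several summands, or more simply observe that any irreducible summand of the restriction of a generic $\tilde\sigma$ is generic for exactly one $\SL_3$-orbit of Whittaker characters. Because $\sigma$ is supercuspidal, it does not appear in the boundary of $\Pi_{H_a}$; indeed the top two pieces in Proposition \ref{P:filtration_theta_1} are parabolically induced, so a supercuspidal cannot be a quotient of them. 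Next, we need $\Theta(\sigma)\neq 0$ and an irreducible hermitian quotient $\pi$ of it. Nonvanishing follows from the first chain: $\Hom_U(\Pi,\psi_a)$ restricted to the $\sigma$-isotypic part is nonzero, so $\sigma$ occurs as a quotient of $\Pi$. We may take $\sigma$ unitary, since supercuspidals are unitarizable. Then I would take $\pi$ to be an irreducible quotient of $\Theta(\sigma)$. Showing that $\pi$ can be chosen hermitian is the delicate point. I would use that $\bar\sigma\cong\sigma^\vee$, and that the theta correspondence commutes with complex conjugation and contragredient up to the MVW-type involution; this should force the unique quotient to be hermitian. Failing that, I would use that $\Theta(\sigma)$ is unitarizable when $\sigma$ is, via the $L^2$ / matrix coefficient argument for supercuspidal $\sigma$, which gives a unitary quotient $\pi$.

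With such $(\sigma,\pi)$ in hand, I would run the two sequences of \S\ref{subs:pp} once, starting from $\Hom_U(\sigma,\psi_a)\neq 0$. The second sequence produces a nonzero element of $\Hom_U(\sigma^\vee,\psi^\vee_{ab})$, so $\sigma^\vee$ is $\psi_{ab}^{-1}$-generic. Since $\sigma$ is unitary, $\sigma^\vee\cong\bar\sigma$, and therefore $\sigma$ is $\psi_{ab}$-generic (conjugation sends $\psi^{-1}$ back to $\psi$). By the choice of $\sigma$ as generic for a unique class, we get $ab\equiv a$ modulo cubes, hence $b$ is a cube.

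The main obstacle is the existence of a $\sigma$ generic for exactly one class of Whittaker data, together with a hermitian $\pi$ attached to it. For the first part, I would use that the $\GL_3$-conjugation action permutes the summands of $\tilde\sigma|_{\SL_3}$ and the Whittaker classes compatibly via $\det$ (as explained after Proposition \ref{p:SL3toG2}). If $\tilde\sigma|_{\SL_3}$ has three summands, for instance $\tilde\sigma\cong\tilde\sigma\otimes\omega$ for a cubic character $\omega$, then each summand is generic for exactly one class. Such $\tilde\sigma$ exist by automorphic induction from a cubic extension whenever $F^\times/(F^\times)^3$ is nontrivial; if it is trivial, then $b$ is trivially a cube. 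Alternatively, one could avoid hermitian issues altogether by running the rally three times as in \S\ref{subs:pp} and comparing dimensions.
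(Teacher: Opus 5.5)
Your skeleton matches the paper's: one rally of \S\ref{subs:pp}, from $\Hom_U(\sigma,\psi_a)\neq 0$ to $\Hom_U(\sigma^\vee,\psi_{ab}^\vee)\neq 0$, for a $\sigma$ whose Whittaker support separates $a$ from $ab$. But the step you call delicate is a genuine gap, and the paper's choice of $\sigma$ is designed to avoid it.

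The rally needs an irreducible \emph{hermitian} $\pi$ with $\sigma\otimes\pi$ a quotient of $\Pi$. No MVW-type involution is available for this correspondence. Also, ``the unique quotient'' presupposes Howe duality, which is Theorem \ref{thm_main1} and is itself proved by this ping-pong. The unitarity route is plausible, but as written it still needs $\Theta(\sigma)\neq 0$ and the existence of an irreducible quotient.

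Your nonvanishing argument is circular. The first chain already presupposes a quotient $\sigma\otimes\pi$ of $\Pi$, and $\Pi_{U,\psi_a}\cong C_c(G/H_a)\neq 0$ says nothing about a particular supercuspidal $\sigma$. Nonvanishing could be recovered from the second see-saw: a supercuspidal $\sigma$ is injective, and the bottom of $\Pi_{H_{a'}}$ is $\ind_U^{\SL_3}(\psi_{a'b})$. Even so, hermiticity of a quotient would still be missing.

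Running three rallies does not help, because every rally uses $\Hom_{H_a}(\pi^\vee,\mathbb C)\cong\Hom_{H_a}(\pi,\mathbb C)$. The paper instead takes $\sigma$ to be a summand of the unitary principal series $\Ind_B^{\SL_3}(1\times\omega\times\omega^2)=\Ind_P^{\SL_3}(\tau)$. Corollary \ref{C:unitary} then makes $\Theta(\sigma)$ a summand of the finite-length unitary representation $\Ind_Q^G(\Theta_{\mini}(\tau))$. So any irreducible summand $\pi$ is unitary, hence hermitian, and Lemma \ref{L:hermitian_boundary} keeps $\sigma$ off the boundary.

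Second, your requirement that $\sigma$ be generic for exactly one class fails as you set it up. Your first version is false whenever $\tilde\sigma|_{\SL_3}$ is irreducible, since $\sigma$ is then generic for every class. In the refined version, with $\tilde\sigma\otimes\omega\cong\tilde\sigma$ and three summands, each summand has Whittaker support equal to one coset of $\ker\omega$ in $F^\times/(F^\times)^3$. That coset is a single class only when $|F^\times/(F^\times)^3|=3$. This fails when the residue field has $q\equiv 1\pmod 3$ elements, or when $p=3$, so ``$ab\equiv a$'' does not follow in general. Note also that $F^\times/(F^\times)^3$ is never trivial, since a uniformizer is not a cube.

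The fix is the paper's: argue by contradiction and choose a cubic $\omega$ with $\omega(b)\neq 1$. Then use a $\sigma$ whose Whittaker support is a single coset of $\ker\omega$; for the summands of $1\times\omega\times\omega^2$ this is Kuo's theorem. Such a $\sigma$ is $\psi_a$-generic for some $a$ but not $\psi_{ab}$-generic, which contradicts the rally.
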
 
\begin{proof} If $b$ is not a cube then there exists a cubic character $\omega: F^{\times} \rightarrow \mathbb C^{\times}$ such that $\omega(b)\neq 1$.  The principal series of 
$\SL_3$ induced from the unitary character $1\times \omega\times \omega^2$ decomposes as a sum of three representations. The main result of \cite{Kuo02} 
states that, for each summand $\sigma$, 
\[ 
\dim \sigma_{U,\psi_a}=\dim \sigma_{U,\psi_{ac}}
\] 
for all $a\in F^{\times}$ and all $c$ in the kernel of $\omega$. As all three summands are generic for some choice of Whittaker characters, it follows that each summand has the Whittaker support on precisely one orbit, and the support defines a bijection between the summands and the orbits of the kernel of $\omega$. 
Hence, for a summand $\sigma$, there exists $a\in F^{\times}$ such 
that $\dim \sigma_{U,\psi_a}=1$ and $\dim \sigma_{U,\psi_{ab}}=0$. 
Clearly $\sigma$ is unitary, hence not a quotient of the boundary, and by Corollary \ref{C:unitary}, $\sigma$ lifts to a unitary representation $\pi$. Hence we can run the two 
sequences of inclusions, starting with $\Hom_U(\sigma,\psi_a)$,  which gives
\[ 
1=\dim \sigma_{U,\psi_a} \leq \ldots \leq \dim \sigma^{\vee}_{U,\psi^{\vee}_{ab}}=\dim \sigma_{U,\psi_{ab}}=0,
\] 
 a contradiction.  
\end{proof}

  Now we derive several consequences:  

\begin{thm}\label{thm_main1} Let $\sigma$ be an irreducible unitary Whittaker-generic representation of $\SL_3$.  Then $\Theta(\sigma)$ is irreducible. 
If $\sigma'$ is another irreducible unitary Whittaker-generic representation of $\SL_3$, perhaps for a different choice of Whittaker character, 
then $\Theta(\sigma)\cong \Theta(\sigma')$  implies 
$\sigma\cong\sigma'$.  
\end{thm}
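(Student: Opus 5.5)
The plan is to run the period ping-pong of \S\ref{subs:pp}, in the form \eqref{E:ping_pong}, on suitable pairs $(\sigma,\pi)$. The case $G=\SL_3\times\SL_3$ is already covered by Section \ref{S:3SL3}, so I assume $G=\SL_6$ or $E_6$.

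First, since $\sigma$ is unitary and generic, Lemma \ref{L:hermitian_boundary} shows that $\sigma$ is not a quotient of the boundary of $\Theta(1_{H_a})$. The only exception would be the trivial representation of $\SL_6$, which is not generic. Next I show $\Theta(\sigma)\neq 0$ and that it has finite length. For non-cuspidal $\sigma$ this follows from Corollary \ref{C:unitary}. For supercuspidal $\sigma$, choose $a$ with $\sigma$ $\psi_a$-generic. The second ping-pong chain, read backwards from $\Hom_U(\sigma^\vee,\psi_{ab}^\vee)\neq0$, gives a nonzero element of $\Hom_{\SL_3}(\Pi_{H_a},\sigma)$. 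Hence $\Theta(\sigma)\neq0$.

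For irreducibility, let $\pi$ be any irreducible quotient of $\Theta(\sigma)$. The key point is that $\pi$ must be hermitian. I would argue this by unitarizing $\Theta(\sigma)$, or at least showing its cosocle is hermitian. For non-cuspidal $\sigma$ this comes from Corollary \ref{C:unitary}: $\Theta(\sigma)$ is a summand of a unitary induced representation. For supercuspidal $\sigma$, I would use that $\bar\sigma\cong\sigma^\vee$ and that complex conjugation of $\Pi$ intertwines $\Theta(\bar\sigma)$ with $\overline{\Theta(\sigma)}$. This gives $\bar\pi$ as a quotient of $\Theta(\sigma^\vee)$, which is related to $\pi^\vee$ via the MVW-type contragredient symmetry of the pair. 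This hermitian step is the main obstacle. If the conjugation/contragredient argument fails, the fallback is an $L^2$/matrix coefficient argument showing that the lift of a supercuspidal is a subspace of a unitary representation.

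Granting that $\pi$ is hermitian, \eqref{E:ping_pong} (with $b$ a cube, by Lemma \ref{L:bcube}) gives
\[
\dim\Hom_{H_a}(\Theta(\sigma),\mathbb C)=\dim\Hom_U(\sigma,\psi_a)=\dim\Hom_{H_a}(\pi,\mathbb C)\le1 .
\]
Pick $a$ so that the common dimension is $1$. Suppose $\Theta(\sigma)$ were reducible. Every irreducible quotient $\pi'$ satisfies $\dim\Hom_{H_a}(\pi',\mathbb C)=1$, so the cosocle is irreducible: two quotients would give a $2$-dimensional $\Hom_{H_a}$. It then remains to kill the kernel $K$ of $\Theta(\sigma)\to\pi$. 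The ping-pong gives $\Hom_{H_a}(\Theta(\sigma),\mathbb C)=\Hom_{H_a}(\pi,\mathbb C)$, so no $H_a$-functional is supported on $K$. To exploit this, I would apply the same chain to the dual, i.e.\ to $\Theta(\sigma^\vee)$ and its hermitian quotient. I would then use that $\Theta(\sigma)$ is a quotient of $\Pi$ on which $\sigma$ acts, together with $\Ext$-vanishing for the hermitian pieces, as in Proposition \ref{P:gl3_correspondence}, to conclude that $K$ has a hermitian irreducible quotient. That quotient would carry a nonzero $H_{a'}$-period for some $a'$, contradicting the equalities. For non-cuspidal $\sigma$, Corollary \ref{C:unitary} already exhibits $\Theta(\sigma)$ as a summand of a unitary, hence semisimple, module, so $\Theta(\sigma)=\pi$ directly.

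Finally, for injectivity, suppose $\Theta(\sigma)\cong\Theta(\sigma')=\pi$, which is irreducible and hermitian. Then $\sigma\otimes\pi$ and $\sigma'\otimes\pi$ are both quotients of $\Pi$, so $\sigma,\sigma'$ are quotients of $\Theta(\pi)$. By \eqref{E:ping_pong}, $\dim\Hom_U(\Theta(\pi),\psi_a)=\dim\Hom_U(\sigma,\psi_a)\le1$ for every $a$. Taking $a$ with $\sigma'$ $\psi_a$-generic, $\sigma$ is also $\psi_a$-generic. If $\sigma\not\cong\sigma'$, then $\sigma\oplus\sigma'$ would be a quotient of $\Theta(\pi)$, giving $\dim\Hom_U(\Theta(\pi),\psi_a)\ge2$, a contradiction. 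Hence $\sigma\cong\sigma'$.
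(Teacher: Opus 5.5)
Your overall architecture (ping-pong applied to hermitian quotients, then the injectivity count) matches the paper's, and your injectivity argument is essentially the paper's. The gap is in the irreducibility step when $\sigma$ is supercuspidal. There $\Theta(\sigma)$ is genuinely non-cuspidal (cf.\ Proposition \ref{P:tempered}), so its structure has to be controlled.

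Your step ``kill the kernel $K$ of $\Theta(\sigma)\to\pi$'' fails as written, for three reasons:
\begin{enumerate}
\item An irreducible quotient $\pi''$ of $K$ is only a subquotient of $\Theta(\sigma)$. So $\sigma\otimes\pi''$ need not be a quotient of $\Pi$, and \eqref{E:ping_pong} says nothing about $\pi''$.
\item Nothing forces $\pi''$ to be hermitian.
\item Even a nonzero $H_{a'}$-period on $\pi''$ is only a functional on $K$. It need not extend to $\Theta(\sigma)$, so there is no contradiction with $\Hom_{H_a}(\Theta(\sigma),\mathbb C)=\Hom_{H_a}(\pi,\mathbb C)$.
\end{enumerate}
You also never show that $\Theta(\sigma)$ has finite length, or even has an irreducible quotient, for supercuspidal $\sigma$. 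Finally, the MVW-type route to hermitian-ness has nothing to stand on: nothing available in this exceptional setting relates $\Theta(\sigma^\vee)$ to $\Theta(\sigma)^\vee$.

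The missing idea is to unitarize \emph{all} of $\Theta(\sigma)$, not just its cosocle, and combine this with finiteness; this is exactly how the paper argues. For supercuspidal $\sigma$ ($\SL_3$ has compact center), the $\sigma$-isotypic component of $\Pi|_{\SL_3}$ is a direct summand, isomorphic to $\sigma\otimes\Theta(\sigma)$ as an $\SL_3\times G$-module. The invariant inner product on $\Pi$ then induces a $G$-invariant one on $\Theta(\sigma)$. This is your ``fallback'', and it should be the main line. For non-cuspidal $\sigma$, use Corollary \ref{C:unitary}; the Steinberg representation, which that corollary does not cover, is handled by the direct computation in Section \ref{S:noncusp}.

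The non-cuspidal part of $\Theta(\sigma)$ has finite length; for supercuspidal $\sigma$ this rests on the cuspidal-support and Jacquet-module analysis of Sections \ref{S:support}--\ref{S:heisenberg}. The cuspidal part is automatically semisimple. Hence $\Theta(\sigma)$ is a direct sum of irreducible unitary representations. Each summand $\pi$ is then a hermitian quotient, so \eqref{E:ping_pong} gives $\dim\Hom_{H_a}(\pi,\mathbb C)=\dim\Hom_{H_a}(\Theta(\sigma),\mathbb C)=1$ for every summand. This forces a single summand, and no kernel argument is needed.

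A smaller point: your ``reading backwards'' nonvanishing argument needs the restriction $\Hom_{\SL_3}(\Pi_{H_a},\sigma)\to\Hom_{\SL_3}(\ind_U^{\SL_3}(\psi_{ab}),\sigma)$ to be surjective. This holds because the boundary pieces lie in Bernstein components different from that of $\sigma$, but you should say so.
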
  
\begin{proof}  We know that $\Theta(\sigma)$ is unitarizable and that its non-cuspidal part is of finite length. Hence $\Theta(\sigma)$ is a direct sum of irreducible unitary representations.  Let 
$\pi$ be any non-zero direct summand of $\Theta(\sigma)$.  Let $\psi_a$ be a Whittaker character such that $\sigma_{U,\psi_a}\neq 0$ (and therefore $\dim \sigma_{U,\psi_a} =1$). 
Then, by (\ref{E:ping_pong}), 
\[  
\dim \Hom_{H_a}(\pi, \mathbb C)=\dim \Hom_{H_a}(\Theta(\sigma), \mathbb C)=1. 
\] 
Since $\pi$ is any non-zero direct summand, it follows that $\Theta(\sigma)$ must be irreducible. 
If $\Theta(\sigma)\cong\Theta(\sigma')\cong\pi$, then we also have  (\ref{E:ping_pong}) with $\sigma'$ in place of $\sigma$. It follows that $\sigma'$ is also 
generic for the same character $\psi_a$.  If $\sigma$ is not isomorphic to 
$\sigma'$, then $\sigma\oplus\sigma'$ is a quotient of $\Theta(\pi)$ and we have 
\[
2= \dim (\sigma_{U,\psi_a} )+ \dim (\sigma'_{U,\psi_a} )\leq \dim \Theta(\pi)_{U,\psi_a} =1,
\] 
a contradiction. 
\end{proof}

\begin{prop} Let $a\in F^{\times}$. 
Let  $\pi$  be a hermitian irreducible representation of $G$, but not one of those listed in Lemma \ref{L:boundary}. 
Then $\Hom_G(C_c(G/H_a), \pi)$ is at most one-dimensional. It is one-dimensional if and only if 
$\pi$ is a lift of a $(U,\psi_a)$-generic $\sigma$. 
\end{prop}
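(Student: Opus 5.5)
We reduce to the small theta lift of $\pi$ and run the ping-pong of \S\ref{subs:pp}. By Frobenius reciprocity and the see-saw identity,
\[
\Hom_G(C_c(G/H_a),\pi)\cong \Hom_U(\Theta(\pi),\psi_a),
\]
so it suffices to show that $\dim\Theta(\pi)_{U,\psi_a}\le 1$, and that equality holds exactly when $\pi$ is the lift of a $\psi_a$-generic $\sigma$. By the finiteness proposition of \S\ref{S:noncusp}, $\Theta(\pi)$ has finite length.

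\textbf{Case $\Theta(\pi)_{U,\psi_a}=0$.} Here there is nothing to prove, except to record the converse. If $\pi=\theta(\sigma)$ with $\sigma$ a $\psi_a$-generic representation, then $\sigma$ is a quotient of $\Theta(\pi)$. Hence $\Theta(\pi)_{U,\psi_a}\supseteq\sigma_{U,\psi_a}\neq 0$, and the space in question is nonzero.

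\textbf{Case $\Theta(\pi)_{U,\psi_a}\neq 0$.} Since $\Theta(\pi)$ has finite length, some irreducible subquotient is $\psi_a$-generic. I would first produce an irreducible $\psi_a$-generic \emph{quotient} $\sigma$ of $\Theta(\pi)$, so that $\sigma\otimes\pi$ is a quotient of $\Pi$. One route is to take $\sigma$ to be the quotient through which a nonzero functional in $\Hom_U(\Theta(\pi),\psi_a)$ factors: choose a nonzero $\ell$, take the image of $\Theta(\pi)$ in the space of Whittaker functions, and pass to an irreducible quotient of that image on which $\ell$ survives. The existence of such a quotient is the delicate point.

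Next, since $\pi$ is not one of the representations listed in Lemma \ref{L:boundary}, $\sigma$ does not appear in the boundary of $\Theta(1_{H_a})$. We may therefore run both sequences of inclusions of \S\ref{subs:pp}, using that $\pi$ is hermitian to connect them, and that $b$ is a cube (Lemma \ref{L:bcube}) together with \cite{Pra19} to close the loop. This yields \eqref{E:ping_pong}:
\[
\Hom_U(\sigma,\psi_a)\cong\Hom_U(\Theta(\pi),\psi_a)\cong\Hom_{H_a}(\pi,\mathbb C).
\]
Since $\sigma$ is irreducible and generic, the left side is one-dimensional by uniqueness of Whittaker models for $\SL_3$. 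This gives the bound $\dim\le1$, and it shows that $\pi$ is a lift of a $\psi_a$-generic $\sigma$.

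\textbf{Main obstacle.} The step I expect to be hardest is producing the generic irreducible quotient $\sigma$ when $\Theta(\pi)$ is not semisimple. A $\psi_a$-generic constituent might a priori sit only in the interior of $\Theta(\pi)$. I would handle this as follows. Take any irreducible quotient $\sigma_0$ of $\Theta(\pi)$; it is nonzero because $\Theta(\pi)\neq0$. If $\sigma_0$ is generic, run the argument above. If $\sigma_0$ is degenerate, check that it is not a boundary representation; by Lemma \ref{L:hermitian_boundary} and the assumption on $\pi$, the boundary representations are excluded. Then run the second sequence of \S\ref{subs:pp} starting from $\Hom_{H_a}(\pi,\mathbb C)$ with $\sigma_0$ in place of $\sigma$. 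This gives
\[
\Hom_{H_a}(\pi,\mathbb C)\subseteq \Hom_U(\sigma_0^\vee,\psi_{ab}^\vee)=0.
\]
Via the first sequence this forces $\Theta(\pi)_{U,\psi_a}=0$, contradicting the assumption of this case. Hence every irreducible quotient is generic and the argument above applies.
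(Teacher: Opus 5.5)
Your proof is correct and follows the paper's route: the see-saw identity, finite length of $\Theta(\pi)$, an irreducible quotient $\sigma$ kept out of the boundary by Lemma \ref{L:boundary} (cite that lemma, not Lemma \ref{L:hermitian_boundary}, since $\sigma$ need not be hermitian), and then \eqref{E:ping_pong}. Your ``main obstacle'' is not a real one, because \eqref{E:ping_pong} already gives $\Hom_U(\Theta(\pi),\psi_a)\cong\Hom_U(\sigma,\psi_a)$ for any such irreducible quotient $\sigma$ (your degenerate-case argument is exactly this), so the paper just takes an arbitrary irreducible quotient without splitting into cases.
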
 
\begin{proof} The usual see-saw 
\[ 
\Hom_{U}(\Theta(\pi), \psi_a)  \simeq \Hom_G(C_c(G/H_a), \pi) 
\] 
implies that $\Hom_G(C_c(G/H_a), \pi)$ is non-zero only if $\Theta(\pi)$ is non-zero. 
Since $\Theta(\pi)$ has finite length, it has an irreducible quotient $\sigma$. By the assumption on $\pi$, via Lemma \ref{L:boundary},  $\sigma$ is not a 
quotient of the boundary of $\Theta(1_{H_a})$, hence the isomorphisms (\ref{E:ping_pong}) hold. This implies the desired conclusion.  
\end{proof}

\begin{thm}\label{thm_main2}  Let $\pi$ be an irreducible hermitian representation of $G$. 
Then $\dim \Hom_H(\pi, \mathbb C) \leq 1$. It is 0 for the minimal representation (the case $G=E_6$). 
\end{thm}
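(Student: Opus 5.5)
The plan is to split into two cases according to whether $\pi$ appears in the list of Lemma \ref{L:boundary}. Suppose first that $\pi$ is hermitian and not one of the exceptional representations listed there (trivial, minimal, or induced from a unitary character of the $\SL_2\times\SL_4$, resp.\ $D_5$, Levi). Then the preceding Proposition applies directly. Frobenius reciprocity gives $\Hom_{H_a}(\pi,\mathbb C)\cong \Hom_G(C_c(G/H_a),\pi^{\vee})$. Since $\pi^{\vee}\cong\bar\pi$ is again hermitian and not in the exceptional list (the list is stable under conjugation and contragredient), $\Hom_G(C_c(G/H_a),\pi^\vee)$ has dimension at most one. Because $H$ is some $H_a$, this settles the generic case.

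It remains to treat the exceptional representations one by one, via the see-saw identity
\[
\Hom_{H_a}(\pi^{\vee},\mathbb C)\simeq \Hom_G(C_c(G/H_a),\pi)\simeq \Hom_U(\Theta(\pi),\psi_a),
\]
so that the task becomes computing $\dim\Theta(\pi)_{U,\psi_a}$ using the explicit big theta lifts already determined.

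For $G=E_6$ we use Proposition \ref{P:heisenberg}.
\begin{itemize}
\item $\Theta(1_G)$ is an irreducible principal series with infinitesimal character $(4,0,-4)$. Being irreducible and fully induced from the Borel, it has a one-dimensional $\psi_a$-Whittaker space.
\item $\Theta(\pi_{\min})$ is a sum of two degenerate representations. These have no Whittaker functionals, so the period vanishes; this gives the claimed $0$ for the minimal representation.
\item For $\pi$ induced from $\chi\circ\varpi_1$ with $\chi$ unitary, $\Theta(\pi)$ is an irreducible principal series, hence again has a one-dimensional Whittaker space.
\end{itemize}

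For $G=\SL_6$ we use Proposition \ref{P:theta_siegel}.
\begin{itemize}
\item $\Theta(1_G)$ is irreducible principal series with infinitesimal character $(2,0,-2)$, so the dimension is $1$.
\item For $\chi\neq1$ unitary, $\Theta(\pi)$ is irreducible induced from $|\cdot|\times\chi\times|\cdot|^{-1}$, so the dimension is $1$.
\item For $\pi_0$, the Proposition gives $\dim\Theta(\pi_0)_{U,\psi}\le1$ directly.
\end{itemize}
The non-hermitian minimal representations of $\SL_6$ need no treatment. In every case the dimension is at most one.

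The main obstacle I expect is the bookkeeping with the Whittaker datum. In the exceptional cases the $\psi_a$-coinvariants of an irreducible principal series of $\SL_3$ are one-dimensional for every $a$; this is fine, since uniqueness of Whittaker models gives $\le1$. For $\pi_0$, however, the Proposition was stated for a fixed $\psi$, and I would need to check that its proof (reduction to the $E_6$ correspondence) works uniformly in $a$. I would handle this by noting that the argument via Proposition \ref{P:gl3_correspondence} and the multiplicity-one summand statement in Section \ref{S:3SL3} is insensitive to $a$.
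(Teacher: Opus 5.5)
Your proposal is correct and follows the paper's proof. Hermitian $\pi$ outside the list of Lemma \ref{L:boundary} are handled by the ping-pong, via the proposition preceding the theorem (applied to $\pi^{\vee}$, or combined with $\pi^{\vee}\cong\bar\pi$). The representations in that list are handled by reading off $\dim\Theta(\pi)_{U,\psi_a}$ from Propositions \ref{P:theta_siegel} and \ref{P:heisenberg}; you make explicit the Whittaker-dimension counts the paper leaves to the reader (irreducible principal series, the non-generic lifts of $\pi_{\min}$, and the $a$-uniform $E_6$ statement for $\pi_0$).
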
 
\begin{proof} For $\pi$ not in the boundary of $\Theta(1_H)$, this follows from the game of ping-pong above. For $\pi$ in the statement of Lemma \ref{L:boundary}, $\Theta(\pi)$ is computed in Propositions 
\ref{P:theta_siegel} and \ref{P:heisenberg}.  
\end{proof}

  \section{Lifts of tempered representations} \label{S:parameters} 
 
 Here we determine $\Theta(\sigma)$ for tempered $\sigma$ using a ``tower'' of theta correspondences, starting with the correspondence in 
 $E_6$ (Proposition \ref{P:gl3_correspondence}) as the base case. To that end, let $\mathcal G_i$ denote one of the
 simply connected  Chevalley groups in the sequence 
 \[ 
  E_7 < E_8 
 \] 
 where $i=7,8$ denotes the rank of the group.  
 Let $\mathscr P_i=\mathscr M_i \mathscr N_i $ be the (standard) parabolic in $\mathcal G_i$ such that 
 \[ 
 [\mathscr M_7, \mathscr M_7] =[\mathscr M_8, \mathscr M_8]
 \] 
 is the simply connected $E_6$. 
 The center $\mathscr Z_i$ of $\mathscr M_i$ is a connected one- and two-dimensional torus, respectively. The quotient 
 \[ 
 \mathscr M_7/\mathscr Z_7 =\mathscr M_8/\mathscr Z_8
 \] 
 is the adjoint $E_6$. To distinguish different cases, we write $G_i$ for the usual group $G$.
 
We can arrange the dual pair $\SL_3 \times G_i$ in $\mathcal G_i$ so that 
$\SL_3$ is contained in the Levi $\mathscr M_i$. Let $P_i=M_iN_i$ be the centralizer of $\SL_3$ in $\mathscr P_i$.  It is a parabolic subgroup in $G_i$ such that $[M_i,M_i]=\SL_3\times \SL_3$. Let $Z_i=\mathscr Z_i$ be the connected component of the center of $M_i$. Then, we claim, 
\[ 
M_i/Z_i \cong \SL_3 \times_{\mu_3} \SL_3. 
\]
In order to check the above claim, and for further use, we realize the root system $E_8$ inside $\mathbb R^8$ in the standard fashion, with simple roots  
 \[ 
 \alpha_1=\frac12(e_1+e_8)-\frac12(e_2+e_3+e_4+e_5+e_6+e_7), ~\alpha_2=e_1+e_2, 
 \] 
 \[ 
  \alpha_3=e_2-e_1, ~\alpha_4=e_3-e_2, ~\alpha_5=e_4-e_3, ~\alpha_6=e_5-e_4, ~\alpha_7=e_6-e_5, ~\alpha_8=e_7-e_6. 
 \] 
 The following is the Dynkin diagram of $E_6$, with the roots belonging to the $\SL_3\times \SL_3$ Levi marked black. 
\begin{picture}(200,120)(-120,-15)

\put(79,73){\line(0,-1){30}}
\put(79,40){\circle{6}}

\put(74,29){$\alpha_2$}

\put(02,82){$\alpha_1$}

\put(38,82){$\alpha_3$}

\put(74,82){$\alpha_4$}

\put(110,82){$\alpha_5$}

\put(146,82){$\alpha_6$}


\put(07,76){\circle*{6}}
\put(10,76){\line(1,0){30}}

\put(43,76){\circle*{6}}
\put(46,76){\line(1,0){30}}
\put(79,76){\circle{6}}

\put(82,76){\line(1,0){30}}
\put(115,76){\circle*{6}}

\put(118,76){\line(1,0){30}}
\put(151,76){\circle*{6}}

\end{picture}

Note that $\SL_6$ and its Levi $M_7$ are also visible in the above diagram. The center of the Levi $M_8$ contains one additional one-parameter group corresponding to the coroot $\alpha^{\vee}_2$. Thus 
\[
M_7/Z_7 =M_8/Z_8
\]
are the same, so it suffices to prove the claim for $i=7$. 
The Levi $M_7$ in $G_7=\SL_6$ is $(\GL_3\times\GL_3)^{\det}$. 
The connected component of its center is 
a one-dimensional torus given by block-diagonal matrices $(x,x^{-1})$ where $x$ denotes a $3\times 3$ scalar matrix. 
The quotient is indeed as claimed.  

Now observe that $\SL_3\times M_i/Z_i$ is a dual pair in 
$\mathcal M_i/\mathcal Z_i$ (tht is, adjoint $E_6$) where the correspondence, especially for generic representations, has been completely described; see Proposition \ref{P:gl3_correspondence}. The lift of $\sigma$ is a tempered representation that we denote by $\Theta_6(\sigma)$ (essentially $\sigma\otimes\sigma$). We note that a tempered representation $\sigma$ of 
$\SL_3$ is Whittaker-generic, and that $\Theta_6(\sigma)$ is tempered. 

\begin{prop}\label{P:tempered}
Let $\sigma$ be a tempered representation of $\SL_3$. Then the lift of 
$\sigma$ to $G$ is, respectively, the Langlands quotient 
\[ 
\Theta_7(\sigma)= \bar X(A_2\times A_2, \Theta_6(\sigma), 1) 
\text{ and } 
\Theta_8(\sigma)= \bar X(A_2\times A_2, \Theta_6(\sigma), (1,1)).
\]
\end{prop}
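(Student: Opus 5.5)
We will compute $\Theta_i(\sigma)$ by exhibiting it as a quotient of a standard module attached to the parabolic $P_i=M_iN_i$, and then pinning it down with the irreducibility from Theorem \ref{thm_main1}. By Theorem \ref{thm_main1}, $\Theta_i(\sigma)$ is irreducible and unitary for tempered $\sigma$, since tempered representations are unitary and Whittaker-generic. It therefore suffices to find one nonzero $G_i$-map from $\Theta_i(\sigma)$ to a suitable standard module, or dually to show $\Theta_i(\sigma)$ is a quotient of the corresponding costandard module.

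The first step is to compute the Jacquet module of $\Pi$ along the unipotent radical $\mathscr N_i$ of $\mathscr P_i$; this contains $N_i$. By Propositions \ref{prop:msE6} and \ref{prop:msE8}, or by the general theory of Jacquet modules of minimal representations, the top quotient $\Pi_{\mathscr N_i}$ contains the minimal representation of the adjoint $E_6$ twisted by a character of $\mathscr Z_i$. Restricted to $\SL_3\times M_i/Z_i$, that minimal representation is the $E_6$ theta correspondence of Section \ref{S:3SL3}. Since $\Theta_6(\sigma)$ is its lift (Proposition \ref{P:gl3_correspondence}), we obtain a nonzero quotient map $r_{N_i}(\Pi)\to \sigma\otimes \Theta_6(\sigma)\otimes\chi_i$, where $\chi_i$ is the exponent of $Z_i$. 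Reading off exponents as in Sections \ref{S:Siegel} and \ref{S:heisenberg}, we expect $\chi_i$ to be exactly the positive character $\varpi_3$ (for $i=7$) and $s\varpi_4+t\varpi_2$ with $(s,t)=(1,1)$ (for $i=8$).

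The second step uses Frobenius reciprocity,
\[
\Hom_{\SL_3\times G_i}(\Pi,\sigma\otimes \Ind_{P_i}^{G_i}(\Theta_6(\sigma)\chi_i))\cong \Hom_{\SL_3\times M_i}(r_{P_i}(\Pi),\sigma\otimes\Theta_6(\sigma)\chi_i),
\]
to obtain a nonzero map $\Theta_i(\sigma)\to \Ind_{P_i}^{G_i}(\Theta_6(\sigma)\chi_i)$. With the sign of $\chi_i$ arranged negative (i.e.\ using $\bar P_i$, or with an appropriate choice of the parabolic), this induced module is the dual of the standard module $X(A_2\times A_2,\Theta_6(\sigma),\cdot)$. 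Its unique irreducible submodule is the Langlands quotient $\bar X$. Since $\Theta_i(\sigma)$ is irreducible, it must equal that submodule. This is consistent with the Steinberg case, already computed in Section \ref{S:noncusp}, which serves as a check on the normalization.

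The main obstacle is Step 1. The Jacquet module $r_{N_i}(\Pi)$ has a filtration whose lower pieces, given by functions on $\mathscr M_i$-orbits in $\mathscr N_i$ meeting the orthogonal complement of $\bar N_i$, could a priori carry the $\sigma$-isotypic part. One must show that the quotient $\sigma\otimes\Theta_6(\sigma)\chi_i$ genuinely survives; this is easiest when taken from the top piece, which is a quotient of $r_{N_i}(\Pi)$. One must also get the exponent $\chi_i$ right so that it lands in the positive chamber. I would first verify the exponent with the unramified case, where $\sigma$ is a full principal series and the lift is already known from Proposition \ref{P:theta_nc}. The identification of $\chi_i$ then transfers to all tempered $\sigma$, since the top piece is a tensor product of the $E_6$ minimal representation with a fixed character of $\mathscr Z_i$.
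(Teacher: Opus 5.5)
Your proposal is correct and follows the paper's argument. You take the quotient $\Pi_6\otimes\chi_i$ of the Jacquet module of $\Pi$ along $\mathscr P_i$, use Frobenius reciprocity and the irreducibility of $\Theta_i(\sigma)$ to embed $\Theta_i(\sigma)$ into $\Ind_{P_i}^{G_i}(\Theta_6(\sigma)\otimes\chi_i)$, and calibrate the $\sigma$-independent character $\chi_i$ from a known case. The paper does this calibration using only the Steinberg computation of Section \ref{S:noncusp}, while you propose unramified principal series via Proposition \ref{P:theta_nc}, with Steinberg as a check; either works. Your worry about the lower filtration pieces is unnecessary: the top piece is already a quotient of $r_{P_i}(\Pi)$, and a single nonzero map suffices.
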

\begin{proof}
Let $\Pi_i$ be the minimal representation of $\mathcal G_i$. Then the Jacquet functor of $\Pi_i$ with respect to $\mathscr P_i$ contains the minimal representation $\Pi_6$ of the adjoint $E_6$ as a summand.  Hence 
$r_{P_i}(\Pi_i)$ has a quotient 
\[ 
\Pi_6\otimes \chi_i 
\]
for some $\mathbb R^+$-valued character $\chi_i$ of $M_i$. By Frobenius reciprocity, and using the fact that $\Theta_i(\sigma)$ is irreducible, we have an inclusion 
\[ 
\Theta_i(\sigma)\subset \Ind_{P_i}^{G_i}(\Theta_6(\sigma) \otimes\chi_i). 
\]
 Computing $\chi_i$ is an exercise keeping track of several $\rho$-shifts. However, the answer is of course independent of $\sigma$, and we know it 
 for the Steinberg representation, whence the general answer follows. 
\end{proof} 

  \appendix
\section{$\SL_3$-orbits on $G_2$ flag varieties} \label{S:orbits} 

We use a realization of the split octonion algebra $\mathbb O$ over $F$ (\cite{KMRT}, p.~507) as a vector space of $2\times 2$ matrices 
\[ 
x=
\left(\begin{matrix} 
\alpha & a \\
b & \beta \\
\end{matrix}\right) 
\] 
where $\alpha ,\beta \in F$ and $a,b\in F^3$. We think of $F^3$ as the set of column matrices, so that $\SL_3(F)$ acts on $F^3$ by right multiplication. On $F^3$ we have the standard 
dot product given by matrix multiplication  $a\cdot b= a^{\top} b $ where $a^{\top}$ is the transpose of $a$.  The octonion multiplication is given by 
\[ 
\left(\begin{matrix} 
\alpha & a \\
b & \beta  \\
\end{matrix}\right) 
\left(\begin{matrix} 
\gamma & c \\
d & \delta  \\
\end{matrix}\right) = 
\left(\begin{matrix} 
\alpha\gamma + a\cdot d & \alpha c + \delta a - b\wedge d\\
\gamma b +\beta d + a\wedge c & \beta \delta+ b\cdot c \\
\end{matrix}\right). 
\] 
where $\wedge$ denotes the cross product. Conjugation is given by 
\[ 
\left(\begin{matrix} 
\alpha & a \\
b & \beta \\
\end{matrix}\right) 
\mapsto 
\left(\begin{matrix} 
\beta & -a \\
-b & \alpha \\
\end{matrix}\right) 
\] 
and the norm by 
\[ 
N \left(\begin{matrix} 
\alpha & a \\
b & \beta \\
\end{matrix}\right) = \alpha\beta -a\cdot b. 
\] 
Let $G$ be the group of automorphisms of $\mathbb O$. It is a split exceptional group of type $G_2$. Its flag variety can be realized as a set of certain flags in the 
7-dimensional representation on the space of traceless octonions $\mathbb O^0$.  More precisely, a nil-subalgebra $V$ of $\mathbb O$ is a subspace of trace-$0$ octonions such that $xy=0$ for any two $x,y\in V$.  The dimension of such a non-zero $V$ is one or two.  Indeed, a one-dimensional nil-subalgebra $V_1$ is spanned by $x$ such that $x^2=0$ 
(observe  that $x^2= -N(x)$ for traceless $x$) 
one checks that for such $x$ 
\[
V_3 := \Delta\langle x\rangle = \{ y\in \mathbb O ~|~xy=0\} 
\] 
is three-dimensional. Furthermore, $V_3$ is not a nil-subalgebra but any two-dimensional subspace $V_2$ containing $V_1$ is. 
Now, the two partial flag varieties $\mathcal P_1$ and $\mathcal P_2$ for $G$ are the sets of one- and two-dimensional nil-subalgebras. 
The flag variety is the set of pairs $V_1\subseteq V_2$ of one- and two-dimensional nil-subalgebras.  The corresponding Borel subalgebra stabilizes a full flag in $\mathbb O^0$ given by 
\[ 
V_1 \subset V_2 \subset V_3 \subset V_3^{\perp} \subset V_2^{\perp} \subset V_1^{\perp} 
\] 
where $V_3 = \Delta V_1$ and $V^{\perp}$ is the orthogonal complement of $V$ with respect to the norm form. 

We have an embedding of $\SL_3(F)$ into $G$ such that $g\in \SL_3(F)$ acts on off-diagonal terms $a$ and $b$ by the standard action on $a$ and by transpose-inverse on $b$. Now we want to describe $\SL_3(F)$-orbits on flag varieties for $G$. 
Consider the partial flag $\mathcal P_1$. Each one-dimensional nil-subalgebra is spanned by a non-zero isotropic element 
\[ 
\left(\begin{matrix} 
\gamma & a \\
b & -\gamma \\
\end{matrix}\right), 
\] 
that is, $-\gamma^2-a\cdot b=0$.  The following is an easy check using the explicit action of $\SL_3(F)$.  

\begin{prop}  The group $\SL_3(F)$ acts on $\mathcal P_1$ with the following orbits: 
\begin{itemize} 
\item 5-dimensional open orbit $\gamma\neq 0$;
\item 4-dimensional orbit $ a\cdot b=0$,  $a\neq 0$ and $b\neq 0$;
\item two closed 2-dimensional orbits $a=0$ or $b=0$.
\end{itemize}  
\end{prop}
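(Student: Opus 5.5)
The plan is to classify points of $\mathcal P_1$ by the invariants preserved by $\SL_3(F)$. A line in $\mathcal P_1$ is spanned by $x=\left(\begin{smallmatrix}\gamma & a\\ b & -\gamma\end{smallmatrix}\right)$ with $\gamma^2+a\cdot b=0$, $x\neq 0$. Since $g\in\SL_3(F)$ fixes $\gamma$ and acts on $(a,b)$ by the standard and contragredient actions, the pairing $a\cdot b$ is invariant. Hence the following conditions on the line are $\SL_3$-invariant: whether $\gamma=0$; whether $a=0$; whether $b=0$. Note that they are also invariant under rescaling $x$. These conditions give the stated four pieces. We also note that $\gamma=0$ forces $a\cdot b=0$, and $\gamma\neq0$ forces $a\cdot b=-\gamma^2\neq 0$, so $a,b\neq 0$. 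Moreover $a=b=0$ would force $\gamma=0$, which is excluded.

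The next step is transitivity on each piece.

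\emph{Open piece.} Scale so that $\gamma=1$; then $a\cdot b=-1$. The group $\SL_3(F)$ acts transitively on pairs $(a,b)\in F^3\times F^3$ with $a^{\top}b=-1$. Indeed, we may move $a$ to $e_1$, since $\SL_3$ is transitive on nonzero vectors. The stabilizer of $e_1$ then moves $b$, subject to $b_1=-1$, to $-e_1$ using unipotent elements.

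\emph{Piece $\gamma=0$ with $a,b\neq0$.} Here $a\cdot b=0$. We move $a$ to $e_1$. The stabilizer of $e_1$ (in the transpose-inverse action) acts on the vectors $b$ with $b_1=0$, $b\neq 0$, through its $\SL_2\ltimes F^2$ part, and this action is transitive, since $\SL_2$ is transitive on nonzero vectors of $F^2$. The scaling of the line is harmless.

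\emph{Pieces $a=0$ or $b=0$.} These are the projectivizations $\mathbb P(F^3)$ and $\mathbb P((F^3)^\ast)$, on which $\SL_3$ is transitive.

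Finally, I compute dimensions. The quadric $\mathcal P_1$ of isotropic lines in the $7$-dimensional space $\mathbb O^0$ has dimension $5$, so the open piece has dimension $5$. For the second piece, the cone $\{(a,b): a\cdot b=0\}$ has dimension $5$, and projectivizing gives $4$. The projective planes have dimension $2$. Closedness follows from the fact that these are the minimal strata: $a=0$ (with $x\neq0$) is a closed condition, and the closure of the $4$-dimensional orbit consists of itself together with the two planes.

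I expect no real obstacle. The only point requiring care is the transitivity on the $4$-dimensional piece, namely checking that the stabilizer of $a=e_1$ acts on $b$ through the contragredient action with enough freedom, and that all statements hold over $F$ rather than $\bar F$. Since these orbits have no nontrivial Galois cohomology obstructions (the stabilizers are connected with unipotent radicals and $\SL$ factors), the orbit decomposition over $F$ agrees with the one over $\bar F$.
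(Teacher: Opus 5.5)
Your proposal is correct and follows the same approach as the paper: the paper only says the orbit decomposition is ``an easy check using the explicit action of $\SL_3(F)$,'' and your argument supplies that check, using the invariants $\gamma$, $a\cdot b$ and the vanishing of $a$ or $b$, explicit transitivity on each piece, and the dimension counts. The closing Galois-cohomology remark is unnecessary, since you have already proved transitivity over $F$ directly.
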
 

\begin{prop}  Let $\mathcal B$ be the full flag variety for $G$. Let $\pi_1: \mathcal B \rightarrow \mathcal P_1$ be the natural projection. 
Let $O \subset \mathcal P_1$ be an $\SL_3(F)$-orbit. Then $\pi_1^{-1}(O)$ is a unique $\SL_3(F)$-orbit, except when $O$ is 4-dimensional. 
Then $\pi_1^{-1}(O)$ is a union of two orbits, one 5-dimensional, the other 4-dimensional. 
\end{prop}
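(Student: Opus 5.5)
The plan is to study the fibers of $\pi_1$ directly. For a point $V_1=\langle x\rangle \in O$, the fiber $\pi_1^{-1}(V_1)$ is the set of two-dimensional nil-subalgebras $V_2$ with $V_1\subset V_2\subset V_3=\Delta\langle x\rangle$. These are the lines in the two-dimensional space $V_3/V_1$, so the fiber is a projective line $\mathbb P^1$. The $\SL_3(F)$-orbits in $\pi_1^{-1}(O)$ correspond bijectively to the orbits of the stabilizer $S_x=\mathrm{Stab}_{\SL_3}(V_1)$ on this $\mathbb P^1$. For each of the four orbit types of the previous proposition, I will choose a convenient representative $x$, compute $V_3$ explicitly from the multiplication formula, and then determine how $S_x$ acts on $\mathbb P(V_3/V_1)$.

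\textbf{Open orbit.} Take $x=\mathrm{diag}$-type element $\left(\begin{smallmatrix}1 & a\\ b & -1\end{smallmatrix}\right)$ with $a\cdot b=-1$, say $a=e_1$, $b=-e_1$. The stabilizer of the line has dimension $8-5=3$. I expect it to contain a copy of $\SL_2$ acting on the $e_2,e_3$ coordinates. From the formula for $xy$, the space $V_3/V_1$ should be identified with a $2$-dimensional space on which this $\SL_2$ acts through its standard representation, so the action on $\mathbb P^1$ is transitive.

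\textbf{Closed orbits.} Take $x=\left(\begin{smallmatrix}0 & e_1\\ 0 & 0\end{smallmatrix}\right)$; the case $b=0$ is symmetric. Then $V_3$ consists of the elements $y$ with $xy=0$, which I expect to be spanned by $x$ and by $\left(\begin{smallmatrix}0 & e_j\\ 0&0\end{smallmatrix}\right)$ or by $b$-entries chosen so that the $\wedge$-terms vanish. The stabilizer is a maximal parabolic of $\SL_3$, whose Levi $\GL_2$ acts on the relevant $2$-dimensional quotient linearly, hence transitively on $\mathbb P^1$. This gives a single orbit, of dimension $2+1=3$.

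\textbf{4-dimensional orbit.} Take $x=\left(\begin{smallmatrix}0 & e_1\\ e_2 & 0\end{smallmatrix}\right)$, so that $a\cdot b=0$. The stabilizer has dimension $4$. Computing $V_3$ via the formula, I expect it to be spanned by $x$, by $\left(\begin{smallmatrix}0&e_1\\0&0\end{smallmatrix}\right)$ (or a relative of it), and by $\left(\begin{smallmatrix}0&0\\e_2&0\end{smallmatrix}\right)$. These give two distinguished lines in $V_3/V_1$, but the stabilizer mixes them through a unipotent element, e.g. one acting by $e_1\mapsto e_1+te_3$. So the $S_x$-action on $\mathbb P^1$ should have exactly one fixed point, namely a distinguished line $\ell_0$, while its unipotent part acts transitively on the complement $\mathbb P^1\smallsetminus\{\ell_0\}$. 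This yields two orbits, of dimensions $4$ and $5$.

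The main obstacle is this last case. There I must correctly identify $V_3$ and show that the stabilizer acts on $\mathbb P^1$ with exactly one fixed point, rather than trivially or with two fixed points. To settle it, I will compute the Lie algebra of $S_x$ inside $\mathfrak{sl}_3$ and apply it to a basis of $V_3/V_1$. As a consistency check for each case, I will use the dimension count: the sum of orbit dimensions must match the dimension of the fibration.
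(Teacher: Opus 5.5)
Your strategy is the same as the paper's. You identify the fiber over $\langle x\rangle$ with $\mathbb P(V_3/V_1)$ and count orbits of the stabilizer of $\langle x\rangle$ on it. The open and closed orbits are handled by an $\SL_2$ or Levi $\GL_2$ acting linearly on $V_3/V_1$, and the $4$-dimensional case is computed explicitly.

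The open case checks out: for $a=e_1$, $b=-e_1$ one gets $V_3/V_1\cong\{(c_2,c_3)\}$, with the stabilizer $\SL_2$ acting by its standard representation. In the closed case your first guess is wrong: $\left(\begin{smallmatrix}0&e_j\\0&0\end{smallmatrix}\right)\notin V_3$ for $j\neq 1$, since $e_1\wedge e_j\neq 0$. Your alternative is the right one: $V_3/V_1$ is spanned by $\left(\begin{smallmatrix}0&0\\e_j&0\end{smallmatrix}\right)$, $j=2,3$, and the Levi acts there by $h^{-\top}$, which is still transitive on $\mathbb P^1$.

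The $4$-dimensional case is the only one with real content, and your sketch of it does not work as written. Since $x=\left(\begin{smallmatrix}0&e_1\\0&0\end{smallmatrix}\right)+\left(\begin{smallmatrix}0&0\\e_2&0\end{smallmatrix}\right)$, your three proposed spanning vectors span only a plane. Your ``two distinguished lines'' are therefore the same point of $\mathbb P(V_3/V_1)$. Also, $e_1\mapsto e_1+te_3$ does not stabilize $\langle x\rangle$, because it moves $a=e_1$ off its line.

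Solving $xy=0$ for $y=\left(\begin{smallmatrix}\gamma&c\\d&-\gamma\end{smallmatrix}\right)$ gives $c=(c_1,0,\gamma)$ and $d=(0,d_2,-\gamma)$. So the missing direction of $V_3$ has $\gamma\neq 0$. The stabilizer of $\langle x\rangle$ consists of those $g\in\SL_3$ with first column $(\lambda,0,0)^{\top}$, second row $(0,\lambda^{-1},0)$ and $g_{33}=1$. In the coordinates $(\gamma,\,c_1-d_2)$ on $V_3/V_1$ it acts by
\[
\gamma\mapsto\gamma,\qquad c_1-d_2\mapsto\lambda(c_1-d_2)+(g_{13}-\lambda g_{32})\gamma .
\]
Hence the unique fixed point is $\gamma=0$, i.e.\ $V_2=\{\left(\begin{smallmatrix}0&\alpha e_1\\ \beta e_2&0\end{smallmatrix}\right)\}$, a point of $O_c$. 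The unipotent entries $g_{13}$ (that is, $e_3\mapsto e_3+te_1$) and $g_{32}$ act by translations on the affine line $\gamma\neq 0$. This gives the two orbits, of dimensions $4$ and $5$, and is exactly the paper's computation, done there with $b=k$.

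Your proposed dimension check should also be replaced. Orbit dimensions do not add up to anything. The correct check is that the largest orbit in $\pi_1^{-1}(O)$ has dimension $\dim O+1$.
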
 
\begin{proof} 
Observe that the fiber of $\pi_1$ at $V_1 \in O$ is the projective line $\mathbb P(V_3/V_1)$ where $V_3=\Delta V_1$.  The stabilizer in 
$\SL_3(F)$ of the line $V_1$ acts naturally on this projective line, and the number of orbits in $\pi_1^{-1}(O)$ is equal to the number of orbits of the stabilizer on the projective line. If the stabilizer contains an $SL_2(F)$ then $\pi_1^{-1}(O)$ is one $\SL_3(F)$ orbits. This happens for all orbits except the $4$-dimensional, in which case we need the following
computation. 

Let $i,j,k$ be the usual orthonormal basis of $F^3$ so that the cross product takes the form $i\wedge j=k$, $k\wedge i=j$ and $j\wedge k =i$. 
Assume that $V_1$ is spanned by 
\[ 
\left(\begin{matrix} 
0 & i \\
k & 0\\
\end{matrix}\right).
\] 
Next, we need to compute $V_3=\Delta V_1$. If 
\[ 
\left(\begin{matrix} 
0 & i \\
k & 0\\
\end{matrix}\right) 
\left(\begin{matrix} 
\gamma & c \\
d & -\gamma  \\
\end{matrix}\right) = 
\left(\begin{matrix} 
 i\cdot d & -\gamma i - k\wedge d\\
\gamma k + i\wedge c &  k\cdot c \\
\end{matrix}\right) =0,
\]
then $c= (\alpha, -\gamma, 0)$  and $d= (0, \gamma, \beta) $.  Thus $V_3$ is an explicit vector space, and it is easy to see that the stabilizer of $V_1$ in 
$\SL_3(F)$ acts with two orbits on $\mathbb P(V_3/V_1)$, fixing $V_2/V_1$, where $V_2\subset V_3$ is given by $\gamma=0$. 
\end{proof} 

\begin{prop}\label{P:orbtis_3}  The group $\SL_3(F)$ acts on $\mathcal P_2$ with two orbits. The unique closed orbit $O_c$ consists of  
\[ 
V_2= \left\{ \left(\begin{matrix} 
0 & \alpha a  \\
\beta b & 0\\
\end{matrix}\right) ~|~ \alpha, \beta \in F \right\}
\] 
where $a,b\in F^3$ are non-zero such that $a\cdot b=0$.  This orbit is a flag variety for $\SL_3(F)$. 
\end{prop}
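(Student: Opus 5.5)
The plan is to classify two-dimensional nil-subalgebras $V_2\subset\mathbb O^0$ directly, using the description of $\mathcal P_1$-orbits and the fibration structure from the two preceding propositions. Every $V_2\in\mathcal P_2$ contains a pencil of one-dimensional nil-subalgebras $V_1\subset V_2$, namely the points of the projective line $\mathbb P(V_2)$. The incidence variety $\mathcal B$ maps onto both $\mathcal P_1$ and $\mathcal P_2$, and $\SL_3(F)$-orbits on $\mathcal P_2$ correspond to $\SL_3(F)$-orbits of lines $\mathbb P(V_2)\subset\mathcal P_1$. I would therefore first find which $V_2$ have all their points inside the closed orbits of $\mathcal P_1$, and then show that every other $V_2$ lies in a single orbit.

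\emph{Step 1: the closed orbit.} Suppose $V_2$ has the displayed form. For $x=\left(\begin{smallmatrix}0&\alpha a\\ \beta b&0\end{smallmatrix}\right)$ and $y$ of the same form, the product formula gives diagonal entries $\alpha\beta'\,a\cdot b=0$ and $\beta\alpha'\,b\cdot a=0$. The off-diagonal entries are $-\beta b\wedge\beta' b=0$ and $\alpha a\wedge\alpha' a=0$. So $xy=0$ and $V_2$ is a nil-subalgebra.

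The set of pairs of lines $(\langle a\rangle,\langle b\rangle)$ with $a\cdot b=0$ is the set of incident point–plane pairs in $\mathbb P^2$, since $b$ corresponds to the plane $b^\perp$ through the action of $b$ by transpose-inverse. This set is the full flag variety of $\SL_3$, on which $\SL_3(F)$ acts transitively with stabilizer a Borel subgroup. It is 3-dimensional and projective, hence closed.

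\emph{Step 2: every other $V_2$ forms one orbit.} Suppose $V_2$ is not of this form. I claim $\mathbb P(V_2)$ meets the open 5-dimensional orbit $\gamma\neq0$ of $\mathcal P_1$. Indeed, if every element had $\gamma=0$, then each element $\left(\begin{smallmatrix}0&a\\ b&0\end{smallmatrix}\right)$ would satisfy $a\cdot b=0$. The nil condition $xy=0$ between two such elements forces $a\wedge a'=0$, $b\wedge b'=0$, $a\cdot b'=0$ and $a'\cdot b=0$. Hence all the $a$'s are proportional, all the $b$'s are proportional, and $V_2$ is of the form in Step 1.

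So we may pick $V_1\subset V_2$ with $\gamma\neq0$. By the previous proposition, $\pi_1^{-1}$ of the open orbit is a single $\SL_3(F)$-orbit in $\mathcal B$. Hence all pairs $(V_1,V_2)$ with $V_1$ in the open orbit are conjugate, and all such $V_2$ lie in one $\SL_3(F)$-orbit.

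\emph{Step 3: dimension count.} The complement of $O_c$ is a single orbit. Since $\dim\mathcal P_2=5>3$, this orbit is open, $O_c$ is the unique closed orbit, and there are exactly two orbits.

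The main obstacle is the verification in Step 2, because the nil conditions must be extracted carefully from the multiplication formula. That step needs both the diagonal entries ($a\cdot d$, $b\cdot c$) and the cross-product entries. I would first try the computation on the explicit basis $i,j,k$ used in the previous proof, and then argue by $\SL_3$-equivariance.
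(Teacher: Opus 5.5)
Your proof is correct, but it is organized differently from the paper's.

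The paper works with the five $\SL_3(F)$-orbits on $\mathcal B$ supplied by the two preceding propositions. It first shows that three of them lie over $O_c$, using the three inequivalent choices $(\alpha,\beta)=(1,0),(0,1),(1,1)$ of a line $V_1\subset V_2$ for $V_2\in O_c$. It then exhibits the explicit $V_2=\langle x,x'\rangle$, where $\langle x\rangle$ lies in the open orbit of $\mathcal P_1$ and $\langle x'\rangle$ in the $4$-dimensional one. This shows that the two largest orbits of $\mathcal B$ have the same image, and counting gives exactly two orbits on $\mathcal P_2$.

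You instead prove the intrinsic characterization $O_c=\{V_2 : \gamma|_{V_2}\equiv 0\}$ directly from the multiplication table. Hence every $V_2\notin O_c$ contains a line from the open $\mathcal P_1$-orbit. You then need only the transitivity of $\SL_3(F)$ on $\pi_1^{-1}$ of that open orbit, which is the easy case of the preceding proposition (the stabilizer contains $\SL_2(F)$).

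What your route buys is that it avoids the finer analysis of the fibre over the $4$-dimensional orbit and the explicit example, and it makes the closedness of $O_c$ transparent. The paper's bookkeeping gives the finer information of how each of the five orbits of $\mathcal B$ projects to $\mathcal P_2$.

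Two small tightenings. In Step 2, say explicitly that $x\mapsto(a,b)$ is injective on $V_2$ when $\gamma\equiv0$. Then $\dim V_2=2$ forces both lines $\langle a_0\rangle,\langle b_0\rangle$ to be nonzero and $V_2=\{(\alpha a_0,\beta b_0)\}$, with $a_0\cdot b_0=0$ coming from the cross condition $a\cdot b'=0$. Also, the complement of $O_c$ is nonempty without any dimension theory: any $V_1$ with $\gamma\neq0$ lies in some two-dimensional $V_2\subset\Delta V_1$.
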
 
\begin{proof} It is clear that $O_c$ is a closed $\SL_3(F)$-orbit isomorphic to the flag variety for $\SL_3(F)$. 
Let $\pi_2: \mathcal B \rightarrow \mathcal P_2$  be the natural projection. We need to understand how five $\SL_3(F)$-orbits in $\mathcal B$ are mapped. 
Let $V_2$ in $O_c$. We can complete $V_2$ to a full flag by taking a line $V_1\subset V_2$. Three choices 
\[ 
(\alpha,\beta) = (1,0), (0,1 ) \text { and }  (1,1) 
\] 
give flags belonging to different $\SL_3$-orbits, since the three $V_1$ are not $\SL_3(F)$-conjugate.  Thus $\pi_2^{-1}(O_c)$ consists of three orbits.  Now we pick a $V_2$ 
outside $O_s$. To that end, let 
\[ 
x= 
\left(\begin{matrix} 
1 & -j\\
j &  -1\\
\end{matrix}\right). 
\] 
Then $\Delta\langle x \rangle$ consists of matrices 
\[ 
\left(\begin{matrix} 
\gamma & c \\
d & -\gamma \\
\end{matrix}\right), 
\] 
where $d=(\alpha,\gamma,\beta )$ and $c=(\beta,-\gamma, -\alpha)$.  Let 
\[ 
x'=
\left(\begin{matrix} 
0 & i \\
k &  0\\
\end{matrix}\right) \in \Delta\langle x \rangle
\] 
and  $V_2=\langle x,x'\rangle$. Clearly $V_2$ is not in $O_c$. Furthermore, the lines $\langle x \rangle$ and $\langle x' \rangle$ are not $\SL_3(F)$-conjugate. Thus the largest two 
$\SL_3(F)$-orbits in $\mathcal B$ map to the same orbit in $\mathcal P_2$. 
\end{proof}  

We shall need stabilizers in $\SL_3(F)$ of points in the two orbits in $\mathcal P_2$.   In the closed orbit we take 
\[ 
V_2= \left\{ \left(\begin{matrix} 
0 & \alpha i  \\
\beta k & 0\\
\end{matrix}\right) ~|~ \alpha, \beta \in F \right\}. 
\] 
The stabilizer is the Borel subgroup $B=TU$ of upper-triangular matrices in $\SL_3$.  In the open orbit we take 
\[ 
V'_2= \left\{ \left(\begin{matrix} 
1 & \alpha i -\beta j \\
\beta j + \alpha k& 1\\
\end{matrix}\right) ~|~ \alpha, \beta \in F \right\}. 
\] 
The stabilizer here is a subgroup $B_0=T_0U_0$ of $B$ consisting of matrices 
\[ 
 \left(\begin{matrix} 
a & x & z  \\
0 & 1 & -x \\
0 & 0  & a^{-1} \\ 
\end{matrix}\right).
\] 

\section{Mini theta} 
\label{S:mini}
We describe the theta correspondence between similitude groups $\GL_2$ and $\almostGSpin_{2n}$ for $n$ odd.

\subsection{The group $\almostGSpin_{2n}$}
The group $\almostGSpin_{2n}$ we are interested in is obtained by gluing a copy of $\mathbb{G}_m$ to $\Spin_{2n}$ along the center $\mu_4$. To describe it, we need the root datum of $\Spin_{2n}$, for $n$ odd.

Let $T_0$ denote a maximal torus in $\Spin_{2n}$. The set of roots can be realized as $\Phi = \{\pm e_i \pm e_j: i\neq j\}$. Let $
R = \text{span}\ \Phi$
be the root lattice. The usual choice of simple roots is $\Delta = \{\alpha_1, \dotsc, \alpha_n\}$, where
\[
\alpha_i = e_i-e_{i+1} \text{ for }1\leq i\leq n-1,\quad \text{and }\quad \alpha_{n}=e_{n-1}+e_n.
\]
We let $P$ denote the weight lattice. The fundamental weights are
\[
\varpi_i = e_1 + \dotsb + e_i \text{ for }1\leq i\leq n-2,\quad \varpi_{n-1} = (\frac 12, \dotsc, \frac 12, -\frac 12), \quad \varpi_{n} = (\frac 12, \dotsc, \frac 12, \frac 12).
\]
Since $\Spin_{2n}$ is semisimple, the character lattice is equal to the weight lattice: $X^*(T_0)=P$ (dually, the cocharacters are $X_*(T_0)=R^\vee$). The characters of the center can thus be identified with $P/R$. One checks immediately that 
\[
\varpi_{n-1}\equiv -\varpi_n, \quad \varpi_1 \equiv \varpi_3 \equiv \dots \equiv \varpi_{n-2}=2\varpi_{n}, \text{ and } \quad \varpi_2 \equiv \varpi_4 \equiv \dots \equiv \varpi_{n-3}=0 \pmod R.
\]
Thus $P/R$ is (non-canonically) isomorphic to $\mathbb{Z}/4\mathbb{Z}$. We fix the isomorphism which sends $\varpi_n \pmod R$ to $1$ and denote by $[p]$ the image of $p\in P$ in $\mathbb{Z}/4\mathbb{Z}$.

We now consider the reductive group $\almostGSpin_{2n}$, which we define as the quotient
\[
\mathbb{G}_m \times_{\mu_4} \Spin_{2n},
\]
where $\mu_4$ is embedded anti-diagonally: $z \mapsto (z,z^{-1})$. We let $T=\mathbb{G}_m \times_{\mu_4} T_0,$ denote the corresponding torus. We identify the character lattice of the torus $\mathbb{G}_m \times T_0$ with $\mathbb{Z} \times P$. Those characters that are trivial on $\mu_4$ descend to $T$. A character $(a,p) \in \mathbb{Z} \times P$ takes $(z,z^{-1})$ to $z^{a-[p]}$. We thus get the following description of the character lattice:
\[
X^*(T) = \{(a,p) \in \mathbb{Z} \times P: a \equiv [p] \pmod 4\}.
\]
Dually, the cocharacter lattice $X_*(T)$ contains $\mathbb{Z} \times R^\vee$ as an index four subgroup. Modulo $\mathbb{Z} \times R^\vee$, it is generated by $((-1)^{\frac{n+1}{2}}/4, \varpi_{n}^\vee)$.

\subsection{Levi subgroups}
We want to describe two Levi subgroups of $\almostGSpin_{2n}$: the one corresponding to $\alpha_1$, and the one corresponding to the branch node $\alpha_{n-2}$.

\subsubsection{The branch Levi} Let $P_b=M_bN_b$ denote the parabolic corresponding to the branch node, $\alpha_{n-2}$.

To describe $M_b$, we use the (half-spin) representation $V^+$ of $\almostGSpin_{2n}$ whose highest weight is $(1,\varpi_n)$. The weights of $V^+$ are $(1,p)$, where 
\[
p=(\pm \frac{1}{2},\dotsc,  \pm \frac{1}{2})
\]
with an \textbf{even} number of negative signs. The derived subgroup of $M_b$ is $\SL_{n-2} \times \SL_2 \times \SL_2$. The $\SL_{n-2}$ corresponds to roots $\alpha_1, \dotsc, \alpha_{n-3}$; to distinguish the two copies of $\SL_2$, we denote them $\SL_2^-$ (corresponding to $\alpha_{n-1}$) and $\SL_2^+$ (corresponding to $\alpha_{n}$). 

\begin{lemma}$M_b/\SL_3 \cong \GL_2 \times \GL_2$.
\end{lemma}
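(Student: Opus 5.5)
The plan is to realize the quotient map explicitly, through the half-spin representation $V^+$ of highest weight $(1,\varpi_n)$, in the case at hand $n=5$ (so that $[M_b,M_b]=\SL_3\times\SL_2^-\times\SL_2^+$). Grade $V^+$ by the cocharacter $\varpi_{n-2}^\vee$, that is, by the value of $e_1+e_2+e_3$ on a weight $(1,p)$; this grading is preserved by $M_b$.

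\emph{The two extreme pieces.} Consider the top graded piece $W^+$, consisting of the weights with $p_1=p_2=p_3=\tfrac12$. Since the number of negative signs is even, the last two coordinates are $(+,+)$ or $(-,-)$. Thus $W^+$ is $2$-dimensional, and $\alpha_5=e_4+e_5$ connects its two weights, so $\SL_2^+$ acts on it by the standard representation. Likewise the bottom piece $W^-$, with $p_1=p_2=p_3=-\tfrac12$, has last two coordinates $(+,-)$ or $(-,+)$. It is $2$-dimensional, and $\SL_2^-$ (root $e_4-e_5$) acts on it by the standard representation.

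Both pieces are $M_b$-stable, since they are extreme eigenspaces of a central cocharacter of $M_b$. On each of them $\SL_3$ acts trivially, because its roots $e_1-e_2,\,e_2-e_3$ vanish on these weights. This gives a homomorphism
\[
\phi\colon M_b\longrightarrow \GL(W^+)\times\GL(W^-)\cong \GL_2\times\GL_2
\]
whose kernel contains $\SL_3$.

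\emph{Dimension check and identification of the kernel.} The torus $T$ has rank $6$ and $M_b$ has semisimple rank $4$, so
\[
\dim M_b = 8+3+3+2 = 16,
\]
and $\dim M_b/\SL_3 = 8=\dim(\GL_2\times\GL_2)$. It therefore suffices to show two things: $\phi$ is surjective, and $\ker\phi=\SL_3$ scheme-theoretically.

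The kernel is a normal subgroup of the Levi $M_b$. The image of the derived group $\SL_3\times\SL_2^-\times\SL_2^+$ already contains $\SL_2\times\SL_2$, and $\SL_2^\pm$ acts faithfully on $W^\pm$. Hence everything reduces to the torus: one needs $\ker(\phi|_T)=T\cap\SL_3$, and one needs $\phi|_T$ to surject onto the diagonal torus of $\GL_2\times\GL_2$. Both statements are equivalent to the single lattice statement that the map
\[
X_*(T)\longrightarrow\mathbb Z^4,\qquad \lambda\mapsto\bigl(\langle\lambda,\mu\rangle\bigr)_{\mu},
\]
where $\mu$ runs over the four weights of $W^+\oplus W^-$, is surjective with kernel exactly $\mathbb Z\alpha_1^\vee\oplus\mathbb Z\alpha_2^\vee$.

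\emph{The lattice computation (expected main obstacle).} I would do this using the description $X_*(T)=\mathbb Z\times R^\vee+\mathbb Z\bigl((-1)^{\frac{n+1}2}/4,\varpi_n^\vee\bigr)$ from the previous subsection. First compute the pairings of the basis elements $(1,0)$, $(0,\alpha_i^\vee)$ and the extra generator with the four weights
\[
\bigl(1,\tfrac12,\tfrac12,\tfrac12,\pm\tfrac12,\pm\tfrac12\bigr),\qquad \bigl(1,-\tfrac12,-\tfrac12,-\tfrac12,\pm\tfrac12,\mp\tfrac12\bigr).
\]
Then check that the resulting $4\times 7$ integer matrix has elementary divisors all equal to $1$. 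It is exactly the fractional generator that should supply the missing unimodularity, so this is where care is needed.

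Once this is verified:
\begin{itemize}
\item the kernel of $\phi$ on $T$ is the torus of $\SL_3$, which is connected and has no extra finite part;
\item the kernel of $\phi$ on the full group is generated by this torus together with the root groups of $\alpha_1,\alpha_2$, i.e.\ it is $\SL_3$;
\item $\phi$ is surjective by the dimension count.
\end{itemize}
This gives $M_b/\SL_3\cong\GL_2\times\GL_2$.
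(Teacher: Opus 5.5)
Your proposal is correct and is essentially the paper's argument: the paper also maps $M_b$ to $\GL_2\times\GL_2$ through its action on the $\SL_3$-invariants $V_2^-\oplus V_2^+$ of the half-spin module $V^+$ (your $W^-\oplus W^+$), and relies on the fractional cocharacter $((-1)^{\frac{n+1}{2}}/4,\varpi_n^\vee)$ to remove the index-$4$ obstruction. The only difference is bookkeeping. The paper writes $M_b=[M_b,M_b]\cdot S$, with $S$ the torus generated by $\alpha_{n-2}^\vee$ and that cocharacter, and checks through determinants that $S\to\GL_1\times\GL_1$ is bijective. Your check on the full lattice does come out unimodular, with kernel $\mathbb Z\alpha_1^\vee\oplus\mathbb Z\alpha_2^\vee$, and it has the advantage of making the identification of the kernel explicit.
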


\begin{proof}
We would like to compute the $\SL_{n-2}$ invariants of $V$. This amounts to finding the weights of $V^+$ orthogonal to roots $\alpha_1, \dotsc, \alpha_{n-3}$. We get weights of the form $(1,p)$, with $p$ one of the four possibilities (writing $\pm$ in place of $\pm 1/2$):
\[
\lambda^-=(-, \dotsc, -, +,-), \mu^-=(-, \dotsc, -, -,+), \lambda^+=(+, \dotsc,+, +,+), \mu^+=(+, \dotsc,+, -,-).
\]
This already shows that $(V^+)^{\SL_3}=V_2^- \oplus V_2^+$, where $V_2^\epsilon$ is the standard representation of $\SL_2^\epsilon$, with weights $(1,\lambda^\epsilon), (1,\mu^\epsilon)$ for $\epsilon \in\{\pm\}$.

An element of $M_b$ can be uniquely decomposed as a product of elements of $[M_b,M_b]$ and one-parameter groups corresponding to cocharacters $\alpha_{n-2}^\vee$ and $((-1)^{\frac{n+1}{2}}/4, \varpi_n^\vee)$. We have
\begin{alignat*}{2}
\langle(0, \alpha_{n-2}^\vee),(1,\lambda^-)\rangle &= -1, &\quad  \langle(0, \alpha_{n-2}^\vee),(1,\mu^-)\rangle &= 0,\\
\langle(0, \alpha_{n-2}^\vee),(1,\lambda^+)\rangle &= 0, &\quad \langle(0, \alpha_{n-2}^\vee),(1,\mu^+)\rangle &= 1.
\end{alignat*}
and
\begin{alignat*}{2}
\langle((-1)^{\frac{n+1}{2}}/4, \varpi_n^\vee),(1,\lambda^-)\rangle &= \frac{2+(-1)^{\frac{n+1}{2}}-n}{4}, &\quad  \langle((-1)^{\frac{n+1}{2}}/4, \varpi_n^\vee),(1,\mu^-)\rangle &=   \frac{2+(-1)^{\frac{n+1}{2}}-n}{4},\\
\langle((-1)^{\frac{n+1}{2}}/4, \varpi_n^\vee),(1,\lambda^+)\rangle &=  \frac{n+(-1)^{\frac{n+1}{2}}}{4}, &\quad \langle((-1)^{\frac{n+1}{2}}/4, \varpi_n^\vee),(1,\mu^+)\rangle &= \frac{n-4+(-1)^{\frac{n+1}{2}}}{4}.
\end{alignat*}
Thus the one-parameter subgroup generated by $\alpha_{n-2}^\vee(t)$ acts on $V_2^- \oplus V_2^+$ as
\[
\begin{pmatrix}
t^{-1} & 0\\
0 & 1
\end{pmatrix} \times \begin{pmatrix}
1 & 0\\
0 & t
\end{pmatrix}.
\]
Letting $z=\dfrac{2+(-1)^{\frac{n+1}{2}}-n}{4}\in \mathbb{Z}$ to simplify notation, $((-1)^{\frac{n+1}{2}}/4, \varpi_n^\vee)(u)$ acts by
\[
\begin{pmatrix}
u^z & 0\\
0 & u^z
\end{pmatrix} \times \begin{pmatrix}
u^{z+\frac{n-1}{2}} & 0\\
0 & u^{z+\frac{n-3}{2}}
\end{pmatrix}.
\]
The product of these two elements is
\[
\begin{pmatrix}
t^{-1}u^z & 0\\
0 & u^z
\end{pmatrix} \times \begin{pmatrix}
u^{z+\frac{n-1}{2}} & 0\\
0 & tu^{z+\frac{n-3}{2}}
\end{pmatrix}.
\]
The determinants of these two matrices are $t^{-1}u^{2z}$ and $tu^{2z+n-2}$.

It remains to see that the system $t^{-1}u^{2z}=a$ and $tu^{2z+n-2}=b$ has a unique solution for any choice of $a,b$. Multiplying the two equations, we get $u^{4z+n-2}=ab$. But $4z+n-2=(-1)^\frac{n+1}{2}$, so this equation is really $u^{\pm 1}=ab$, which certainly has a unique solution, after which $t=u^{2z}/a$ is also unique.
\end{proof}

\subsubsection{The $\alpha_1$ Levi}

Let $P_1=M_1N_1$ denote the parabolic corresponding to $\alpha_1$. The characters of $M_1$ are those elements of $X^*(T)$ vanishing on the coroots $\alpha_2,\dotsc, \alpha_n$.
If $p=\sum_{i=1}^n c_i\varpi_i$ is an element of $P$, then
$\langle p, \alpha_i^\vee\rangle= c_i$. Thus $(a,p) \in X^*(T)$ vanishes on $\alpha_2,\dotsc, \alpha_n$ if $c_2,\dotsc,c_n=0$. Recall moreover that $[\varpi_1] = 2[\varpi_n]$. This means we can identify the characters of $M_1$ as
\[
X^*(M_1)= \{(a,c\varpi_1):  a\equiv 2c \pmod 4 \}.
\]
In other words, we have
\[
X^*(M_1) = \mathbb{Z}(2,\varpi_1) \oplus \mathbb{Z}(2,-\varpi_1).
\]
These two generators of $X^*(M_1)$ are precisely the two characters one obtains from decomposing the ($2n$-dimensional) standard representation of $\almostGSpin_{2n}$ (with highest weight $(2,\varpi_1)$) as $2n=1+(2n-2)+1$.
\subsection{The correspondence}
With the above description of Levi subgroups, we are in position to describe the lifts of representations from $\GL_2$.

\begin{prop} Let $\pi$ be an irreducible representation of $\GL_2$.
\begin{enumerate}
\item If $\pi = \rho$ is supercuspidal, then $\Theta(\rho)=\Ind_{P_b}^G(\rho\nu^{1/2} \boxtimes \rho\nu^{-1/2})$. Here $\rho \boxtimes \rho$ is a representation of $\GL_2 \times \GL_2$ inflated to a representation of $M_b$.
\item If $\pi = \chi_1 \times \chi_2$ is a principal series, then $\Theta(\chi_1 \times \chi_2)=\Ind_{P_1}^G(\chi_1 \otimes \chi_2)$. Here $\chi_1 \otimes \chi_2$ is the character of $M_1$ given by $\chi_1 \circ (2,\varpi_1) \otimes \chi_1 \circ (2,-\varpi_1)$. The representation $\Ind_{P_1}^G(\chi_1 \otimes \chi_2)$ reduces in two cases:
\begin{enumerate}
\item $\chi_1/\chi_2=|\cdot|^{\pm 1}$. The two irreducible constituents of $\Ind_{P_1}^G(\chi_1 \otimes \chi_2)$ are the lifts of the respective constituents of $\chi_1 \times \chi_2$: the (twisted) trivial and Steinberg representations of $\GL_2$.
\item $\chi_1/\chi_2 =|\cdot|^{\pm (n-1)}$. Here $\chi_1 \times \chi_2$ is irreducible. The full theta lift $\Ind_{P_1}^G(\chi_1 \otimes \chi_2)$ is of length two and contains the (twisted) trivial representation.
\end{enumerate}
\end{enumerate}
\begin{proof}
This follows from \cite[Lemma 4.3]{HS25}, which details the correspondence between $\SL_2$ and $\Spin_{2n}$; the key idea is to use the Fourier-Jacobi functor as in [Lemma 4.1, op.cit.], which (in our case) results in the regular representation on $\GL_2$.
\end{proof}
\begin{rmk}
For convenience, we explicate the $n=3$ case of the above proposition, since it is this mini-theta that appears in the Jacquet module for the minimal representation of $E_7$; see Theorem \ref{T:three_step}. In this case $\almostGSpin_6=\GL_4$, and the mini-theta correspondence can be restated neatly: 
\begin{itemize}
    \item If $\pi=\chi\nu^{1}\times \chi\nu^{-1}$, then $\Theta(\pi)$ is a representation of length two whose unique irreducible quotient is $\chi\circ\det_{\GL_4}$;
    \item otherwise, $\Theta(\pi)$ is the unique irreducible quotient of $\pi\nu^{1/2}\times \pi\nu^{-1/2}$. 
\end{itemize}
\end{rmk}

\end{prop}

\bigskip

\noindent\textbf{Acknowledgment} 
W.T.Gan is partially supported by the Tan Chin Tuan Centennial Professorship at NUS. G. Savin is partially supported by a Simons Foundation gift 946504. P. Baki\'{c} is supported in part by the Croatian Science Foundation grant HRZZ-IP-2022-10-4615.

\end{document}